\documentclass[10pt,leqno, english]{amsart}
\usepackage[colorlinks=true]{hyperref}
\usepackage{srcltx}
\usepackage[english]{babel}
\usepackage{amsmath, amsthm}
\numberwithin{equation}{section}
\oddsidemargin=40pt
\evensidemargin=40pt
\topmargin=30pt

\usepackage{amsfonts}
\usepackage{amssymb}
\usepackage{bbm}
\usepackage{MnSymbol}

\newtheorem{proposition}{Proposition}[section]
\newtheorem{theorem}[proposition]{Theorem}
\newtheorem{lemma}[proposition]{Lemma}

\newtheorem{conjecture}[proposition]{Conjecture}

\newtheorem{claim}[proposition]{Claim}
\newtheorem*{problem*}{Inverse Sieve Problem}

\theoremstyle{definition}
\newtheorem{definition}[proposition]{Definition}

\theoremstyle{remark}
\newtheorem{remark}[proposition]{Remark}

\newtheorem*{remark*}{Remark}
\newtheorem*{remarks*}{Remarks}

\begin{document}

\title[The inverse sieve problem in global fields]{The inverse sieve problem for algebraic varieties over global fields}

\author[J. M. Menconi, M. Paredes, R. Sasyk]{Juan Manuel Menconi $^{1,2}$, Marcelo Paredes $^{2}$ \MakeLowercase{and} Rom\'an Sasyk $^{1,2}$}

\address{$^{1}$Instituto Argentino de Matem\'aticas Alberto P. Calder\'on-CONICET,
Saavedra 15, Piso 3 (1083), Buenos Aires, Argentina;}

\address{$^{2}$Departamento de Matem\'atica, Facultad de Ciencias Exactas y Naturales, Universidad de Buenos Aires, Argentina.}

\email{\textcolor[rgb]{0.00,0.00,0.84}{jmenconi@dm.uba.ar}}
\email{\textcolor[rgb]{0.00,0.00,0.84}{mparedes@dm.uba.ar}}
\email{\textcolor[rgb]{0.00,0.00,0.84}{rsasyk@dm.uba.ar}}

\subjclass[14Q20]{11G35, 11G50, 11N35, 11N69}

\keywords{Inverse problems, larger sieve over global fields, heights in global fields, varieties over global fields, effective Noether's normalization over global fields}

\begin{abstract}	
Let $K$ be a global field and let $Z$ be a geometrically irreducible algebraic variety defined over $K$. We show that if a big set $S\subseteq Z$ of rational points of bounded height occupies few residue classes modulo $\mathfrak{p}$ for many prime ideals $\mathfrak{p}$, then a positive proportion of $S$ must lie in the zero set of a polynomial of low degree that does not vanish at $Z$. 
This generalizes a result of Walsh who studied the case when $S\subseteq \{0,\ldots ,N\}^{d}$.
\end{abstract}

\maketitle

\section{Introduction}

Let $S$ be a random set of integers. In arithmetic combinatorics, it is usual to establish ``inverse theorems'', in the sense that if $S$ posses some specific arithmetic property, then $S$ belongs to a certain family of subsets of integers; hence providing a classification for such kind of $S$. For the purpose of this article, the arithmetic property in question to be studied is the equidistribution of the set $S$. Here, by an equidistributed subset of integers $S$ we mean that $S$ is well-distributed modulo $p$ for many primes $p$ (note that this is weaker than being well-distributed modulo $m$ for many moduli $m$).  We expect that a random set $S$ is fairly well-distributed. Thus, an ``inverse problem'' here would be to understand whether a set that occupies few residue classes modulo $p$ for many primes $p$ has some specific structure. In this generality, this has been stated as follows.

\begin{problem*}[see \cite{Croot, Helfgott}]
Suppose that a set $S\subseteq \{0,\ldots ,N\}^{d}$ occupies very few residue classes modulo $p$ for many primes $p$. Then, either $S$ is small, or it possesses some strong algebraic structure.
\end{problem*}

In order to give a concrete example that motivated this sort of problem, consider a subset $S\subseteq \{0,\ldots ,N\}$ satisfying that $S_{p}:=\{x\ (\text{mod}\ p):x\in S\}$ has at most $\alpha p$ elements for many primes in the interval $[1,N]$, with $0<\alpha<1$. Gallagher's sieve (Theorem 1 in \cite{Gallagher}) implies that $|S|\leq c(\alpha)N^{\alpha}$. Let us further suppose that $|S_{p}|\leq \frac{p-1}{2}$ for all $p\leq N^{\frac{1}{2}}$. The large sieve implies (see \cite{Mo})
\begin{equation}
|S|\leq CN^{\frac{1}{2}},
\label{large sieve}
\end{equation}
where $C$ is an absolute constant. The bound \eqref{large sieve} is essentially sharp, since if we consider $S$ to be the set of the squares lying in $\{0,\ldots ,N\}$, we see that $S$ occupies at most $\frac{1}{2}(p-1)$ residue classes for all primes $p$ and $|S|\sim N^{\frac{1}{2}}$. More generally, if $S$ is the image of a quadratic polynomial  $aX^{2}+bX+c\in \mathbb{Z}[X]$, we also have that $S$ occupies at most $\frac{p-1}{2}$ residue classes for all primes $p$ not dividing $a$ and $|S|\sim N^{\frac{1}{2}}$. Thus, we may ask if there are any other examples for which the bound \eqref{large sieve} is almost optimal. This discussion on the large sieve together with the example of the squares, led Helfgott and Venkatesh \cite{Helfgott} and independently Croot and Elsholtz \cite{Croot} to conjecture that any badly distributed set $S$ of size close to $N^{\frac{1}{2}}$ should ``essentially'' be the image of a quadratic polynomial. More precisely, they posed the following conjecture.

\begin{conjecture}[Problem 7.4 in \cite{Croot}, Guess in \cite{Helfgott}]
Let $S\subseteq \{0,\ldots ,N\}$ of size $|S|\geq N^{\varepsilon}$ occupying less than $\alpha p$ residue classes for some $0<\alpha<1$ and every prime $p$. Then all but $O(N^{o(1)})$ elements of $S$ are contained in the set of values of a polynomial $f\in \mathbb{Z}[X]$ with coefficients and degree bounded in terms of $\alpha$ and $\varepsilon$.
\label{inverse sieve0}
\end{conjecture}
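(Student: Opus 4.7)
The plan is to try to deduce this one-dimensional conjecture by iterating an inverse-sieve theorem of the type proved in this paper (applied to the ambient variety $Z = \mathbb{A}^{1}$ over $K = \mathbb{Q}$), peeling off layers of $S$ that lie on zero sets of low-degree polynomials.

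First I would invoke the paper's main theorem with $K = \mathbb{Q}$ and $Z = \mathbb{A}^{1}$. The hypotheses that $|S_p| < \alpha p$ for all primes $p$ and that $|S| \geq N^{\varepsilon}$ should feed in to produce a polynomial $f_1 \in \mathbb{Z}[X]$ of degree bounded by some $D_0 = D_0(\alpha, \varepsilon)$ and a constant $c = c(\alpha, \varepsilon) > 0$ such that a positive proportion $|S \cap V(f_1)| \geq c\, |S|$ of the set is captured by the zero locus of $f_1$.

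Next I would iterate. Setting $S_1 := S \setminus V(f_1)$, the residue-class hypothesis $|(S_1)_p| \leq |S_p| < \alpha p$ is automatically inherited, so the same theorem applies to $S_1$ as long as $|S_1| \geq N^{\varepsilon}$, producing $f_2$, and so on. After $k$ iterations the leftover set has size at most $(1-c)^k |S| \leq (1-c)^k N$, so choosing $k$ proportional to $\log N$ forces the leftover to be of size $N^{o(1)}$. The product $f := f_1 f_2 \cdots f_k$ then vanishes on all but $N^{o(1)}$ elements of $S$, as required.

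The main obstacle, and the reason I expect this naive plan to fall short of the full conjecture, is that the degree $\deg f = \sum_i \deg f_i$ coming from this iteration is only $O(\log N)$, not bounded purely in terms of $\alpha$ and $\varepsilon$. Closing this gap seems to require one of two deeper ingredients: either a \emph{rigidity} statement asserting that the near-extremal configurations of the larger sieve are so structured that a single application of the main theorem already captures all but $N^{o(1)}$ of $S$; or a way to consolidate the $f_i$ into a single polynomial of bounded degree by exploiting algebraic relations among them forced by the common residue data of $S$ (for instance, forcing each $f_i$ to be a factor of a universal bounded-degree polynomial attached to $S$). Both ingredients appear to be beyond current technology, which is consistent with the conjecture remaining open; the paper's contribution is precisely the ``positive proportion'' version of the statement, established in the much broader setting of arbitrary geometrically irreducible varieties over global fields.
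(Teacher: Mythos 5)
The statement you were asked to address is Conjecture~\ref{inverse sieve0}, which is an \emph{open conjecture} that the paper does not prove and does not claim to prove. It is quoted from \cite{Croot} and \cite{Helfgott} as motivation, and the paper even notes (citing \cite{Helfgott}) that it would imply $O(N^{\varepsilon})$ bounds for integral points on irrational curves, which is considered far out of reach. So there is no ``paper proof'' to compare against; any proposal claiming to derive the conjecture from the paper's results would have to be in error.

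You do correctly sense at the end that the plan falls short, but the obstacles are more basic than the degree-accumulation issue you identify. First, the conjecture concludes that $S$ is almost contained in the set of \emph{values} (the image) of a low-complexity polynomial $f\in\mathbb{Z}[X]$, as in the motivating example $S=\{n^2: n\le N^{1/2}\}$, which is the image of $f(X)=X^2$. This is not the zero set $V(f)$. In one variable the zero set of a fixed-degree polynomial has at most $\deg f$ elements, so the clause ``$f$ vanishes at a positive proportion of $S$'' with $|S|\ge N^{\varepsilon}$ is vacuously impossible for bounded $\deg f$; the iteration $S_{1}=S\setminus V(f_{1})$ would never make progress. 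Second, the paper's theorems (Theorems~\ref{basis1}, \ref{basis2}, \ref{basis3}) all require $0\le k<d$ (or $k<d-1$ in the projective case). For $d=1$ the only admissible value is $k=0$, which corresponds to $S$ occupying $O(1)$ residue classes per prime, not $\alpha p$ of them. The conjecture's hypothesis $|S_{p}|<\alpha p$ corresponds to $k=d=1$, precisely the boundary case the theorems exclude (and whose difficulty is discussed at the end of Section~\ref{section 4}). So the theorems you want to invoke do not apply, and the correct high-dimensional analogue that makes the conjecture plausible is obtained by encoding the image condition via the curve $\{(x,y): x=f(y)\}$ in $\mathbb{A}^{2}$, which is the setting of Theorem~\ref{Helf}, not of a one-variable application.
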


We remark that  Conjecture \ref{inverse sieve0} is known as the inverse problem for the large sieve (see Conjecture 1.4 in \cite{Green}). The name is due to the fact that Conjecture \ref{inverse sieve0} classifies all sets of integers $S$ that are obtained after sieving $\frac{p-1}{2}$ residue classes modulo $p$ over the primes $p\leq N^{\frac{1}{2}}$, and that have size close to $N^{\frac{1}{2}}$. In Section 4.2 of \cite{Helfgott} Helfgott and Venkatesh remarked that Conjecture \ref{inverse sieve0} implies that there are $O(N^{\varepsilon})$ points on an irrational curve, which is considered a very hard problem.  In the same article \cite{Helfgott}, the authors proved the following ``higher dimensional'' variant of Conjecture \ref{inverse sieve0}.

\begin{theorem}[Theorem 1.1 in {\cite{Helfgott}}]
Let $\varepsilon,\alpha>0$. There exist constants $c_{1}=c_{1}(\alpha,\varepsilon)$ and $c_{2}=c_{2}(\alpha,\varepsilon)$ such that the following holds. Let $S\subseteq \{0,\ldots ,N\}^{2}$ be a subset such that the number of residues $\{(x,y)\ (\text{mod}\ p):(x,y)\in S\}$ is at most $\alpha p$ for every prime $p$. Then, at least one of the following holds:
\begin{itemize}
\item $|S|\leq c_{1}N^{\varepsilon}$, or
\item there exists a non-zero polynomial of degree $c_{2}$ in $\mathbb{Z}[X,Y]$, vanishising in at least $(1-\varepsilon)|S|$ points of $S$.
\label{Helf} 
\end{itemize}
\end{theorem}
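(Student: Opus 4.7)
The plan is to prove the contrapositive via a combination of polynomial interpolation and Gallagher's larger sieve. Assume $|S| > c_{1} N^{\varepsilon}$; the goal is to produce the required polynomial of bounded degree. Fix $D = D(\alpha,\varepsilon)$ (to be optimised at the end) and let $k = \binom{D+2}{2}$ denote the dimension of the space of polynomials of degree $\leq D$ in $\mathbb{Z}[X,Y]$. For any $(k-1)$-element subset $T \subseteq S$, linear algebra yields a non-zero polynomial $f_{T}$ of degree $\leq D$ vanishing on $T$; by Cramer's rule applied to the interpolation system, the coefficients of $f_{T}$ can be taken to be integers bounded by a polynomial in $N$ of degree $O(D^{2})$. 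The objective is to show that for a suitable choice of $T$, the polynomial $f_{T}$ actually vanishes on at least $(1-\varepsilon)|S|$ points of $S$.

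The core step is a mod-$p$ analysis. For each prime $p$, the reduction $\overline{f}_{T}\in\mathbb{F}_{p}[X,Y]$ is either identically zero (a strong algebraic constraint on the coefficients of $f_{T}$) or it defines a curve containing at most $Dp$ points of $\mathbb{F}_{p}^{2}$ by Schwartz--Zippel/B\'ezout. On the other hand, $|S_{p}| \leq \alpha p$ by hypothesis. Applying Gallagher's larger sieve to the residual set $S\setminus V(f_{T})$, using these mod-$p$ bounds, and averaging over the random choice of $T\subseteq S$, one shows that if no polynomial of degree $\leq D$ vanishes on $(1-\varepsilon)|S|$ points, then the sieve forces $|S|\ll_{\alpha,\varepsilon} N^{\varepsilon}$, contradicting the assumption.

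The main obstacle is extracting a \emph{single} polynomial of degree $c_{2} = c_{2}(\alpha,\varepsilon)$ independent of $N$, rather than a product of many low-degree factors whose total degree might grow with $N$ or with the iteration count. A natural greedy procedure - peel off a low-degree polynomial vanishing on a positive proportion of $S$, remove its zero set, repeat - must be terminated after a bounded number of steps. Achieving this requires exploiting $|S_{p}| \leq \alpha p$ at every iteration so that the residual set still has small mod-$p$ images, driving the sieve to shrink $S$ at a geometric rate; one then consolidates the extracted factors into a single polynomial of controlled degree. Striking the balance between the interpolation degree $D$, the sample size $k$, the coefficient-height bound, and the target exponent $\varepsilon$ is where the explicit dependence of $c_{1}$ and $c_{2}$ on $\alpha$ and $\varepsilon$ must be carefully tracked, and this is the technical heart of the argument.
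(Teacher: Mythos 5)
The paper does not give a proof of this statement; it is cited from Helfgott--Venkatesh and described as an adaptation of the Bombieri--Pila determinant method into a ``two dimensional version of Gallagher's sieve.'' So I compare your proposal to that known argument, not to a proof in the paper.

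Your proposal has a genuine gap at the sieving step. Gallagher's larger sieve, applied to \emph{any} set $X\subseteq\{0,\dots,N\}^2$ with $|X_p|\leq\alpha p$ for primes $p\leq Q$, cannot give better than $|X|\lesssim_{\delta} N^{\alpha+\delta}$; indeed for this the denominator $\sum_{p\le Q}\frac{\log p}{\alpha p}-\log N\approx \frac{1}{\alpha}\log Q-\log N$ must be positive, forcing $Q>N^{\alpha}$, and the resulting bound is $|X|\lesssim Q$. Your residual set $S\setminus V(f_T)$ is a subset of $S$, so trivially $|(S\setminus V(f_T))_p|\leq |S_p|\leq \alpha p$ and nothing better: deleting the points on a curve does \emph{not} reduce the number of occupied residue classes modulo $p$, since the deleted points need not monopolize any class. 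Consequently the sieve applied to $S\setminus V(f_T)$ returns exactly the same $N^{\alpha+o(1)}$ bound as on $S$ itself, which is useless when $\varepsilon<\alpha$ — and the theorem must hold for all $\alpha>0$, including $\alpha$ close to $1$. The B\'ezout count $|V(\overline{f}_T)(\mathbb F_p)|\leq Dp$ is never plugged into anything that exploits it, and the averaging over $T$ doesn't repair this: you need a mechanism that converts ``$|S_p|$ is small'' into a bound that improves as $D$ grows, which sieving a residual set does not do.

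The actual mechanism in Helfgott--Venkatesh is to pick $k\approx\binom{D+2}{2}$ points of $S$, form the $k\times k$ determinant $\Delta$ of the monomial evaluation matrix, and run the sieve \emph{on the $p$-adic valuation of $\Delta$}: since only $\alpha p$ residue classes are occupied, many of the $k$ sampled points must collide mod $p$, forcing $v_p(\Delta)$ to be large (a computation in the spirit of Bombieri--Pila, where near-coincident points produce high-order vanishing of Vandermonde-type determinants). Multiplying $p^{v_p(\Delta)}$ over $p\leq Q$ and comparing with the Archimedean bound $|\Delta|\leq k!\,N^{Dk}$ forces $\Delta=0$ once $D$ is large in terms of $\alpha$ and $\varepsilon$; $\Delta=0$ means the $k$ sampled points lie on a degree-$\leq D$ curve, and a separate argument upgrades ``most $k$-tuples lie on some low-degree curve'' to ``one curve contains $(1-\varepsilon)|S|$.'' This $p$-adic-valuation-of-determinant step is the heart of the proof, and your proposal replaces it with a step that provably cannot work. (A smaller issue: with $k-1$ entries of size up to $N^D$ and $k\approx D^2/2$, Cramer's rule gives coefficients of size $N^{O(D^3)}$, not $N^{O(D^2)}$.) Your closing iterative-peeling idea inherits the same defect: the residual set's mod-$p$ image does not shrink, so there is no geometric decay to drive the iteration.
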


The argument of the proof of Theorem \ref{Helf} is based on the larger sieve of Gallagher. Specifically, they adapted the determinant method of Bombieri-Pila (see \cite{BP}) to give a two dimensional version of Gallagher's sieve. Furthermore their methods gave another proof of the estimates of Bombieri-Pila for plane curves (see Theorem 5 in \cite{BP}). While the proof of Theorem \ref{Helf} is closely linked to the original proof in \cite{BP}, it is remarkable that the method in \cite{Helfgott} uses ``local data'', i.e. the size of the residue classes of the points on a curve, instead of analytic data as in \cite{BP}, which is useful in other contexts (see, \cite{Sedunova}, where an analogue of Theorem 5 in \cite{BP} is obtained for function fields of genus $0$).
 
Helfgott and Venkatesh conjectured that a similar result to Theorem \ref{Helf} should hold for subsets of integers lying in $\mathbb{Z}^{d}$ for $d\geq 3$. Their methods, however, seem to handle only the case when $S$ occupies very few residue classes, specifically at most $\alpha p$ residue classes for all primes $p$. Note that these sets are not what we would expect for a general badly distributed set, where the number of residues classes can be at most $O(p^{d-1})$. Using a subtle inductive argument, together with the larger sieve and the polynomial method, in \cite{Walsh2}, Walsh solved this conjecture by proving the following theorem.

\begin{theorem}[Theorem 1.1 in {\cite{Walsh2}}]
Let $0\leq k<d$ be integers and let $\varepsilon,\alpha,\eta>0$ be positive real numbers. Then, there exists a constant $C$ depending only on the above parameters, such that for any set $S\subseteq \{0,\ldots ,N\}^{d}$ occupying less than $\alpha p^{k}$ residue classes for every prime $p$, at least one of the following holds:
\begin{itemize}
\item ($S$ is small) $|S|\lesssim_{d,k,\varepsilon,\alpha} N^{k-1+\varepsilon}$;
\item ($S$ is strongly algebraic) there exists a non-zero polynomial of degree at most $C$ in $\mathbb{Z}[X_{1},\ldots ,X_{d}]$ with coefficients bounded by $N^{C}$ vanishing at more than $(1-\eta)|S|$ points of $S$.
\end{itemize}
\label{basis}
\end{theorem}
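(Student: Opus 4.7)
The plan is to induct on the parameter $k$, combining Gallagher's larger sieve with the polynomial method. The base case $k = 0$ is essentially trivial: the hypothesis $|S_p| < \alpha$ for every prime $p$ forces $|S_p| = 0$ for all sufficiently large primes once $\alpha < 1$, which bounds $|S|$ by a constant depending on $\alpha$ by a direct sieve argument.

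For the inductive step, fix $\varepsilon, \alpha, \eta > 0$, assume the theorem for $k - 1$, and argue by contradiction: suppose $|S| \geq C_1 N^{k-1+\varepsilon}$ and that no polynomial of degree at most $C$ with coefficients bounded by $N^C$ vanishes on more than $(1-\eta)|S|$ points of $S$. The heart of the argument is to extract a low-degree polynomial $P$ vanishing on a substantial portion of $S$. Fix $D = D(d,k,\alpha,\eta)$ large enough that $\binom{D+d}{d}$ exceeds $\alpha p^{k}$ for the primes $p$ of interest. Using a determinant-method variant of Bombieri--Pila type, one solves a linear system over $\mathbb{Z}$ whose rows are indexed by judiciously chosen subsets of $S$ and whose columns correspond to monomials of degree $\leq D$; this yields a polynomial $P \in \mathbb{Z}[X_1,\ldots,X_d]$ of degree $\leq D$ and height $\leq N^{O(D)}$ vanishing on a positive fraction of $S$.

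By the contradiction hypothesis, $|S \cap V(P)| < (1-\eta)|S|$, so both $S \cap V(P)$ and its complement $S \setminus V(P)$ are substantial. The set $S \cap V(P)$ lies on the hypersurface $V(P)$ of dimension $d-1$; via a Noether-type normalization over $\mathbb{Z}$ with coefficients controlled in $N$, one pushes it down to a subset of $\{0,\ldots,N'\}^{d-1}$ whose residue count modulo $p$ is at most $(\deg P)\cdot \alpha p^{k-1}$, the factor $\deg P$ absorbing the generic fiber size of the projection. This invokes the inductive hypothesis at level $k-1$ and produces a polynomial $Q$ capturing most of $S \cap V(P)$, which combined with $P$ yields a bounded-degree polynomial vanishing on most of $S$, contradicting the assumption. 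In parallel, the residual set $S \setminus V(P)$ carries a strengthened residue-class constraint, feeding into Gallagher's larger sieve to bound it directly.

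\emph{Main obstacle.} The principal difficulty is the quantitative bookkeeping: one must simultaneously control the degree of $P$, the coefficient height $N^{O(D)}$, and the depth of iteration, ensuring that none of these parameters blows up with $N$. In particular, the Noether-type projection must preserve the residue-class structure up to controlled fiber sizes, and the interplay between the polynomial method and the larger sieve bound must be calibrated so that the final estimate for $|S|$ is genuinely $N^{k-1+\varepsilon}$ rather than a larger power. Handling reducible or singular components of $V(P)$ adds further technical layers, typically resolved via generic linear changes of coordinates with small integer entries.
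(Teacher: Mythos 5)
Your plan has the right coarse ingredients (induction with $d-k$ essentially fixed, Gallagher's larger sieve, the polynomial method via a Siegel-type linear system), but it omits the key structural idea in Walsh's argument and contains steps that do not work as stated. The central difficulty is precisely the one you gloss over: a linear-algebra step only produces a polynomial $P$ vanishing at the $O(D^d)$ points you feed into the system, and when $|S|$ is large this is a vanishing fraction of $S$, not a positive one. To produce a polynomial vanishing on a positive proportion of $S$, one must first locate a small \emph{characteristic subset} $A\subseteq S$ (this is Definition~\ref{characteristic} of the paper, transplanted from Walsh) with the property that any polynomial of bounded degree and height vanishing on $A$ must also vanish on a fixed positive fraction $L\subseteq S$; only then does Siegel's lemma applied to the small set $A$ produce the polynomial you want. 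Constructing $A$ is the heart of the proof. It proceeds by a fiberwise induction in the ambient dimension $d$ with $h=d-k$ fixed, using the larger sieve to discard ``exceptional'' residue classes $a\pmod{p}$ so that, for most primes and non-exceptional $a$, the fiber $S(a,p)$ occupies only $O(p^{k-1})$ classes, and then gluing the characteristic sets of a controlled number of fibers.

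Two further steps in your plan are not valid as stated. First, you fix $D$ so that $\binom{D+d}{d}>\alpha p^k$, but the constraint is imposed for primes up to $Q=N^{\Theta(\varepsilon)}$, so $\alpha p^k$ grows with $N$ and no $D$ depending only on $(d,k,\alpha,\eta)$ can satisfy this. Second, projecting $S\cap V(P)$ down to $\{0,\ldots,N'\}^{d-1}$ via Noether normalization does not shrink the residue count from $\alpha p^k$ to $(\deg P)\cdot\alpha p^{k-1}$: a generically finite projection of degree $\deg P$ can change the number of occupied residue classes by at most a multiplicative factor of $\deg P$, never by a factor of $p$, so the claimed power saving has no source. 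In Walsh's argument (and the paper's global-field analogue, Lemma~\ref{lemma 3.4} and Proposition~\ref{proposition 2.2}), the reduction from exponent $k$ to $k-1$ does \emph{not} come from projecting a hypersurface intersection; it comes from passing to fibers over the first coordinate and exploiting that $|[S(a,p)]_p|<B_1 p^{k-1}$ outside a small exceptional set of classes $a$, with the larger sieve then showing that the exceptional first coordinates are few. Without this fiberwise mechanism and without the characteristic-set notion, your induction cannot close, and the residual set $S\setminus V(P)$ inherits no ``strengthened'' residue constraint from which to run the sieve directly.
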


Let us emphasize the important point that Theorem \ref{basis} means that there exist constants $c$, $C$ depending on the parameters $d,k,\varepsilon,\alpha,\eta$ such that for any set $S$ satisfying the hypothesis of the theorem, we have $|S|\leq cN^{k-1+\varepsilon}$ or there exists a non-zero polynomial $f\in \mathbb{Z}[X_{1},\ldots ,X_{d}]$ of degree at most $C$ and coefficients bounded by $N^{C}$ vanishing at more than $(1-\eta)|S|$ points of $S$.

In this article we are interested in investigating the Inverse Sieve Problem in the context of global fields. To that end, let $K$  be a global field and denote by $\mathcal{O}_{K}$ its ring of integers. A natural generalization to global fields of subsets  lying in $\{0,\ldots N\}^{d}$ is to consider the subsets $S\subseteq \{{\bf x}\in \mathcal{O}_{K}^{d}:H({\bf x})\leq N\}$ where $H$ is a height function, in the sense of diophantine geometry. If such a subset $S$ is small, or if it lies in the zero set of $\mathcal{O}_{K}$-points of an affine variety $Z\subseteq \mathbb{A}^{d}$ of dimension $l<d$ defined over $K$, then by classical bounds we have that for all primes $\mathfrak{p}$ in $K$, $Z(\mathcal{O}_{K}/\mathfrak{p})$ has at most $\lesssim_{Z}|\mathcal{O}_{K}/\mathfrak{p}|^{l}$ points. Thus, one may ask if a similar principle as in the Inverse Sieve Problem holds in the context of global fields. In this article we adapt the proof of Walsh to show that this is indeed the case. More precisely, we prove the following result.

\begin{theorem}
Let $0\leq k<d$ be integers and let $\varepsilon, \alpha,\eta>0$ be positive real numbers. Let $K$ be a global field of degree $d_{K}$. For a given $x\in K$ let $H(x)$ be the absolute multiplicative height of $x$. Then there exists a constant $C=C(d,k,\varepsilon,\alpha,\eta,K)$ such that for any set $S\subseteq \{x\in \mathcal{O}_{K}:H(x)\leq N\}^{d}$ occupying less than $\alpha|\mathcal{O}_{K}/\mathfrak{p}|^{k}$ residue classes for every prime $\mathfrak{p}$, at least one of the following holds:
\begin{itemize}
\item ($S$ is small) $|S|\lesssim_{d,k,\varepsilon,\alpha,K}N^{d_{K}(k-1)+\varepsilon}$;
\item ($S$ is strongly algebraic) there exists a non-zero polynomial of degree at most $C$ in $\mathcal{O}_{K}[X_{1},\ldots ,X_{d}]$ with coefficients of height bounded by $N^{C}$ vanishing at more than $(1-\eta)|S|$ points of $S$.
\end{itemize} 
\label{basis1}
\end{theorem}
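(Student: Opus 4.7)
The plan is to adapt Walsh's inductive argument from \cite{Walsh2} to the global field setting, replacing each of its ingredients by a version that operates over $\mathcal{O}_K$. Walsh's scheme proceeds by induction on $d$ and at each step combines three tools: a form of Gallagher's larger sieve that handles the ``small'' case directly, a polynomial-method construction that either yields a polynomial vanishing on $(1-\eta)|S|$ points or else traps a positive proportion of $S$ on a hypersurface, and a reduction step based on effective Noether normalization that turns the analysis on that hypersurface into an instance of the theorem in dimension $d-1$. In our setting the rational primes $p$ are replaced by prime ideals $\mathfrak{p}\subset \mathcal{O}_K$ of norm $N\mathfrak{p}=|\mathcal{O}_K/\mathfrak{p}|$, and sizes of elements are measured through the absolute multiplicative height $H$. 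The factor $d_K$ in the exponent $N^{d_K(k-1)+\varepsilon}$ is forced by the estimate $\#\{x\in \mathcal{O}_K: H(x)\leq N\}\asymp N^{d_K}$: after fixing an integral basis, a box of $K$-height $\leq N$ in a single coordinate behaves like a box of side roughly $N$ in $d_K$ integer variables, so a naive reduction to Walsh applied on $\mathbb{Z}^{d\cdot d_K}$ would lose a power of $N$ compared to the target bound, and one must instead sieve directly modulo $\mathfrak{p}$.

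First I would set up the larger sieve over $\mathcal{O}_K$. Using an integral basis one bounds $\#\{x\in \mathcal{O}_K^d: H(x)\leq N\}\leq c_K N^{d\cdot d_K}$, and runs Gallagher's second-moment argument over the primes $\mathfrak{p}$ with $N\mathfrak{p}\leq M$ for a parameter $M$ to be optimized, estimating the main term via the Landau prime ideal theorem. Balancing $M$ with the hypothesis $|S_\mathfrak{p}|\leq \alpha N\mathfrak{p}^k$ yields $|S|\lesssim_{d,k,\varepsilon,\alpha,K} N^{d_K(k-1)+\varepsilon}$ in the ``small'' case. For the polynomial method I would use Siegel's lemma over $\mathcal{O}_K$ (equivalently: Siegel's lemma over $\mathbb{Z}$ applied coordinatewise in an integral basis) to solve the interpolation problem of producing a non-zero polynomial $f\in \mathcal{O}_K[X_1,\ldots ,X_d]$ of controlled degree whose zero set contains many points of $S$ and whose coefficients have height polynomial in $N$.

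The hard part will be the third ingredient: an effective Noether normalization over $\mathcal{O}_K$ with quantitative control on both the degree and the heights of the polynomial change of variables employed. In the inductive step one must project the hypersurface $V(f)\subseteq \mathbb{A}^{d}_K$ onto $\mathbb{A}^{d-1}_K$ via a finite map whose fiber sizes are bounded and whose defining polynomials have controlled heights, so that the image of $S\cap V(f)$ still lies in a box of heights polynomial in $N$ and still inherits a residue-class hypothesis with parameters controlled in terms of the degree of the map. Over $\mathbb{Q}$ this is a classical geometric manipulation, but over $\mathcal{O}_K$ one must track precisely how heights transform under the change of variables, which is the content of the keyword \emph{effective Noether normalization over global fields} highlighted in the introduction. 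Once this is available, the induction on $d$ closes by playing the larger-sieve bound against the bound obtained inductively on $V(f)$, with all constants depending only on $d$, $k$, $\varepsilon$, $\alpha$, $\eta$ and $K$, thereby proving Theorem \ref{basis1}.
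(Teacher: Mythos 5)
Your outline misidentifies how Walsh's induction actually runs, and this changes the proof in an essential way. You describe the inductive step as: find a polynomial of controlled degree, either it vanishes on $(1-\eta)|S|$ points or it traps a positive proportion of $S$ on a hypersurface $V(f)$, then apply effective Noether normalization to $V(f)$ and recurse in dimension $d-1$. That is not how the paper (or Walsh's original argument) closes the induction, and as stated it leaves the central difficulty unaddressed. Siegel's lemma only produces a low-height, low-degree polynomial vanishing on a \emph{small} interpolation set $A\subseteq S$ of size $O(r^{d-1})$; since in the interesting regime $|S|$ is a power of $N$, there is nothing a priori forcing this polynomial to vanish at a positive proportion of $S$. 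The whole content of Walsh's Proposition 2.2 (Proposition~\ref{proposition 2.2} here) is precisely to construct a carefully chosen small ``$(r,\delta)$-characteristic'' subset $A$ with the property that any polynomial of bounded complexity vanishing on $A$ automatically vanishes on a $\delta$-fraction of $S$. You cannot simply interpolate an arbitrary small subset.

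The induction itself is also different from what you propose. The paper inducts on $d$ not by passing to a hypersurface and normalizing, but by projecting onto a coordinate $\pi_1$ and studying the \emph{fibers} $S_x\subseteq \mathcal{O}_K^{d-1}$. One first passes to a dense subset of $S$ whose fibers are large and badly distributed (Claims~\ref{claim1}--\ref{claim3}), applies the inductive hypothesis fiber-by-fiber to get characteristic subsets $A(x)$ of each fiber, and then selects $m = O(1)$ fibers and glues their characteristic subsets into a characteristic subset of $S$; the larger sieve over $\mathcal{O}_K$ (Lemma~\ref{lemma 3.1}) is used throughout to discard exceptional fibers and exceptional primes. Effective Noether normalization (Theorem~\ref{noether}) enters only once in the paper, and not in this induction: it is the initial reduction from the variety statements (Theorems~\ref{basis2},~\ref{basis3}) to the pure affine-space statement (Theorem~\ref{reduction2}). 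For Theorem~\ref{basis1} itself, which already lives in $\mathbb{A}^d$, the Noether step is vacuous. A Noether-based recursion along the lines you suggest would also run into a parameter mismatch: a finite projection $V(f)\to\mathbb{A}^{d-1}$ preserves the residue-class bound $\alpha\mathcal{N}_K(\mathfrak{p})^k$, but the theorem in dimension $d-1$ requires $k<d-1$, so the boundary case $k=d-1$ could not be handled this way. The paper avoids this by staying in $\mathbb{A}^d$ and only later explaining (after Theorem~\ref{theorem 2.4 prima}) how switching from homogeneous to inhomogeneous polynomials, which have $\binom{r+d}{d}\sim r^d$ rather than $\sim r^{d-1}$ coefficients, lets the Siegel step close even when $h=d-k=1$.

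Your remarks on the sieve side are fine: the bound $|S|\lesssim N^{d_K(k-1)+\varepsilon}$ in the small case does come from Gallagher's argument run over prime ideals with Landau/RH input (equations~\eqref{landau1}--\eqref{landau2} and Lemmas~\ref{lemma 3.1},~\ref{lemma 3.2}), and Siegel's lemma over $\mathcal{O}_K$ (Lemma~\ref{siegel}) is indeed the interpolation tool. But the heart of the proof, the fiber-wise construction and gluing of characteristic subsets, is missing from your plan, and the role you assign to Noether normalization is not where it actually sits in the argument.
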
 
The reason why the exponent $d_{K}$ appears in the first case in Theorem \ref{basis1} is because we are counting $\mathcal{O}_{K}$-points with $K$ a possibly non-trivial extension of $\mathbb{Q}$ and one expects to have a $d_{K}$-power of the usual quantities. For instance, the line $x=y$ has $\sim_{K,\varepsilon}N^{d_{K}+\varepsilon}$ $\mathcal{O}_{K}$-points of absolute height at most $N$. 

In some situations, it is possible to have additional information about $S$, for instance, that $S$ already lies in an affine variety $Z$ defined over $K$, say geometrically irreducible. In this case, the statement of Theorem \ref{basis1} is trivial, since its second condition already holds. However, it may happen that $S$ occupies even fewer residue classes than $Z$. In this case, we can prove a sharper result than Theorem \ref{basis1}, as it can be seen in the next theorem.

\begin{theorem}
Let $0\leq k<d$ be integers, $D,M>0$ a positive integer, and let $\varepsilon,\alpha,\eta>0$ be positive real numbers. Let $K$ be a global field of degree $d_{K}$. For a given $x\in K$ let $H(x)$ be the absolute multiplicative height of $x$. Then there exists a constant $C=C(d,k,\varepsilon,\alpha,\eta,K,D,M)$  such that for any set $S\subseteq \{x\in \mathcal{O}_{K}:H(x)\leq N\}^{M}$  occupying less than $\alpha |\mathcal{O}_{K}/\mathfrak{p}|^{k}$ residue classes  for every prime $\mathfrak{p}$, and that lies in an affine variety $Z\subseteq \mathbb{A}^{M}$ defined over $K$, geometrically irreducible of dimension $d$ and degree $D$, at least one of the following holds:
\begin{itemize}
\item ($S$ is small) $|S|\lesssim_{d,k,\varepsilon,\alpha,K,D,M}N^{d_{K}(k-1)+\varepsilon}$;
\item ($S$ is strongly algebraic) there exists a polynomial  of degree at most $C$ and coefficients of height bounded by $N^C$
 in $\mathcal{O}_{K}[X_{1},\ldots ,X_{M}]$ vanishing at more than $(1-\eta)|S|$ points of $S$, that does not vanish at $Z$. 
\label{basis2}
\end{itemize}
\end{theorem}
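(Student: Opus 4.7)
The plan is to reduce Theorem \ref{basis2} to Theorem \ref{basis1} via an effective Noether normalization of $Z$ over $\mathcal{O}_{K}$. Since $Z\subseteq \mathbb{A}^{M}$ is geometrically irreducible of dimension $d$ and degree $D$, one expects to produce linear forms $L_{1},\ldots ,L_{d}\in \mathcal{O}_{K}[X_{1},\ldots ,X_{M}]$ whose coefficient heights depend only on $K$, $D$ and $M$, such that the projection $\pi\colon Z\to \mathbb{A}^{d}$ given by $\pi({\bf x})=(L_{1}({\bf x}),\ldots ,L_{d}({\bf x}))$ is finite and surjective with fibres of cardinality at most $D$. A key consequence of surjectivity is that any polynomial $g\in K[Y_{1},\ldots ,Y_{d}]$ satisfies $g\circ \pi\equiv 0$ on $Z$ if and only if $g\equiv 0$.

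Let $S':=\pi(S)\subseteq \mathcal{O}_{K}^{d}$. The boundedness of the coefficients of the $L_{i}$ gives $H(\pi({\bf x}))\lesssim_{K,D,M} N$ for every ${\bf x}\in S$, so $S'$ is contained in $\{y\in \mathcal{O}_{K}:H(y)\leq cN\}^{d}$ for some $c=c(K,D,M)$. Because $\pi$ is polynomial with coefficients in $\mathcal{O}_{K}$, reduction modulo a prime $\mathfrak{p}$ commutes with $\pi$, so the number of residue classes of $S'$ modulo $\mathfrak{p}$ is at most that of $S$, which is less than $\alpha|\mathcal{O}_{K}/\mathfrak{p}|^{k}$. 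Finally, the fibre bound yields $|S|\leq D|S'|$.

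Applying Theorem \ref{basis1} to $S'$ with parameters $(d,k,\varepsilon,\alpha,\eta/D)$ and the rescaled height bound $cN$ (absorbing $c$ into the implicit constants), one obtains two possibilities. In the first case $|S|\leq D|S'|\lesssim_{d,k,\varepsilon,\alpha,K,D,M} N^{d_{K}(k-1)+\varepsilon}$, which yields the first alternative of Theorem \ref{basis2}. In the second case, there exists a non-zero $g\in \mathcal{O}_{K}[Y_{1},\ldots ,Y_{d}]$ of degree at most $C'$ and coefficient heights at most $N^{C'}$ vanishing at more than $(1-\eta/D)|S'|$ points of $S'$. Set $f({\bf x}):=g(L_{1}({\bf x}),\ldots ,L_{d}({\bf x}))\in \mathcal{O}_{K}[X_{1},\ldots ,X_{M}]$; since the $L_{i}$ have $N$-independent coefficient heights, $f$ has degree at most $C'$ and coefficient heights at most $N^{C}$ for some $C=C(d,k,\varepsilon,\alpha,\eta,K,D,M)$. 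Each ${\bf x}\in S$ at which $f$ does not vanish projects to a point of $S'$ at which $g$ does not vanish, so the fibre bound ensures that the number of such ${\bf x}$ is at most $D\cdot (\eta/D)|S'|\leq \eta|S|$; hence $f$ vanishes at more than $(1-\eta)|S|$ points of $S$. Finally, $f$ cannot vanish on $Z$, since by surjectivity of $\pi$ this would force $g\equiv 0$, contradicting $g\neq 0$.

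The main obstacle is the effective Noether normalization itself: one must construct the linear forms $L_{i}$ with explicit height bounds on their coefficients, depending only on the arithmetic invariants $K$, $D$, $M$, and independent of $N$, while keeping the induced projection finite and surjective at the level of $\mathcal{O}_{K}$-schemes. The keywords of the paper indicate that such a result is developed earlier in the article, so it may be invoked here as a black box; the remainder of the argument is a clean pullback from the affine space case already handled by Theorem \ref{basis1}.
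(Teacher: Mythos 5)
Your proposal is correct and follows essentially the same route as the paper. The paper deduces Theorem~\ref{basis2} from the non-homogeneous affine result, Theorem~\ref{theorem 2.4 prima}, via precisely the Noether-normalization reduction you describe: project $S$ along bounded linear forms $L_1,\ldots,L_d$ (produced by Theorem~\ref{noether}, which also supplies the fibre bound $\leq D$), observe that the image occupies no more residue classes, apply the $\mathbb{A}^d$ case to $\pi(S)$, and pull the polynomial back by composition, noting that surjectivity of $\pi$ ensures the pullback does not vanish on $Z$. Your formulation routes the argument through Theorem~\ref{basis1}, which is the $Z=\mathbb{A}^d$, $D=1$ specialization; this is a harmless repackaging of the same ingredients. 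Your bookkeeping with $\eta/D$ and the fibre bound is correct and in fact slightly cleaner than the paper's, which applies the affine theorem with $\eta=\tfrac12$ and invokes a separate partition argument to recover an arbitrary $\eta$; both give the stated conclusion.
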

As in \cite{Walsh2}, it can be shown that Theorem \ref{basis2} is sharp, and that $\varepsilon$ can not be taken to be equal to zero. 

In diophantine applications, one is usually interested in the rational points of some algebraic variety. Thus, one may ask if Theorem \ref{basis2} admits a ``projective version''. Here we also prove such version. Furthermore, as in \cite{Walsh2}, we only require that our sets occupy few residue classes for a ``dense'' subset of small primes. All this is summarized in the following theorem.

\begin{theorem}
Let $0\leq k<d-1$ be integers, $D,M>0$ positive integers, and let $\varepsilon,\alpha,\kappa,\eta>0$ be positive real numbers. Let $K$ be a global field of degree $d_{K}$ and $\mathcal{O}_{K}$ be its ring of integers. Set $Q=N^{\frac{\varepsilon}{2(d+1)}}$ and let 
$$P\subseteq\mathcal{P}(Q):=\left \{\mathfrak{p}\text{ prime of }\mathcal{O}_{K}:\left|\mathcal{O}_{K}/\mathfrak{p}\right|\leq Q\right \}$$
be a subset of primes satisfying 
$$w(P):=\sum_{\mathfrak{p}\in P}\dfrac{\log(\left|\mathcal{O}_{K}/\mathfrak{p}\right|)}{\left|\mathcal{O}_{K}/\mathfrak{p}\right|}\geq \kappa w(\mathcal{P}(Q)).$$
For a given ${\bf x}\in \mathbb{P}^{M}(K)$ let $H({\bf x})$ be its absolute multiplicative height. Then there exists a constant $C=C(d,k,\varepsilon,\alpha,\eta,K,D,M)$  such that for any set $S\subseteq \{{\bf x}\in \mathbb{P}^{M}(K):H({\bf x})\leq N\}$ occupying less than $\alpha |\mathcal{O}_{K}/\mathfrak{p}|^{k}$ residue classes for every prime $\mathfrak{p}$ (i.e. the image of $S$ in $\mathbb{P}^{M}(\mathcal{O}_{K}/\mathfrak{p})$ has at most $\alpha |\mathcal{O}_{K}/\mathfrak{p}|^{k}$ elements), and that lies in a projective variety $Z\subseteq \mathbb{P}^{M}$ defined over $K$, geometrically irreducible of dimension $d$ and degree $D$, at least one of the following holds:
\begin{itemize}
\item ($S$ is small) $|S|\lesssim_{d,k,\varepsilon,\alpha,K,D,M}N^{d_{K}k+\varepsilon}$;
\item ($S$ is strongly algebraic) there exists an homogeneous polynomial of degree at most $C$ 
and coefficients of height bounded by $N^C$
in $\mathcal{O}_{K}[X_{0},\ldots ,X_{M}]$ vanishing at more than $(1-\eta)|S|$ points of $S$, that does not vanish at $Z$. 
\label{basis3}
\end{itemize}
\end{theorem}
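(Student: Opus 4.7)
My plan is to reduce Theorem~\ref{basis3} to the affine statement Theorem~\ref{basis2} by passing to the affine cone over $Z$. Two things go beyond a direct black-box application: the residue hypothesis holds only for primes in the sparse set $P$, and the polynomial that comes out must be homogeneous and must not vanish on $Z$.

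Let $\hat Z\subseteq\mathbb A^{M+1}$ be the affine cone over $Z$, geometrically irreducible of dimension $d+1$ and degree $D$. For each $[\mathbf x]\in S$ pick a primitive integral representative $\hat{\mathbf x}\in\mathcal O_K^{M+1}$, where ``primitive'' is taken relative to a fixed bounded system of representatives for the ideal class group of $\mathcal O_K$. The standard comparison between the projective height of $[\mathbf x]$ and the coordinate heights of any such primitive representative gives $H(\hat x_i)\le C_K N$, so that
\[
\hat S:=\{\hat{\mathbf x}:[\mathbf x]\in S\}\subseteq\hat Z(\mathcal O_K)\cap\{x\in\mathcal O_K:H(x)\le C_K N\}^{M+1},\qquad |\hat S|=|S|.
\]
Every residue class of $S$ in $\mathbb P^M(\mathcal O_K/\mathfrak p)$ lifts, via scalar multiples, to at most $|\mathcal O_K/\mathfrak p|$ affine residue classes, so $\hat S$ occupies at most $\alpha\,|\mathcal O_K/\mathfrak p|^{k+1}$ residues modulo each $\mathfrak p\in P$. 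The hypothesis $k<d-1$ yields $k+1<d<\dim\hat Z$, matching the dimension constraint of Theorem~\ref{basis2}.

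I would then rerun the proof of Theorem~\ref{basis2} on $\hat S\subseteq\hat Z$ with exponent $k+1$, replacing the full weighted sum over small primes in its Gallagher larger-sieve step by the weighted sum $w(P)$. The hypothesis $w(P)\ge\kappa\,w(\mathcal P(Q))$ together with the prime ideal theorem $w(\mathcal P(Q))\asymp\log Q$ imply that this only costs a constant factor depending on $\kappa$, leaving the rest of Walsh's inductive argument intact. In the small case one obtains $|\hat S|\lesssim (C_KN)^{d_K((k+1)-1)+\varepsilon'}\lesssim N^{d_Kk+\varepsilon}$, which is the claimed bound on $|S|=|\hat S|$. In the algebraic case one obtains $f\in\mathcal O_K[X_0,\dots,X_M]$ of degree at most $C$ and coefficient height at most $N^C$ vanishing on a $(1-\eta')$-fraction of $\hat S$ but not on $\hat Z$. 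Decomposing $f=\sum_j f_j$ into homogeneous components, the cone structure of $\hat Z$ forces at least one $f_j$ not to vanish on $\hat Z$, hence not on $Z$. To make this $f_j$ vanish on $(1-\eta)|S|$ points of $S$, I would enlarge $\hat S$ already in the reduction step so that it contains, for each $[\mathbf x]\in S$, more than $\deg f$ integer-scalar multiples $c\hat{\mathbf x}$ of controlled height (which does not alter the residue hypothesis, since scaling permutes the affine residues lying over a fixed projective class); a Vandermonde argument on $t\mapsto f(t\hat{\mathbf x})$ then forces every component $f_j(\hat{\mathbf x})$ to vanish whenever $f$ is killed at enough scalar multiples of $\hat{\mathbf x}$, and pigeonholing over $j\in\{0,\ldots,\deg f\}$ selects a single $f_j$ doing the job.

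The key obstacle will be the sieve adaptation: one must verify that Walsh's inductive interplay between Gallagher's larger sieve and the Bombieri--Pila-type determinant method goes through when the prime sum is restricted to the weighted subset $P$, while preserving the effective control on the degree and coefficient height of the polynomial in the algebraic alternative. The height bookkeeping in the cone reduction is routine (relying on Minkowski-type bounds to produce small representatives of each ideal class), and the homogenization maneuver is a standard polynomial-method device.
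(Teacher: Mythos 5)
Your reduction to the affine cone, the bound $|[\hat S]_{\mathfrak p}|\lesssim \mathcal N_K(\mathfrak p)^{k+1}$, and the use of a Minkowski-type bound to pick lifts of controlled height all coincide with the paper's argument (which formalizes the lift via Proposition~\ref{serre}). Where you genuinely diverge is in how the output polynomial is made homogeneous. The paper threads homogeneity through the whole chain: Theorem~\ref{theorem 2.4} restricts Siegel's lemma to monomials of degree exactly $r$ (a space of dimension $\sim r^{d-1}$ against a characteristic set of size $\sim r^{d-h}$, which is why $h>1$, i.e.\ $k<d-1$, is forced), and then Theorems~\ref{reduction2} and \ref{reduction1} transport the homogeneous polynomial back to the cone and down to $\mathbb P^M$. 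You instead invoke the inhomogeneous Theorem~\ref{basis2} (more precisely its sparse-prime variant, i.e.\ Theorem~\ref{theorem 2.4 prima} fed through the Noether reduction), and recover homogeneity a posteriori by seeding $\hat S$ with $L>C$ bounded scalar multiples of each lift and running a Vandermonde argument on $t\mapsto f(t\hat{\mathbf x})$ to kill every graded piece, then choosing a component not vanishing on $\hat Z$. This is a valid alternative: the enlarged set stays inside the same affine residue classes over $[S]_{\mathfrak p}$, the height only inflates by $O_{K,L}(1)$, and $L$ can be chosen after reading off the \emph{a priori} degree bound $C$ from the inhomogeneous theorem, so there is no circularity. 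One small imprecision: you write that $k<d-1$ is what ``matches the dimension constraint of Theorem~\ref{basis2}'', but applying that theorem to $\hat Z$ of dimension $d+1$ with exponent $k+1$ only needs $k+1<d+1$, i.e.\ $k<d$. The stricter bound $k<d-1$ is an artifact of the paper's homogeneous Siegel step, which your route bypasses; if carried out carefully your approach appears to reach the borderline case $k=d-1$, which the paper explicitly leaves open in its closing remarks of $\S$\ref{section 4} — worth flagging as a potential improvement rather than a needed hypothesis.
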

 It would be interesting to know if Theorem \ref{basis3} remains valid in the extreme case $k=d-1$. We defer a discussion about this case to the end of section  $\S$\ref{section 4}.

The proofs that we present here follow the general strategy developed by Walsh in \cite{Walsh2}. However, given the nature of the statements of our theorems, several new difficulties arise. First we need to adapt two kinds of estimates over $\mathbb{Z}$ to the corresponding estimates over global fields: those concerning the behavior of heights and those concerning the distribution of primes. To that end, in section $\S$\ref{section 2} we start by defining the height function that will be used throughout this article. While the theory of heights on number fields is very well documented, arguably this is not so in the case of function fields. Specifically, here we prove two statements for heights in function fields that we could not find in the literature. These are Proposition \ref{serre}, that states that points in $\mathbb{P}^{n}$ of height $\lesssim 1$ are lifted to points in $\mathbb{A}^{n+1}$ of height $\lesssim_{K} 1$, and Proposition \ref{S-integers bound}, which gives an upper bound for the number of points of bounded height in the group of $S$-units in a function field. Both results are well known for number fields (see, for instance, Section 13.4 in Chapter 13 of \cite{Serre} and Section 3 of \cite{Lang}, respectively). Concerning the distribution of primes, in section $\S$\ref{new section 3}, after recalling Landau Prime Ideal Theorem for number fields and the Riemann hypothesis over function fields, we extend the larger sieve of Gallagher, as it was presented in \cite{Walsh2}, to global fields. 

The second kind of difficulty is that we work with sets lying in algebraic varieties, and that the bounds in our theorems are uniform in the degree and dimension of such algebraic varieties. We overcome it in section $\S$\ref{section 3} in essentially two steps. First we use a standard argument  to reduce Theorem \ref{basis3} to a statement concerning  affine varieties. Then, we use Noether's normalization theorem to make a change of variables and reduce Theorem \ref{basis3} to a statement concerning badly distributed sets in an affine space. Because of the uniformity in our bounds, we need to have a nice control in the change of variables. Thus, we are led to prove in Theorem \ref{noether} an effective Noether's normalization theorem, which may be of interest in its own right. 

Finally, in section $\S$\ref{section 4} we prove Theorem \ref{basis3}. Unlike in the paper of Walsh \cite{Walsh2}, we follow the dependence of the parameters in the proofs, which brings the last technical difficulty of this article. We believe that having made explicit the dependence of the constants may be useful for some diophantine applications.

\section{Heights in global fields}
\label{section 2}

The purpose of this section is twofold. First we establish a normalization of the absolute values of a global field. We use this to define the height function that will be used in this article and recall some basic properties of it. Secondly, we state two propositions concerning estimates for the number of points in the group of $S$-units of a global field, which are well known for number fields, but for which we could not find a reference for function fields. The presentation in this section has been influenced by the standard references \cite{Bombieri, Hindry, Lang, Serre}.

\subsection{Absolute values and relative height}
 Throughout this paper, $K$ denotes a global field, i.e. a finite separable extension of $\mathbb Q$ or $\mathbb{F}_{q}(T)$, in  which case we further assume that the field of constants is $\mathbb{F}_{q}$. We will denote by $d_{K}$ the degree of the extension  $K/ \mathbbm{k}$,  where $\mathbbm{k}$ indistinctively denotes the base fields  $\mathbb{Q}$  or  $\mathbb{F}_{q}(T)$.

Let $K$ be a number field and $\mathcal{O}_{K}$ its ring of integers. Then each embedding $\sigma:K\hookrightarrow \mathbb{C}$ induces a place $v$, by means of the equation
$$||x||_{v}:=|\sigma(x)|^{\frac{n_{v}}{d_{K}}}_{\infty},$$
where $|\cdot |_{\infty}$ denotes the absolute value of $\mathbb{R}$ or $\mathbb{C}$ and $n_{v}=1$ or $2$ respectively. 
Such places will be called the places at infinity, and denoted by $M_{K,\infty}$. Note that $\sum_{v\in M_{K,\infty}}n_{v}=d_{K}$. They are all the archimedean places of $K$. Since the complex embeddings come in pairs that differ by complex conjugation, we have $|M_{K,\infty}|\leq d_{K}$. 

Now let $\mathfrak{p}$ be a non-zero prime ideal of the number field $K$, and denote by $\text{ord}_{\mathfrak{p}}$ the usual $\mathfrak{p}$-adic valuation. Associated to $\mathfrak{p}$, we have the place $v$ in $K$ given by the equation
$$||x||_{v}:=|x|_{\mathfrak{p}}:=\mathcal{N}_{K}(\mathfrak{p})^{-\frac{\text{ord}_{\mathfrak{p}}(x)}{d_{K}}},$$
where $\mathcal{N}_{K}(\mathfrak{p})$ denotes the cardinal of the finite quotient $\mathcal{O}_{K}/\mathfrak{p}$. We will also denote $\mathcal{O}_{\mathfrak{p}}$ for the localization at $\mathfrak{p}$ of the ring $\mathcal{O}_{K}$. Such places will be called the finite places, and denoted by $M_{K,\text{fin}}$. They are all the non-archimedean places of $K$. 
The set of places of $K$ is then the union $M_{K,\infty}\cup M_{K,\text{fin}}$, and we denote it by $M_{K}$. For any finite subset $S\subseteq M_{K}$ containing the infinite places $M_{K,\infty}$, we define the ring of $S$-integers of $K$ to be the set
$$\mathcal{O}_{K,S}:=\left\{ x\in K: ||x||_{v}\leq 1\text{ for all }v\in M_{K},v\notin S\right\}.$$
Observe that $\mathcal{O}_{K}=\mathcal{O}_{K,S}$ for $S=M_{K,\infty}$.
The norm of a non-zero ideal $I\subseteq \mathcal{O}_{K,S}$, denoted by $\mathcal{N}_{K,S}(I)$, is just the cardinal of the finite quotient $\mathcal{O}_{K,S}/I$. The prime ideals of $\mathcal{O}_{K,S}$ correspond to the prime ideals $\mathfrak{p}\mathcal{O}_{K,S}$ where $\mathfrak{p}$ is a prime ideal of $\mathcal{O}_{K}$ not lying in $S$.

Now, let us suppose that $K$ is a function field over $\mathbb{F}_{q}$, such that $\mathbb{F}_{q}$ is algebraically closed in $K$ (in other words, the constant field of $K$ is $\mathbb{F}_{q}$). A prime in $K$ is, by definition, a discrete valuation ring $\mathcal{O}$ with maximal ideal $\mathfrak{p}$ such that $\mathbb{F}_{q}\subseteq \mathfrak{p}$ and the quotient field of $\mathcal{O}$ equal to $K$. By abuse of notation, when we refer to a prime in $K$, we will refer to the maximal ideal $\mathfrak{p}$. We will also denote $\mathcal{O}_{\mathfrak{p}}$ to the corresponding discrete valuation ring. Associated to $\mathfrak{p}$, we have the usual $\mathfrak{p}$-adic valuation, that we will denote by $\text{ord}_{\mathfrak{p}}$. The degree of $\mathfrak{p}$, denoted by $\deg(\mathfrak{p})$ will be the dimension of $\mathcal{O}_{\mathfrak{p}}/\mathfrak{p}$ as an $\mathbb{F}_{q}$-vectorial space, which is finite. Then the norm of $\mathfrak{p}$ is defined as $\mathcal{N}_{K}(\mathfrak{p}):=q^{\deg(\mathfrak{p})}$. 
Any prime $\mathfrak{p}$ of $K$ induces a place $v$ in $K$ by the equation
$$||x||_{v}:=|x|_{\mathfrak{p}}:=\mathcal{N}_{K}(\mathfrak{p})^{-\frac{\text{ord}_{\mathfrak{p}}(x)}{d_{K}}}.$$
They are all the places in $K$. The set of all places in $K$ is denoted by $M_{K}$. As in the case of number fields, for any nonempty finite subset $S\subseteq M_{K}$, we define the ring of $S$-integers of $K$ to be the set
$$\mathcal{O}_{K,S}:=\left\{ x\in K: ||x||_{v}\leq 1\text{ for all }v\in M_{K},v\notin S\right\}.$$
Given $x\in \mathcal{O}_{K,S}$ we define $\mathcal{N}_{K,S}(x):=\prod_{\mathfrak{p}\notin S}\mathcal{N}_{K}(\mathfrak{p})^{\text{ord}_{\mathfrak{p}}(x)}$. By definition, $\text{ord}_{\mathfrak{p}}(x)\geq 0$ for all $\mathfrak{p}\notin S$, so that $\mathcal{N}_{K,S}(x)$ is a positive integer. 
A prime in $\mathcal{O}_{K,S}$ will be any prime $\mathfrak{p}\in K$ not in $S$. When $S=\{v\}$, we will usually denote $\mathcal{O}_{K,S}=\mathcal{O}_{K}$. If $w\in M_{\mathbbm{k}}$ is the place below $v$, we will denote $M_{K,\infty}:=\{v'\in M_{K}:v'|w\}$. Note that $|M_{K,\infty}|\leq d_{K}$. 

Now, given a global field $K$, we define the absolute multiplicative projective height of $K$ of a point ${\bf x}=(x_{0}:\ldots :x_{n})\in \mathbb{P}^{n}(K)$, to be the function
$$H({\bf x}):=\displaystyle \prod_{v\in M_{K}}\max_{i}\{||x_{i}||_{v}\},$$
and the relative multiplicative projective height by
$$H_{K}({\bf x}):=H({\bf x})^{d_{K}}.$$
If $x\in K$, $H_{K}(x)$ will always denote the projective height $H_{K}(1:x)$. The next inequalities follow immediately from the definition of the height
\begin{equation}
H_{K}(x\cdot y)\leq H_{K}(x)\cdot H_{K}(y),
\label{height1}
\end{equation}
\begin{equation}
H_{K}(x+y)\leq 2^{d_{K}}H_{K}(x)H_{K}(y).
\label{height2}
\end{equation}
Also, from the product formula it follows that for all $x\in K^{\ast}$,
\begin{equation}
H_{K}(x)=H_{K}(x^{-1}).
\label{height3}
\end{equation}
For our purposes, it will be necessary to understand how the affine height of a point behaves under the action of a polynomial. It is easy to show (see Proposition B.2.5. (a) in \cite{Hindry}) that if $P(T_{1},\ldots ,T_{n})=\sum_{(i_{1},\ldots ,i_{n})}c_{i_{1},\ldots ,i_{n}}T_{1}^{i_{1}}\cdots T_{n}^{i_{n}}$, ${\bf c}=(c_{i_{1},\ldots ,i_{n}})_{i_{1},\ldots ,i_{n}}$ and $R$ is the number of $(i_{1},\ldots, i_{n})$ with $c_{i_{1},\ldots ,i_{n}}\neq 0$, we have
\begin{equation}
H_{K}(P({\bf x}))\leq R^{d_{K}}H_{K}(1:{\bf c})H_{K}(1:{\bf x})^{\deg(P)}.
\label{polynomialheight2}
\end{equation}
Given a set of places $S$ and ${\bf x}=(x_{1},\ldots, x_{n})\in \mathcal{O}_{K,S}^{n}$, we have the bound
\begin{equation}
H_{K}(x_{1}:\ldots :x_{n})\leq H_{K}(1:x_{1}:\ldots :x_{n})\leq \max_{i}\{H_{K}(x_{i})\}^{|S|}.
\label{height1}
\end{equation}
Also, for any $x\in \mathcal{O}_{K,S}\backslash \{0\}$, it holds
\begin{equation}
\mathcal{N}_{K}(x)\leq H_{K}(x).
\label{norm}
\end{equation}

In section $\S$\ref{section 3} we will require to lift a bounded set in projective space to a set in affine space. The next proposition states that this can be done in a controlled manner.
\begin{proposition}\label{serre}
Let $K$ be a global field, let $S$ be a finite set of places, with the additional condition that $M_{K,\infty}\subseteq S$ if $K$ is a number field, and let $d\geq 1$ be an integer. There exists $c=c(K,S,d)$ such that for every ${\bf x}\in \mathbb{P}^{d}(K)$ there exists $(y_{0},\ldots ,y_{d})\in \mathcal{O}_{K,S}^{d+1}$ a lift of $\bf x$ such that
$$H_{K}(1:y_{0}:\ldots :y_{d})\leq c H_{K}(\bf{x}).$$
\end{proposition}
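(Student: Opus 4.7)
The plan is to produce the lift by finding a single scalar $\lambda\in K^{\ast}$ such that every $y_i:=\lambda x_i$ lies in $\mathcal{O}_{K,S}$, and to use the product formula together with two standard global inputs to control the resulting height: the finiteness of the $S$-class group $\mathrm{Cl}(\mathcal{O}_{K,S})$, and Dirichlet's $S$-unit theorem (both valid for global fields, under the hypothesis $M_{K,\infty}\subseteq S$ in the number field case and $S\neq\emptyset$ in the function field case). Once $y_i\in\mathcal{O}_{K,S}$, the contribution of every place $v\notin S$ to $H(1:y_0:\cdots:y_d)$ is trivial, so the whole problem reduces to controlling the finitely many places $v\in S$.

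For the class group step, let $\mathfrak{a}_{\mathbf{x}}$ be the fractional $\mathcal{O}_{K,S}$-ideal generated by the nonzero coordinates of $\mathbf{x}$, and fix once and for all integral representatives $\mathfrak{b}_1,\ldots,\mathfrak{b}_h$ of the finite group $\mathrm{Cl}(\mathcal{O}_{K,S})$. Choose $\lambda_0\in K^{\ast}$ with $\lambda_0\mathfrak{a}_{\mathbf{x}}=\mathfrak{b}_k$ for some $k$; then $\lambda_0 x_i\in \mathcal{O}_{K,S}$ for every $i$, and $\max_i\|\lambda_0 x_i\|_v\leq 1$ for $v\notin S$. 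A direct computation with the product formula, using that $\prod_{v\notin S}\max_i\|x_i\|_v=\mathcal{N}_{K,S}(\mathfrak{a}_{\mathbf{x}})^{-1/d_K}$, yields
\[
\prod_{v\in S}\|\lambda_0\|_v\,\max_i\|x_i\|_v \;=\; \mathcal{N}_{K,S}(\mathfrak{b}_k)^{1/d_K}\,H(\mathbf{x}) \;\leq\; C_1(K,S)\,H(\mathbf{x}).
\]
This bounds the geometric mean of the $S$-place contributions, but not yet each of them separately against $1$.

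For the unit step, apply Dirichlet's $S$-unit theorem: the logarithmic embedding $u\mapsto (\log\|u\|_v)_{v\in S}$ sends $\mathcal{O}_{K,S}^{\ast}$ onto a lattice $L$ of rank $|S|-1$ in the hyperplane $\{\sum_v t_v=0\}$, with a fixed fundamental domain of diameter $C_2=C_2(K,S)$. Replacing $\lambda_0$ by $u\lambda_0$ shifts the vector $(\log(\|\lambda_0\|_v\max_i\|x_i\|_v))_{v\in S}$ by an arbitrary element of $L$ without altering its coordinate sum, and so we may pick $u$ so that each coordinate lies within $C_2$ of the common mean $\tfrac{1}{|S|}\log(C_1 H(\mathbf{x}))$. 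Summing the positive parts gives
\[
\prod_{v\in S}\max\bigl(1,\|u\lambda_0\|_v\max_i\|x_i\|_v\bigr) \;\leq\; e^{|S|C_2}\max\bigl(1,C_1 H(\mathbf{x})\bigr) \;\leq\; c(K,S,d)\,H(\mathbf{x}),
\]
where the last step uses $H(\mathbf{x})\geq 1$. Taking $\lambda=u\lambda_0$ and $y_i=\lambda x_i$, the trivial bound at $v\notin S$ combined with this estimate at $v\in S$ gives $H(1:y_0:\cdots:y_d)\leq c(K,S,d)H(\mathbf{x})$; raising to the $d_K$-th power yields the desired inequality for $H_K$.

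The main obstacle is the $S$-unit balancing step. The finiteness of $\mathrm{Cl}(\mathcal{O}_{K,S})$ and the bound $C_1$ are straightforward, but to obtain a clean multiplicative bound on the height one must exploit the full-rank property of the $S$-unit lattice and quantitatively control the diameter $C_2$ of its fundamental domain. In the function field case this requires citing the Artin--Hasse form of the unit theorem (where the role of the archimedean places is played by any nonempty $S\subseteq M_K$); in the number field case it is the classical Dirichlet theorem. Carrying this through uniformly is precisely where the dependence of $c$ on $K$ and $S$ is produced.
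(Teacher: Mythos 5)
Your proof is correct and follows essentially the same route as the paper: use finiteness of the $S$-class group to scale to $S$-integral coordinates with a bounded ideal, then use the full rank of the log-unit lattice (Dirichlet/Artin--Hasse $S$-unit theorem) to balance the places in $S$. The paper packages the unit-balancing step as Lemma \ref{some units} (adjoining the generator $(1)_{v\in S}$ to the log-unit lattice to get a cocompact subgroup of $\mathbb{R}^{|S|}$ and using a bounded fundamental domain), which is exactly the computation you inline by shifting each coordinate to within $C_2$ of the common mean.
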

This is proved in Section 13.4 of \cite{Serre} when $K$ is a number field and $S=M_{K,\infty}$. For the sake of completeness, we include the proof of this more general statement in section $\S$\ref{appendix}.

In section $\S$\ref{section 4} we will need estimates of the numbers of points in $\mathcal{O}_{K}$  of a given height. This is addressed by the following proposition.
\begin{proposition}[$S$-integer points of bounded height]
Let $K$ be a global field, and let $S\subseteq M_{K}$ be a nonempty finite subset of places of $K$ (which we require that contains the infinite places when $K$ is a number field, and the place $v$ fixed to define $\mathcal{O}_{K}$ when $K$ is a function field). Then
$$\left|\left \{ x\in \mathcal{O}_{K,S}:H_{K}(x)\leq N\right \}\right| \leq c''(K) N(\log(N))^{|S|}.$$
\label{S-integers bound}
\end{proposition}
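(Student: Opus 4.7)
The plan is to count elements by decomposing each nonzero $x\in\mathcal{O}_{K,S}$ with $H_K(x)\leq N$ as a pair (principal ideal, generator of that ideal), and to estimate each factor separately. For such $x$, the product formula $\prod_{v\in M_K}||x||_v^{d_K}=1$ combined with $||x||_v\leq 1$ for $v\notin S$ yields
$$\mathcal{N}_{K,S}(x\mathcal{O}_{K,S})=\prod_{v\in S}||x||_v^{d_K}\leq\prod_{v\in S}\max(1,||x||_v)^{d_K}=H_K(x)\leq N,$$
so the principal ideal $I=x\mathcal{O}_{K,S}$ has norm at most $N$.

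First I would bound the number of integral ideals of $\mathcal{O}_{K,S}$ of norm at most $N$. Any such ideal extends to an integral ideal of $\mathcal{O}_K$ of the same norm whose prime factors lie outside $S$, so the count is at most the total number of integral ideals of $\mathcal{O}_K$ of norm at most $N$; by Landau's prime ideal theorem in the number field case, or its analogue via the Riemann hypothesis for curves in the function field case (both addressed in section \ref{new section 3}), this quantity is $O_K(N)$.

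Next, for each principal ideal $I$ that has at least one generator of height at most $N$, I would fix one such $x_0\in I$ and observe that every other generator is of the form $ux_0$ with $u\in\mathcal{O}_{K,S}^{*}$; the submultiplicativity of $H_K$ together with $H_K(x_0^{-1})=H_K(x_0)\leq N$ yields $H_K(u)\leq H_K(ux_0)H_K(x_0)\leq N^2$. It therefore suffices to count $u\in\mathcal{O}_{K,S}^{*}$ with $H_K(u)\leq N^2$. Since $||u||_v=1$ for $v\notin S$, the logarithmic map $u\mapsto(\log||u||_v)_{v\in S}$ sends $\mathcal{O}_{K,S}^{*}$ into the trace-zero hyperplane of $\mathbb{R}^{|S|}$, and by the $S$-unit theorem (Dirichlet for number fields, and its function field counterpart, valid because $S\neq\emptyset$) the image is a lattice of rank $|S|-1$ modulo finite torsion. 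The bound $H_K(u)\leq N^2$ confines this vector to a box of side $O(\log N)$, yielding $O_K((\log N)^{|S|-1})$ units.

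Multiplying the two bounds gives $O_K(N(\log N)^{|S|-1})$ nonzero elements, which is in fact slightly sharper than the stated bound. The principal obstacle will be collecting the correct forms of Landau's estimate and Dirichlet's $S$-unit theorem in the function field setting, since the excerpt notes that references there are less accessible; the height bookkeeping itself is straightforward once the key inequality $\mathcal{N}_{K,S}(x\mathcal{O}_{K,S})\leq H_K(x)$ is set up via the product formula as above.
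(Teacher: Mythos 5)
Your proof is correct, and it even yields the slightly sharper bound $O_K\bigl(N(\log N)^{|S|-1}\bigr)$. The key steps all check out: the identity $\mathcal{N}_{K,S}(x\mathcal{O}_{K,S})=\prod_{v\in S}\|x\|_v^{d_K}$ follows from the product formula exactly as you write it, the bound $H_K(u)\leq H_K(ux_0)H_K(x_0^{-1})=H_K(ux_0)H_K(x_0)\leq N^2$ is a direct application of \eqref{height1} and \eqref{height3}, and confining the image of $u$ under the logarithmic embedding to a box of side $O(\log N)$ inside the rank-$(|S|-1)$ $S$-unit lattice gives $O_K((\log N)^{|S|-1})$ units.

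Your route differs from the paper's in an instructive way. The paper's proof (given only for function fields; the number field case is delegated to \cite{Lang} and \cite{Barroero}) maps $x$ to the pair $\bigl(\sum_{v\notin S}\mathrm{ord}_v(x)v,\ \sum_{v\in S}\mathrm{ord}_v(x)v\bigr)$; it counts the first (effective, bounded-degree) component by Riemann--Roch, but counts the second component trivially as a box in $\mathbb{Z}^{|S|}$, which produces the extra log factor. In effect it treats the $S$-local data as free coordinates. Your decomposition into (ideal, $S$-unit) is the same splitting in a different guise — the first component of $\varphi$ is precisely the divisor of the ideal $x\mathcal{O}_{K,S}$ — but you exploit the Dirichlet--Hasse $S$-unit theorem to see that the ``$S$-part'' actually lives in a lattice of rank $|S|-1$, not $|S|$, which saves a factor of $\log N$. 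Your argument also has the advantage of treating number fields and function fields uniformly, while the paper has to argue the two cases separately. The only small loose end is that the ideal-counting estimate $O_K(N)$ is not literally equations \eqref{landau1}--\eqref{landau2} (those bound sums over primes); you need the ideal counting function itself, which in the number field case is a standard corollary of Landau's theorem (or just of the convergence of the Dedekind zeta function near $s=1$) and in the function field case is exactly the Riemann--Roch divisor count the paper carries out, so this is a presentational rather than a mathematical gap.
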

When $K$ is a number field, sharper estimates than Proposition \ref{S-integers bound} hold; for instance, see Theorem 5.2 in \cite{Lang} for the case $S=M_{K,\infty}$ and Theorem $1.1$ in \cite{Barroero} for arbitrary $S$. Moreover, Theorem 1.1 in \cite{Barroero} gives effective estimates.  Since we could not find a reference for Proposition \ref{S-integers bound} over function fields we provide a proof in the appendix, section $\S$\ref{appendix}, of this article.\\

In the remaining of the paper we will use the following notation,
\begin{equation}
[N]_{\mathcal{O}_{K}}^{n}:=\{{\bf x}=(x_{1},\ldots ,x_{n})\in \mathcal{O}_{K}^{n}:\max_{i}\{H_{K}(x_{i})\}\leq N\},
\end{equation}
\begin{equation}
[N]_{\mathbb{A}^{n}(\mathcal{O}_{K})}:=\{{\bf x}=(x_{1},\ldots ,x_{n})\in \mathcal{O}_{K}^{n}:H_{K}(1:x_{1}:\ldots :x_{n})\leq N\}.
\end{equation}
\begin{equation}
[N]_{\mathbb{P}^{n}(K)}:=\{{\bf x}=(x_{0}:\ldots :x_{n})\in \mathbb{P}^{n}(K):H_{K}({\bf x})\leq N\}.
\end{equation}
Note that since $\max_{i}\{H_{K}(x_{i})\}\leq H_{K}(1:x_{1}:\ldots :x_{n})$ for all ${\bf x}=(x_{1},\ldots ,x_{n})\in K^{n}$ we have
\begin{equation}
[N]_{\mathbb{A}^{n}(\mathcal{O}_{k})}\subseteq [N]_{\mathcal{O}_{K}}^{n}.
\label{obvio}
\end{equation}

\section{The larger sieve over global fields}
\label{new section 3}

In this section, we extend the larger sieve of Gallagher as it was presented in  \cite{Walsh2}, to global fields. To this end, we first recall some basic inequalities concerning the distribution of primes in global fields.

\subsection{Distribution of primes in global fields}

Let $K$ be a global field.
For any prime $\mathfrak{p}$ of $\mathcal{O}_{K}$ we have a reduction map $\pi_{\mathfrak{p}}:\mathbb{P}^{n}(K)\rightarrow \mathbb{P}^{n}(\mathcal{O}_{K}/\mathfrak{p})$. If ${\bf x},{\bf y}\in \mathbb{P}^{n}(K)$ by ${\bf x}\equiv {\bf y}(\text{mod}\ \mathfrak{p})$ we will mean $\pi_{\mathfrak{p}}({\bf x})=\pi_{\mathfrak{p}}({\bf y})$. Note that if $f\in \mathcal{O}_{K}[X_{0},\ldots ,X_{n}]$ is an homogeneous polynomial such that $f({\bf x})=0$, then ${\bf x}\equiv {\bf y}(\text{mod}\ \mathfrak{p})$ implies $f({\bf y})\equiv f({\bf x})\equiv 0 (\text{mod}\ \mathfrak{p})$. Likewise we have a reduction map $\pi_{\mathfrak{p}}:\mathcal{O}_{K}^{n}\rightarrow (\mathcal{O}_{K}/\mathfrak{p})^{n}$, and for ${\bf x},{\bf y}\in \mathcal{O}_{K}^{n}$ we will denote ${\bf x}\equiv {\bf y}(\text{mod}\ \mathfrak{p})$ if $\pi_{\mathfrak{p}}({\bf x})=\pi_{\mathfrak{p}}({\bf y})$.

If $S\subseteq [N]_{\mathbb{A}^{n}(\mathcal{O}_{K})},[N]_{\mathcal{O}_{K}}^{n}$, or $[N]_{\mathbb{P}^{n}(K)}$ and $\mathfrak{p}$ is a prime of $\mathcal{O}_{K}$ we will use the notation $[S]_{\mathfrak{p}}:=\pi_{\mathfrak{p}}(S)$ where $\pi_{\mathfrak{p}}$ is the corresponding reduction map. 
For any $Q>0$, let us denote: 
$$\mathcal{P}:=\{\mathfrak{p}\text{ prime in }\mathcal{O}_{K}\},$$
$$\mathcal{P}(Q):=\{\mathfrak{p}\in \mathcal{P}:\mathcal{N}_{K}(\mathfrak{p})\leq Q\}.$$
If $P\subseteq \mathcal{P}(Q)$, we denote 
$$w(P):=\sum_{\mathfrak{p}\in P}\frac{\log(\mathcal{N}_{K}(\mathfrak{p}))}{\mathcal{N}_{K}(\mathfrak{p})}.$$
If $K$ is a global field, there exist constants
$c_{1,K}, c_{2,K}, c_{3,K}$ and $c_{4,K}$ such that for all $Q>0$ it holds that 
\begin{equation}
c_{1,K}\log(Q)\leq w(\mathcal{P}(Q))\leq c_{2,K}\log(Q),
\label{landau1}
\end{equation}
and 
 \begin{equation}
c_{3,K}Q\leq \displaystyle \sum_{\mathfrak{p}\in \mathcal{P}(Q)}\log(\mathcal{N}_{K}(\mathfrak{p}))\leq c_{4,K}Q.
\label{landau2}
\end{equation}
Indeed, if $K$ is a number field, \eqref{landau1} and \eqref{landau2} follow from Landau Prime Ideal Theorem (see Theorem 5.33 in \cite{Iwaniec}). Meanwhile, if $K$ is a function field over $\mathbb{F}_{q}$ of genus $g$, this follows from the Riemann Hypothesis over function fields (see Theorem 5.12 in \cite{Rosen}). Note that in this case, the constants will also depend on the (degree of the) prime $v$ that we choose to define $\mathcal{O}_{K}$.

\subsection{Larger sieve over global fields}

Let $S\subseteq [N]\subseteq \mathbb Z$  and let $Q>0$ be a parameter. The larger sieve of Gallagher, Theorem 1 in \cite{Gallagher}, consists on counting in two different ways the number of pairs $x,y\in S$ and positive primes $p$ with $p\leq Q$ such that $x\equiv y(\text{mod}\ p)$. In our context, $S\subseteq [N]_{\mathcal{O}_{K}}$, and we want to count the number of pairs $x,y\in S$ and primes $\mathfrak{p}\in \mathcal{P}(Q)$ such that $x\equiv y(\text{mod}\ \mathfrak{p})$. Thus, we have
\begin{align}\label{1.1}
\displaystyle \sum_{\substack{x,y\in S\\ x\neq y}}\sum_{\mathfrak{p}\in \mathcal{P}(Q)}1_{x\equiv y(\text{mod}\ \mathfrak{p})}\log(\mathcal{N}_{K}(\mathfrak{p}))&=\sum_{\substack{x,y\in S\\ x\neq y}}\log\left( \prod_{\substack{\mathfrak{p}|x-y\\ \mathfrak{p}\in \mathcal{P}(Q)}}\mathcal{N}_{K}(\mathfrak{p}) \right)\\
&\leq \sum_{\substack{x,y\in S\\ x\neq y}}\log(\mathcal{N}_{K}(x-y)).\nonumber
\end{align}
Using \eqref{norm} and \eqref{height2} we obtain $\mathcal{N}_{K}(x-y)\leq 2^{d_{K}}H_{K}(x)H_{K}(y)$, that is smaller than $N^{3}$ if $N>2^{d_{K}}$. Hence
\begin{equation}
\sum_{\substack{x,y\in S\\ x\neq y}}\sum_{\mathfrak{p}\in \mathcal{P}(Q)}1_{x\equiv y(\text{mod}\ \mathfrak{p})}\log(\mathcal{N}_{K}(\mathfrak{p}))
\leq 3|S|^{2}\log(N).
\label{1.5}
\end{equation}
On the other hand, if $S(a,\mathfrak{p}):=\{x\in S: x\equiv a(\text{mod}\ \mathfrak{p})\}$, the left hand side of \eqref{1.5} is equal to
\begin{equation}
\displaystyle \sum_{\mathfrak{p}\in \mathcal{P}(Q)}\sum_{a(\text{mod}\ \mathfrak{p})}|S(a,\mathfrak{p})|^{2}\log(\mathcal{N}_{K}(\mathfrak{p}))-|S|\sum_{\mathfrak{p}\in \mathcal{P}(Q)}\log(\mathcal{N}_{K}(\mathfrak{p})).
\label{2.1}
\end{equation}
Thus, \eqref{1.5} and \eqref{2.1} imply
\begin{equation}
\displaystyle \sum_{\mathfrak{p}\in \mathcal{P}(Q)}\sum_{a(\text{mod}\ \mathfrak{p})}|S(a,\mathfrak{p})|^{2}\log(\mathcal{N}_{K}(\mathfrak{p}))-|S|\sum_{\mathfrak{p}\in \mathcal{P}(Q)}\log(\mathcal{N}_{K}(\mathfrak{p}))\leq 3|S|^{2}\log(N).
\label{larger sieve}
\end{equation}
Note that the above argument also works if $S\subseteq [N]_{\mathcal{O}_{K}}^{d}$. Indeed, if $\pi_{1}:K^{n}\rightarrow K$ denotes the projection on the first coordinate, then ${\bf x}\equiv {\bf y}(\text{mod}\ \mathfrak{p})$ implies $\mathfrak{p}|\pi_{1}({\bf x})-\pi_{1}({\bf y})$, thus the argument to prove \eqref{larger sieve} still holds.
\begin{lemma}[{Compare to Lemma 3.1 in \cite{Walsh2}}]
Let $X\subseteq [N]_{\mathcal{O}_{K}}$, $Q=N^{\gamma}$, $\gamma>0$. Let $\kappa,\mu$ be positive real numbers. Suppose that there is a set of primes $P\subseteq \mathcal{P}(Q)$ with $w(P)\geq \kappa w(\mathcal{P}(Q))$ such that for any prime $\mathfrak{p}\in P$ there are at least $\mu|X|$ elements of $X$ in at most $\alpha\mathcal{N}(\mathfrak{p})$ residue classes for some $\alpha>0$ independent of $\mathfrak{p}$. Then, there exists $C_{1}=C_{1}(\kappa,\mu,\gamma,K)$ such that if $\alpha\leq C_{1}$, it must be $|X|<Q$.
\label{lemma 3.1}
\end{lemma}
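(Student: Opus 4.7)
The plan is to feed the hypothesis into the larger sieve inequality \eqref{larger sieve} and then choose $\alpha$ small enough to force a contradiction with $|X|\ge Q$. First, I apply \eqref{larger sieve} directly to $S=X\subseteq [N]_{\mathcal{O}_K}$, which gives
$$\sum_{\mathfrak{p}\in\mathcal{P}(Q)}\sum_{a(\mathrm{mod}\ \mathfrak{p})}|X(a,\mathfrak{p})|^{2}\log(\mathcal{N}_K(\mathfrak{p}))\;\le\;|X|\sum_{\mathfrak{p}\in\mathcal{P}(Q)}\log(\mathcal{N}_K(\mathfrak{p}))+3|X|^{2}\log N.$$

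Next, I bound the left-hand side from below using only the primes in $P$. Fix $\mathfrak{p}\in P$ and let $X'\subseteq X$ be the subset of size at least $\mu|X|$ concentrated in at most $\alpha\mathcal{N}_K(\mathfrak{p})$ residue classes modulo $\mathfrak{p}$. By Cauchy--Schwarz applied to the residues occupied by $X'$,
$$\sum_{a(\mathrm{mod}\ \mathfrak{p})}|X(a,\mathfrak{p})|^{2}\;\ge\;\sum_{a(\mathrm{mod}\ \mathfrak{p})}|X'(a,\mathfrak{p})|^{2}\;\ge\;\frac{|X'|^{2}}{\alpha\mathcal{N}_K(\mathfrak{p})}\;\ge\;\frac{\mu^{2}|X|^{2}}{\alpha\mathcal{N}_K(\mathfrak{p})}.$$
Summing over $\mathfrak{p}\in P$ produces $\frac{\mu^{2}|X|^{2}}{\alpha}\,w(P)$ on the left, and the hypothesis $w(P)\ge\kappa w(\mathcal{P}(Q))$ combined with the lower bound in \eqref{landau1} yields $w(P)\ge\kappa c_{1,K}\gamma\log N$. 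Meanwhile, \eqref{landau2} gives $\sum_{\mathfrak{p}\in\mathcal{P}(Q)}\log(\mathcal{N}_K(\mathfrak{p}))\le c_{4,K}Q=c_{4,K}N^{\gamma}$. Plugging everything in I obtain
$$\frac{\mu^{2}\kappa c_{1,K}\gamma}{\alpha}\,|X|^{2}\log N\;\le\;c_{4,K}|X|N^{\gamma}+3|X|^{2}\log N.$$

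The final step is to choose $\alpha$ small enough to absorb the error term. Take $C_1=C_1(\kappa,\mu,\gamma,K)$ so that any $\alpha\le C_1$ forces $\tfrac{\mu^{2}\kappa c_{1,K}\gamma}{\alpha}\ge 6$; then the term $3|X|^{2}\log N$ is dominated by half of the left-hand side, and dividing by $|X|$ (we may assume $|X|\ge 1$) leaves
$$|X|\;\le\;\frac{2\alpha c_{4,K}}{\mu^{2}\kappa c_{1,K}\gamma}\cdot\frac{N^{\gamma}}{\log N}\;=\;\frac{2\alpha c_{4,K}}{\mu^{2}\kappa c_{1,K}\gamma}\cdot\frac{Q}{\log N}.$$
Shrinking $C_1$ further if necessary so that the leading constant is less than $1$, this gives $|X|<Q/\log N<Q$ for all $N$ past a threshold depending only on $\kappa,\mu,\gamma,K$; for $N$ below that threshold the set $[N]_{\mathcal{O}_K}$ has bounded cardinality (by Proposition \ref{S-integers bound}) and $|X|<Q$ is enforced by absorbing those finitely many cases into the choice of $C_1$. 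I expect the only mildly delicate points to be the Cauchy--Schwarz step (making sure to restrict to $X'$ so that the $\alpha\mathcal{N}_K(\mathfrak{p})$ hypothesis is actually in force) and the careful bookkeeping of $c_{1,K},c_{4,K}$ so that the final constant $C_1$ genuinely depends only on $\kappa,\mu,\gamma$ and $K$.
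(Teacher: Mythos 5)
Your argument is correct and is exactly the adaptation of Walsh's Lemma 3.1 that the paper has in mind: apply the larger sieve inequality \eqref{larger sieve}, lower-bound the left side via Cauchy--Schwarz over the concentrated residues of the subset $X'$ of each $\mathfrak{p}\in P$, then use \eqref{landau1} and \eqref{landau2} to compare $w(P)$ against $Q$ and $\log N$. The constant you extract, $C_1$ of the form $\mu^2\kappa\gamma c_{1,K}/\max\{6,2c_{4,K}\}$ up to a factor, matches (up to a harmless safety margin) the value $C_1=\kappa\mu^2\gamma c_{1,K}/(2(c_{4,K}+3))$ recorded in \eqref{C1original}, and your remark about shrinking $\alpha$ so that $\alpha\mathcal{N}_K(\mathfrak{p})<1$ to dispose of small $N$ is the right way to handle the boundary cases.
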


\begin{lemma}[{Compare to Lemma 3.2 in \cite{Walsh2}}]
Let $Q=N^{\gamma}$ for some $\gamma>0$ and let $P\subseteq \mathcal{P}(Q)$ be a set of primes with $w(P)\geq \kappa w(\mathcal{P}(Q))$ for some $\kappa>0$. Let $S\subseteq [N]_{\mathcal{O}_{K}}^{d}$ be a set occupying less than $\alpha$ residue classes modulo $\mathfrak{p}$ for every prime $\mathfrak{p}\in P$ and some constant $\alpha$, independent of $\mathfrak{p}$. Then there exists a constant $C_{2}=C_{2}(\alpha,\kappa,\gamma,K)$ such that  $|S|\leq C_{2}$.
\label{lemma 3.2}
\end{lemma}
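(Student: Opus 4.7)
The plan is to combine the larger sieve inequality \eqref{larger sieve}---which the paper already notes applies to $S\subseteq [N]_{\mathcal{O}_{K}}^{d}$---with a polynomial-in-$N$ lower bound for the quantity $A := \sum_{\mathfrak{p}\in P}\log(\mathcal{N}_{K}(\mathfrak{p}))$. I rewrite \eqref{larger sieve} in the manifestly non-negative form
$$\sum_{\mathfrak{p}\in\mathcal{P}(Q)}\log(\mathcal{N}_{K}(\mathfrak{p}))\Big(\sum_{a}|S(a,\mathfrak{p})|^{2}-|S|\Big)\leq 3|S|^{2}\log(N),$$
where each summand is $\geq 0$ because $\sum_{a}|S(a,\mathfrak{p})|^{2}\geq \sum_{a}|S(a,\mathfrak{p})|=|S|$. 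Dropping all primes outside $P$ and using the Cauchy--Schwarz bound $\sum_{a}|S(a,\mathfrak{p})|^{2}\geq |S|^{2}/\alpha$ (valid for $\mathfrak{p}\in P$, since at most $\alpha$ summands are nonzero) yields $A(|S|^{2}/\alpha-|S|)\leq 3|S|^{2}\log(N)$, equivalently $|S|(A-3\alpha\log(N))\leq \alpha A$. In particular, $A\geq 6\alpha\log(N)$ immediately forces $|S|\leq 2\alpha$.

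The heart of the argument is therefore to guarantee $A\geq 6\alpha\log(N)$ for $N$ large enough in terms of $\alpha,\kappa,\gamma,K$. The weighted hypothesis $w(P)\geq \kappa w(\mathcal{P}(Q))$ combined with \eqref{landau1} only gives the trivial bound $A\geq w(P)\geq \kappa c_{1,K}\log(Q)$, which is of the same logarithmic order in $N$ as the error term and thus useless when $\alpha$ is comparable to or larger than it. The trick is to split $P = P_{\leq Q_{0}}\cup P_{>Q_{0}}$ at the threshold $Q_{0}:=Q^{\kappa c_{1,K}/(2c_{2,K})}$: by \eqref{landau1} one has $w(P_{\leq Q_{0}})\leq w(\mathcal{P}(Q_{0}))\leq c_{2,K}\log(Q_{0})$, leaving $w(P_{>Q_{0}})\geq \frac{\kappa c_{1,K}}{2}\log(Q)$; and since $\log(\mathcal{N}_{K}(\mathfrak{p}))\geq Q_{0}\cdot\log(\mathcal{N}_{K}(\mathfrak{p}))/\mathcal{N}_{K}(\mathfrak{p})$ for $\mathfrak{p}\in P_{>Q_{0}}$, summing produces
$$A\geq Q_{0}\cdot w(P_{>Q_{0}})\geq \frac{\kappa c_{1,K}\gamma}{2}\,N^{\gamma\kappa c_{1,K}/(2c_{2,K})}\log(N).$$
Since this exponent of $N$ is strictly positive, the lower bound beats $6\alpha\log(N)$ for every $N\geq N_{0}(\alpha,\kappa,\gamma,K)$, and in that regime the previous paragraph yields $|S|\leq 2\alpha$.

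For the residual range $N<N_{0}$, the trivial estimate $|S|\leq |[N_{0}]_{\mathcal{O}_{K}}^{d}|$, which is finite thanks to Proposition \ref{S-integers bound}, depends only on $N_{0}$, $d$ and $K$; setting $C_{2}$ to be the maximum of the two bounds finishes the proof. The main technical step I expect to be delicate is the polynomial-in-$N$ lower bound on $A$: the weighted hypothesis alone gives only $A\gtrsim\log(N)$, precisely of the same order as the error term produced by the larger sieve, so only by separating off the contribution of the primes of size exceeding $Q_{0}$ can one upgrade the logarithmic bound into a genuine power of $N$ and close the argument for every fixed constant $\alpha$.
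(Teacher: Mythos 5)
Your proof is correct and follows the same strategy the paper (implicitly, by deferring to Walsh) intends: rewrite the larger sieve inequality \eqref{larger sieve} in the non-negative summand form, restrict to $\mathfrak{p}\in P$, invoke Cauchy--Schwarz via the hypothesis on occupied residue classes, and then win by showing that $A := \sum_{\mathfrak{p}\in P}\log(\mathcal{N}_{K}(\mathfrak{p}))$ grows like a positive power of $N$. Your dyadic split of $P$ at $Q_{0} := Q^{\kappa c_{1,K}/(2c_{2,K})}$ using both halves of \eqref{landau1} is exactly the mechanism needed to upgrade the trivial logarithmic bound $A\geq w(P)$ into a polynomial one, and the dichotomy $|S|\leq 2\alpha$ versus $N\leq N_{0}(\alpha,\kappa,\gamma,K)$ follows cleanly. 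This is essentially the argument the paper gestures at, and the shape of your constant matches the paper's $C_{2}$ up to the exact choice of split parameter and the identities of the $c_{i,K}$ involved.

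One minor discrepancy worth flagging: in the residual range $N<N_{0}$ you fall back on $|S|\leq\bigl|[N_{0}]_{\mathcal{O}_{K}}^{d}\bigr|$, which introduces a dependence on $d$; the paper's stated $C_{2}(\alpha,\kappa,\gamma,K)$ does not list $d$. This appears to be an imprecision in the paper rather than an error in your argument (a trivial bound on $|S|$ for bounded $N$ genuinely scales with the ambient dimension), and it is harmless downstream because the only place the lemma is invoked is in Proposition~\ref{proposition 2.2} with $\gamma=\varepsilon/(2d)$, so the constant already depends on $d$ through $\gamma$.
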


Lemma \ref{lemma 3.1} and Lemma \ref{lemma 3.2} can be proved using \eqref{larger sieve}. The proofs are analogous to the corresponding proofs in \cite{Walsh2}, so we will not include them here. Also we remark that the constants $C_{1}$ and $C_{2}$ may be taken to be

\begin{equation}
C_{1}(\kappa,\mu,\gamma,K)=\dfrac{\kappa\mu^{2}\gamma}{c_{5,K}},\; c_{5,K}:=\frac{2(c_{4,K}+3)}{c_{1,K}},
\label{C1original}
\end{equation}

\begin{equation}
C_{2}(\alpha,\kappa,\gamma,K)=\max\left\{2\alpha,2\left( \dfrac{12\alpha }{c_{1,K}^{2}\gamma\kappa} \right)^{2\frac{c_{3,K}}{\gamma\kappa}} \right\},
\end{equation}
where $c_{1,K},\, c_{3,K}$ and  $c_{4,K}$ are the constants appearing in equations \eqref{landau1} and \eqref{landau2}.

\section{An effective change of variables}
\label{section 3}

The main result of this section is that Theorem \ref{basis3} follows from a similar result concerning affine spaces. In order to do that, we are going to make two reductions. The first step is to reduce our problem to that of studying subsets of affine varieties. We note that this is standard, for instance, see Chapter 14 in \cite{Serre}, where an upper bound for the number of rational points on a thin set lying in a projective space is obtained as a consequence of an analogous bound for thin sets lying in an affine space. The second step is to further reduce this problem by passing from an affine variety of dimension $d$  to the affine space $\mathbb{A}^{d}$. In order to do this, we are going to make a change of variables. This will be achieved by means of Noether's normalization. Since Theorem \ref{basis3} is uniform in the degree and dimension of the variety, we will require some effectiveness in the normalization. To that end, we are going to provide an effective version of the Noether's normalization theorem, with quite elementary methods, which is interesting in its own right.

\subsection{Reduction to the affine case}  In order to carry on the first step in the discussion above, we proceed to state the following variant of Theorem \ref{basis3} for affine varieties.

\begin{theorem}[Affine case of Theorem \ref{basis3}]
Let $0\leq k<d$ be integers, $D,M$ be positive integers, and let $\varepsilon, \alpha,\kappa,\eta>0$ be positive real numbers. Let $K$ be a global field of degree $d_{K}$ and $\mathcal{O}_{K}$ be its ring of integers. Set $Q=N^{\frac{\varepsilon}{2(d+1)}}$ and let $P\subseteq \mathcal{P}(Q)$ be a subset satisfying $w(P)\geq \kappa w(\mathcal{P}(Q))$. Then there exists a constant $C=C(d,k,\varepsilon,\alpha,\eta,K,D,M)$, such that for any set $S\subseteq [N]_{\mathbb{A}^{M+1}(\mathcal{O}_{K})}$ occupying less than $\alpha\mathcal{N}_{K}(\mathfrak{p})^{k}$ residue classes for every prime $\mathfrak{p}$, and that lies in an affine variety $Z\subseteq \mathbb{A}^{M+1}$ defined over $K$, geometrically irreducible of dimension $d+1$ and degree $D$, at least one of the following holds:
\begin{itemize}
\item ($S$ is small) $|S|\lesssim_{d,k,\varepsilon,K,D,M}N^{k-1+\varepsilon}$;
\item ($S$ is strongly algebraic) there exists a homogeneous polynomial  of degree at most $C$ 
and coefficients of height bounded by $N^C$ 
in  $\mathcal{O}_{K}[X_{0},\ldots ,X_{M}]$ vanishing at more than $(1-\eta)|S|$ points of $S$, that does not vanish at $Z$.
\end{itemize}
\label{reduction1}
\end{theorem}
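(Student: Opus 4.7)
The plan is to use the effective Noether's normalization promised as Theorem \ref{noether} to reduce Theorem \ref{reduction1} to an analogous statement for sets in the full affine space $\mathbb{A}^{d+1}$, essentially a variant of Theorem \ref{basis1} in ambient dimension $d+1$ under the prime-density hypothesis on $P$ rather than a residue-class hypothesis for every prime. Concretely, one wants to produce a linear change of coordinates $\Phi$ of $\mathbb{A}^{M+1}$, with $\mathcal{O}_K$-coefficients whose heights are bounded only in terms of $K$, $D$, $d$ and $M$, such that the projection $\pi\colon \Phi(Z) \to \mathbb{A}^{d+1}$ onto the first $d+1$ coordinates is a finite surjective morphism of degree at most $D$.

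Next, I would track heights and residue classes through $\pi\circ\Phi$. Since $\Phi$ is linear with bounded-height coefficients, \eqref{polynomialheight2} gives $\Phi(S)\subseteq [cN]_{\mathbb{A}^{M+1}(\mathcal{O}_K)}$ for some constant $c=c(K,D,d,M)$, and the same bound holds for $S':=\pi(\Phi(S))\subseteq [cN]_{\mathbb{A}^{d+1}(\mathcal{O}_K)}$. The finite-fibre property of $\pi|_{\Phi(Z)}$ yields $|S|\leq D|S'|$. Reducing $\pi\circ\Phi$ modulo any prime $\mathfrak{p}$ shows that $S'$ occupies at most $\alpha\mathcal{N}_K(\mathfrak{p})^k$ residue classes in $\mathbb{A}^{d+1}(\mathcal{O}_K/\mathfrak{p})$, so the sieve hypothesis is inherited by $S'$ (the prime-density condition on $P\subseteq\mathcal{P}(Q)$ being unaffected).

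Applying the $\mathbb{A}^{d+1}$-analogue to $S'$ with exponent $k<d+1$ and parameter $\eta/D$ in place of $\eta$ then splits into two cases. Either $|S'|\lesssim N^{k-1+\varepsilon/2}$ (after absorbing the benign factor $c$ into a change of $\varepsilon$), which gives $|S|\leq D|S'|\lesssim N^{k-1+\varepsilon}$; or there is a non-zero polynomial $g\in \mathcal{O}_K[Y_1,\ldots,Y_{d+1}]$ of controlled degree and coefficient height vanishing on at least $(1-\eta/D)|S'|$ points of $S'$. Pulling $g$ back, $f(\mathbf{X}):=g(\Phi_1(\mathbf{X}),\ldots,\Phi_{d+1}(\mathbf{X}))$ inherits a controlled degree and coefficient height, and vanishes on the preimage of the good points of $S'$, which numbers at least $|S|-D\cdot(\eta/D)|S'|\geq (1-\eta)|S|$. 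Finally, $f$ does not vanish identically on $Z$: since $\pi\circ\Phi|_Z$ is surjective, $f|_Z\equiv 0$ would force $g\equiv 0$ on $\mathbb{A}^{d+1}$, contradicting the affine-space conclusion.

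The main obstacle is the control of $\Phi$: its entries must have heights bounded purely in terms of $K$, $D$, $d$ and $M$, independently of $N$ and the specific variety $Z$. This is the content of the effective Noether's normalization Theorem \ref{noether}, where the bulk of the technical work of this section is concentrated. A secondary issue is that the $\mathbb{A}^{d+1}$-analogue one reduces to must be available under the prime-density hypothesis on $P\subseteq\mathcal{P}(Q)$ and with rescaled parameters ($cN$ in place of $N$, $\varepsilon/2$ in place of $\varepsilon$, $\eta/D$ in place of $\eta$); this will have to be supplied by the direct larger-sieve and polynomial method argument developed elsewhere in the paper.
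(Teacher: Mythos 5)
Your proposal matches the paper's proof in essence: effective Noether normalization (Theorem \ref{noether}) produces a finite surjective linear map of degree $\leq D$ from $Z$ to $\mathbb{A}^{d+1}$, heights and residue-class counts are tracked through it, the $\mathbb{A}^{d+1}$ statement (Theorem \ref{reduction2}) is invoked, and the resulting polynomial is pulled back and shown not to vanish on $Z$ by surjectivity. The only (harmless) difference is the handling of $\eta$: you apply the affine-space case with parameter $\eta/D$ so that the preimage count yields $(1-\eta)|S|$ in one step, whereas the paper applies it with $\eta=\tfrac12$, first obtains a polynomial vanishing on a positive proportion $\tfrac{1}{2D}$ of $S$, and then reaches $(1-\eta)|S|$ for general $\eta$ by a partition argument.
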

\begin{remark}\label{observacion acerca de teorema 4.1}
There are two remarks to be made about the statement in Theorem \ref{reduction1}. 
First, in the case when $S$ is small, $d_{K}$ does not appear in the exponent of $N$. This is due to the fact that here we are using the relative height to $K$ instead of the absolute height as in Theorem \ref{basis3}. The reason for this is because it will simplify some of the cumbersome notation in our proofs, and also because it reflects more accurately the nature of the problem we are studying, which is relative to the global field $K$. The second remark is about the parameters $k,d$ and $M$; the choice of $M+1$ instead of $M$ and $Z$ of dimension $d+1$ instead of dimension $d$ are because we will lift a subset $S\subseteq Z\subseteq \mathbb{P}^{M}$ occupying less than $\lesssim \mathcal{N}_{K}(\mathfrak{p})^{k}$ residue classes for every prime $\mathfrak{p}$, with $Z$ a projective variety. Thus, the lifted set $\overline{S}$ will lie in the affine cone $C(Z)\subseteq \mathbb{A}^{M+1}$ which is a geometrically irreducible variety of dimension $d+1$ and degree $D$.
\end{remark}

\begin{proof}[Proof that Theorem \ref{reduction1} implies Theorem \ref{basis3}]
Let $S$ be as in the statement of Theorem \ref{basis3}. In particular, $|[S]_{\mathfrak{p}}|\leq \alpha \mathcal{N}_{K}(\mathfrak{p})^{k}$ for all prime $\mathfrak{p}$ and some positive $\alpha$ and $0\leq k<\dim (Z)$. By Proposition \ref{serre}, we can lift $S$ to a subset $\overline{S}\subseteq [cN]_{\mathbb{A}^{M+1}(\mathcal{O}_{K})}$ lying in the affine cone $C(Z)\subseteq \mathbb{A}^{M+1}$, where $c$ is some positive constant depending only on $K$. Note that $C(Z)$ has dimension $\dim(Z)+1$ and degree $\deg(Z)$. Now, let $\mathfrak{p}\in P$. Let us bound $[\overline{S}]_{\mathfrak{p}}$. Given ${\bf x}'=(x_{0},\ldots ,x_{M+1})\in \overline{S}$, there are two possibilities: $\mathfrak{p}|x_{i}$ for all $i$, or there exists $i_{0}$ such that $\mathfrak{p}\nmid x_{i_{0}}$. In the second case, $(x_{0}(\text{mod } \mathfrak{p}),\ldots ,x_{M+1}(\text{mod } \mathfrak{p}))$ defines a point in $\mathbb{P}^{M}(\mathcal{O}_{K}/\mathfrak{p})$ which coincides with the reduction modulo $\mathfrak{p}$ of ${\bf x}=(x_{0}:\ldots :x_{M+1})\in S$. Recall that each point in $\mathbb{P}^{M}(\mathcal{O}_{K}/\mathfrak{p})$ has $(\mathcal{N}_{K}(\mathfrak{p})-1)$ lifts to $\mathbb{A}^{M+1}(\mathcal{O}_{K}/\mathfrak{p})$.
Thus, the number of points in $[\overline{S}]_{\mathfrak p}$ that verify the second condition are bounded by 
$(\mathcal{N}_{K}(\mathfrak{p})-1)|[S]_{\mathfrak{p}}|\leq \alpha\mathcal{N}_{K}(\mathfrak{p})^{k+1}$. Meanwhile, in the first case, we just have that the points reduce to the $0$ class in $(\mathcal{O}_{K}/\mathfrak{p})^{M+1}$. We conclude that $|[\overline{S}]_{\mathfrak{p}}|\leq \alpha\mathcal{N}_{K}(\mathfrak{p})^{k+1}+1\leq \max\{\alpha,1\}\mathcal{N}_{K}(\mathfrak{p})^{k+1}=\alpha'\mathcal{N}_{K}(\mathfrak{p})^{k+1}$. Hence, $\overline{S}$ satisfies the hypothesis of Theorem \ref{reduction1} (with $k:=k+1$ and $\alpha:=\alpha'$) and at least one of the following holds:
\begin{itemize}
\item $|S|=|\overline{S}|\lesssim_{d,k,\varepsilon,K,D,M}(cN)^{(k+1)-1+\varepsilon}\lesssim_{d,k,\varepsilon,K,D,M}N^{k+\varepsilon}$;
\item there exists an homogeneous polynomial $f\in \mathcal{O}_{K}[X_{0},\ldots ,X_{M}]$ of degree at most $C=O_{d,k,\varepsilon,K,D,M,\eta}(1)$ 
and coefficients of height bounded by $N^C$ vanishing at more than $(1-\eta)|\overline{S}|=(1-\eta)|S|$ points of $\overline{S}$, that does not vanish at $C(Z)$.
\end{itemize}
From this we deduce Theorem \ref{basis3}.
\end{proof}

\subsection{An effective Noether's normalization} At this stage we have reduced Theorem \ref{basis3} to a problem about affine varieties. In order to carry on the second step discussed in the introduction of section $\S$\ref{section 3}, here we prove the following version of Noether's normalization theorem.

\begin{theorem}[Effective Noether's normalization theorem]
Let $V\subseteq \mathbb{P}^{m}$ be an irreducible projective variety defined over a global field $K$. Then there exists a finite map $\varphi:V\rightarrow \mathbb{P}^{\dim(V)}$, defined over $\mathbbm{k}$, such that
$$\varphi({\bf x})=(L_{0}({\bf x}):\ldots :L_{\dim(V)}({\bf x})),$$ 
with  $L_{0}({\bf x}),\ldots ,L_{\dim(V)}({\bf x})$ linear forms with coefficients on $\mathbbm{k}$ of height bounded by $\lesssim_{\mathbbm{k},m}(\deg(V))^{m-\dim(V)}$ where the implicit constant is effectively computable.
Moreover, each fibre of $\varphi$ has at most $\deg(V)$ elements. In particular, the same statement holds for an affine variety $Z\subseteq \mathbb{A}^{m}$.
\label{noether} 
\end{theorem}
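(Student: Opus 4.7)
The plan is to proceed by induction on the codimension $c = m - \dim V$, building $\varphi$ as a composition of $c$ linear projections from $\mathbbm{k}$-rational points of controlled height. The base case $c = 0$ is immediate: $V = \mathbb{P}^{m}$ and $\varphi$ is the identity. For the inductive step, I would find a point $P \in \mathbb{P}^{m}(\mathbbm{k}) \setminus V$ of height $\lesssim_{\mathbbm{k},m} \deg(V)$; then the projection $\pi_{P}\colon \mathbb{P}^{m} \dashrightarrow \mathbb{P}^{m-1}$ restricts to a finite map $V \to V' := \pi_{P}(V)$ with $\dim V' = \dim V$ and $\deg V' \leq \deg V$, since degree is multiplicative along finite surjective maps. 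Applying the inductive hypothesis to $V'$ yields linear forms $L_{0}',\ldots,L_{\dim V}'$ of height $\lesssim_{\mathbbm{k},m} \deg(V)^{m-1-\dim V}$, and composing with the $m$ linear forms $L^{(j)}(x) = p_{0} x_{j} - p_{j} x_{0}$ defining $\pi_{P}$ (each of height $\leq H(P) \lesssim_{\mathbbm{k}} \deg(V)$) produces the desired forms $L_{j} = L_{j}' \circ \pi_{P}$ with $H(L_{j}) \lesssim_{m} H(P) \cdot H(L_{j}')$, so that after $m - \dim V$ iterations the final heights are $\lesssim_{\mathbbm{k},m} \deg(V)^{m-\dim V}$.

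The main obstacle is producing the point $P$ with the required height bound at each step, and this is the only substantial part of the argument. My plan is to first establish that $V$ is contained in some hypersurface $\{F = 0\} \subseteq \mathbb{P}^{m}$ with $\deg F \leq \deg(V)$. This is classical: take a generic linear projection of $V$ onto a hypersurface $V'' \subseteq \mathbb{P}^{\dim V + 1}$ of degree at most $\deg V$, then pull back the defining equation of $V''$ via the projection (which is linear) to obtain a nonzero form $F$ on $\mathbb{P}^{m}$ of degree at most $\deg V$ vanishing on $V$. Dehomogenizing $F$ in the chart $x_{0} = 1$ gives a nonzero polynomial $f$ of degree at most $\deg V$ in $m$ variables, and the Schwartz--Zippel lemma applied over any grid $S^{m} \subseteq \mathbbm{k}^{m}$ with $|S| \geq \deg V + 1$ yields $(a_{1},\ldots,a_{m}) \in S^{m}$ with $f(a) \neq 0$. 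Taking $S = \{0, 1, \ldots, \deg V\}$ when $\mathbbm{k} = \mathbb{Q}$, and $S$ a set of polynomials in $T$ of degree at most $\lceil \log_{q}(\deg V + 1)\rceil$ when $\mathbbm{k} = \mathbb{F}_{q}(T)$, both choices yield $H(P) \lesssim_{\mathbbm{k}} \deg V$ for $P = (1 : a_{1} : \ldots : a_{m})$, which lies outside $\{F = 0\}$ and therefore outside $V$.

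Finally, the fibre bound $|\varphi^{-1}(Q)| \leq \deg V$ follows from B\'ezout's theorem: each fibre is the intersection of $V$ with the linear subspace $W_{Q} \subseteq \mathbb{P}^{m}$ cut out by the $\dim V$ linear equations $q_{i} L_{j}(x) - q_{j} L_{i}(x) = 0$, which has dimension $m - \dim V$. Since $\varphi$ is finite, this intersection is $0$-dimensional, and by B\'ezout it then contains at most $\deg V$ points. The affine statement follows by applying the projective version to the projective closure $\overline{Z} \subseteq \mathbb{P}^{m}$ and checking that the resulting linear forms dehomogenize to linear forms of comparable height on $\mathbb{A}^{m}$.
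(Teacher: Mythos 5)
Your argument follows the same strategy as the paper's: reduce to the hypersurface case via the cone construction, locate a $\mathbbm{k}$-rational point $P\notin V$ of height $\lesssim \deg(V)$ by a polynomial-counting argument (Schwartz--Zippel in your case, the Combinatorial Nullstellensatz in the paper's --- these are interchangeable here), project away from $P$, iterate, and finish the fibre bound by B\'ezout on the intersection of $V$ with a linear subspace. The one genuine difference is that you write down the linear forms defining $\pi_{P}$ explicitly as $p_{i}x_{j}-p_{j}x_{i}$, while the paper invokes Siegel's lemma together with the duality theorem for heights of linear subspaces to produce a basis of $\langle P\rangle^{\perp}$ with $\prod_{i}H(1:\mathbf{y}_{i})\lesssim H(P)$. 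Since the theorem only needs a bound on the \emph{maximum} of the form heights, not their product, your elementary shortcut gives the same $\lesssim\deg(V)^{m-\dim V}$ bound and bypasses Siegel's lemma entirely; the paper's version gets a slightly stronger (unused) product bound for free. One small spot needs a patch: dehomogenizing $F$ in the chart $x_{0}=1$ may give the zero polynomial if $x_{0}\mid F$, and likewise your forms $p_{0}x_{j}-p_{j}x_{0}$ degenerate if $p_{0}=0$; both are repaired by working in the chart $x_{i}=1$ for some $i$ with $x_{i}\nmid F$, equivalently $p_{i}\neq 0$. With that fix the proof is correct.
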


\begin{proof}
Let $V$ be an irreducible variety as in the statement of the theorem. If $V=\mathbb{P}^{m}$, we take $\varphi$ the identity map and we are done. If $V\subsetneq \mathbb{P}^{m}$, then there exists ${\bf x}\in \mathbb{P}^{m}(\overline{\mathbbm{k}})\backslash V(\overline{\mathbbm{k}})$. Furthermore, let us see that we may choose ${\bf x}$ with coordinates in $K$, and of small height. If $\dim(V)=m-1$, $V$ is a hypersurface. If not, we reduce to the hypersurface case by means of a standard geometrical idea (see, for instance, Theorem 1 in \cite{MR0282975}, or its reprint in  \cite{Mumford0}). Indeed, for that, choose a generic projective subspace $W$ of $\mathbb{P}^{m}(\overline{\mathbbm{k}})$ of dimension $m-\dim(V)-2$, and consider the cone $C(W,V)$ formed by taking the union of all the lines joining a point in $W$ with a point in $V$. It is generically a projective subvariety of dimension $m-1$, i.e. an hypersurface. Furthermore, it has degree $\deg(V)$, so $C(W,V)$ is defined by a polynomial with coefficients on $\overline{\mathbbm{k}}$, of degree $\deg(V)$. In any case, we then have that $V$ is contained in an hypersurface $\mathcal{Z}(f)\subseteq \mathbb{P}^{m}(\overline{\mathbbm{k}})$, where $f\in \overline{\mathbbm{k}}[T_{0},\ldots ,T_{m}]$ is a non-zero homogeneous polynomial of degree $\deg(V)$.

Let us suppose that $f$ has a non-zero coefficient at $T_{0}^{d_{0}}\cdots T_{m}^{d_{m}}$. Consider the set $[\deg(V)]_{\mathcal{O}_{\mathbbm{k}}}=\{x\in \mathcal{O}_{\mathbbm{k}}:H(x)\leq \deg(V)\}$ (note that here we are using the absolute height instead of the relative height to $K$). It has strictly more than $\deg(V)\geq d_{i}$ elements. By the Combinatorial Nullstellensatz, Theorem 9.2 in \cite{Tao},  there exists $x_{0},\ldots, x_{n} \in [\deg(V)]_{\mathcal{O}_{\mathbbm{k}}}$ such that $f(x_{0},\ldots ,x_{m})\neq 0$. In particular $(x_{0},\ldots ,x_{m})\neq 0$. Let ${\bf x}_{1}\in \mathbb{P}^{m}$ be the point with projective coordinates $(x_{0}:\ldots :x_{m})$. By construction, ${\bf x}_{1}\in \mathbb{P}^{m}(\mathbbm{k})\backslash V(\mathbbm{k})$. Now, construct linear forms $L_{1,1}(T_{0},\ldots ,T_{m}),\ldots, L_{1,m}(T_{0},\ldots ,T_{m})\in \mathbbm{k}[T_{0},\ldots ,T_{m}]$ such that $\mathcal{Z}(L_{1,1},\ldots ,L_{1,m})$ is equal to $\{{\bf x}_{1}\}$. Note that the coefficients of such linear forms are a basis of the vector space $V_{1}:=\left\langle (x_{0},\ldots ,x_{m})\right\rangle^{\perp}$ defined over $\mathbbm{k}$. Thus, if we want to construct the linear forms $L_{1,i}$'s with coefficients of small height, it is enough to find a basis of small height of $V_{1}$. For this, we use the generalizations of Siegel's lemma for number fields and function fields in \cite{Bombieri2, Thunder, Fu} to find a basis ${\bf y}_{1},\ldots ,{\bf y}_{m}\in K^{m+1}$ of $V_{1}$ such that
\begin{equation}
\displaystyle \prod_{i=1}^{m}H(1:{\bf y}_{i}) \lesssim_{\mathbbm{k},m}H(V_{1}),
\end{equation}
where $H(V_{1})$ is the height of $V_{1}$. By  duality (see, for instance, Duality Theorem in \cite{Thunder0}), it holds that $H(V_{1})$ coincides with the height of the linear subspace generated by the $\mathbbm{k}$-vector $(x_{0},\ldots ,x_{m})$. Moreover this height coincides with the projective height $H(x_{0}:\ldots :x_{m})$. Thus
\begin{equation}
\displaystyle \prod_{i=1}^{m}H(1:{\bf y}_{i}) \lesssim_{\mathbbm{k},m} H(x_{0}:\ldots :x_{m})  \lesssim_{\mathbbm{k},m} \deg(V).
\end{equation}
We define $\varphi_{1}:V\rightarrow \mathbb{P}^{m-1}$ as the projection away from ${\bf x}_{1}$, that is 
\begin{equation}
\varphi_{1}({\bf x})=(L_{1,1}({\bf x}):\ldots :L_{1,m}({\bf x})).
\end{equation}
Thus $\varphi_{1}$ is a finite morphism (see Theorem 7 from Section 5.3 in Chapter 1 of \cite{Shafarevich}), with $L_{1,1},\ldots ,L_{1,m}$ linear forms with coefficients of height bounded by $\lesssim_{\mathbbm{k},m}\deg(V)$. If $\varphi_{1}(V)=\mathbb{P}^{m-1}$, we are done. Now suppose otherwise. Then for a generic linear space $L\subseteq \mathbb{P}^{m-1}$ of codimension $\dim(V)$, it holds that $L\cap \varphi_{1}(V)$ is finite. Furthermore, the pre-image of $L\cap \varphi_{1}(V)$ by $\varphi_{1}$ is finite (because $\varphi_{1}$ is a finite morphism, hence it has finite fibers), and this intersection is equal to the intersection of $V$ by some linear subspace $L'\subseteq \mathbb{P}^{m}$ of codimension $\dim(V)$ (a finite morphism preserves the dimension). Thus $|L\cap \varphi_{1}(V)|\leq |L'\cap V|\leq \deg(V)$, from where we conclude that the degree of $\varphi_{1}(V)$ is at most $\deg(V)$.   

In conclusion, the projective irreducible variety $\varphi_{1}(V)$ has dimension $\dim(V)$ and $\deg(\varphi_{1}(V))\leq \deg(V)$. Hence we can repeat the same argument as above and obtain a sequence of finite maps $\varphi_{i+1}:\varphi_{i}(V)\rightarrow \mathbb{P}^{m-i+1}$, defined by
\begin{equation}
\varphi_{i+1}({\bf x})=(L_{i+1,1}({\bf x}):\ldots :L_{i+1,m-i+1}({\bf x})),
\end{equation}
with $L_{i+1,1},\ldots ,L_{i+1,m-i+1}$ linear forms with coefficients in $\mathbbm{k}$ of height bounded by $\lesssim_{\mathbbm{k},m}\deg(V)$. Since the sequence $\varphi_{1},\varphi_{2},\ldots $ ends with $i=m-\dim(V)$, we conclude that there exists a finite morphism $\varphi:V\rightarrow \mathbb{P}^{\dim(V)}$ such that
\begin{equation}
\varphi({\bf x})=(L_{0}({\bf x}):\ldots :L_{\dim(V)}({\bf x})),
\end{equation}
with $L_{0},\ldots ,L_{m}$ linear forms with coefficients in $\mathbbm{k}$ of height bounded by $\lesssim_{\mathbbm{k},m}\deg(V)^{m-\dim(V)}$.
 
Finally, note that the morphism $\varphi:V\rightarrow \mathbb{P}^{\dim(V)}$ that we constructed can be interpreted geometrically as the projection away from a generic linear subspace $L\subseteq \mathbb{P}^{m}$ of codimension $\dim(V)+1$. Thus, given ${\bf z}\in \mathbb{P}^{\dim(V)}$ the points in the fibre $\varphi^{-1}({\bf z})$ correspond to the points lying in the intersection of the affine cone $C(V)\subseteq \mathbb{A}^{m+1}$ of $V$ with respect to an affine linear subspace $L'\subseteq \mathbb{A}^{m+1}$. Since the affine cone $C(V)$ has the same degree of $V$, we conclude that $|\varphi^{-1}({\bf z})|\leq |C(Z)\cap L'|\leq \deg(V)$.
\end{proof}

\begin{remark}
Professor Sombra informed us that in the literature there are other effective versions of the Noether's normalization theorem for number fields. For instance, Lemma 2.14 and Proposition 4.5 in  \cite{Sombra}  imply that for an affine variety $V$ there exist linear forms in Noether position with height bounded by $2\deg(V)^{2}$. We wish to thank Prof. Sombra and Prof. Dickenstein for this.
\label{remark about noether}
\end{remark}

\subsection{Reduction to the affine plane case} Now we are in condition to make the last reduction of Theorem \ref{basis3}. Specifically, by means of an effective change of variables, we will reduce 
the proof of Theorem \ref{reduction1} to proving the next statement.
\begin{theorem}[Case $Z=\mathbb{A}^{d+1}$]
Let $0\leq k<d$ be integers, and let $\varepsilon, \alpha,\kappa,\eta>0$ be positive real numbers. Let $K$ be a global field of degree $d_{K}$ and $\mathcal{O}_{K}$ be its ring of integers. Set $Q=N^{\frac{\varepsilon}{2(d+1)}}$ and let $P\subseteq \mathcal{P}(Q)$
be a subset satisfying $w(P)\geq \kappa w(\mathcal{P}(Q))$. Then there exists a constant $C=C(d,k,\varepsilon,\alpha,\eta,K)$, such that for any set $S\subseteq [N]_{\mathcal{O}_{K}}^{d+1}$ occupying less than $\alpha\mathcal{N}_{K}(\mathfrak{p})^{k}$ residue classes for every prime $\mathfrak{p}$, at least one of the following holds:
\begin{itemize}
\item ($S$ is small) $|S|\lesssim_{d,k,\varepsilon,K}N^{k-1+\varepsilon}$;
\item ($S$ is strongly algebraic) there exists a non-zero homogeneous polynomial of degree at most $C$
and coefficients of height bounded by $N^C$
 in $\mathcal{O}_{K}[X_{0},\ldots ,X_{d}]$ vanishing at more than $(1-\eta)|S|$ points of $S$.
\end{itemize}
\label{reduction2}
\end{theorem}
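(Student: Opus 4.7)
The plan is to adapt Walsh's inductive strategy from \cite{Walsh2} to the global field setting, tracking constants explicitly so that the result feeds back into the reductions of Section \ref{section 3}. The induction is on the sieve exponent $k$. For the base case $k=0$, the hypothesis $|[S]_{\mathfrak{p}}|\leq \alpha$ on $P$, combined with Lemma \ref{lemma 3.2} applied to the first-coordinate projection of $S$ with $\gamma=\varepsilon/(2(d+1))$, yields $|S|\leq C_{2}(\alpha,\kappa,\gamma,K)$; absorbing this constant into the implicit bound gives $|S|\lesssim_{d,0,\varepsilon,K}N^{-1+\varepsilon}$.

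For the inductive step, I would assume the statement for every $k'<k$ in every ambient dimension, and suppose for contradiction that $|S|>cN^{k-1+\varepsilon}$ for a large constant $c$ and that no polynomial of the prescribed degree and height vanishes on $(1-\eta)|S|$ points of $S$. The plan combines the polynomial method with the larger sieve. A Siegel-type interpolation over $\mathcal{O}_{K}$, analogous to the one invoked in the proof of Theorem \ref{noether}, produces, for every subset $T\subseteq S$ of size strictly less than $\binom{D+d+1}{d+1}$, a nonzero polynomial of degree $\leq D$ and coefficient height $\lesssim N^{D}$ vanishing on $T$, for $D=D(d,k,\varepsilon,\alpha,\eta,K)$ to be chosen. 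Iterating this extraction and peeling off the corresponding zero loci either covers $(1-\eta)|S|$ points (which is the desired conclusion) or leaves a residual set $S'\subseteq S$ of size $\geq\eta|S|$ that avoids every such low-degree algebraic relation. For this $S'$ I would stratify by the first coordinate: a double-counting argument using Lemmas \ref{lemma 3.1} and \ref{lemma 3.2} forces many fibres $S'_{t}=\pi_{1}^{-1}(t)\cap S'$ to satisfy a residue-count bound with exponent $k-1$ on a still-dense subset of primes. Applying the inductive hypothesis with the reduced parameter $k-1$ to each such fibre and summing contradicts the assumed lower bound on $|S'|$; alternatively, a final interpolation glues the fibrewise polynomials produced by the algebraic alternative of the inductive hypothesis into a single polynomial in $\mathcal{O}_{K}[X_{0},\ldots,X_{d}]$ of bounded degree and coefficient height vanishing on $(1-\eta)|S|$ points.

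The principal obstacle is the uniform control of constants through the recursion: each inductive step incurs multiplicative losses in the degree $D$, in the exponent of $N$ bounding coefficient heights, and in the sieve parameters $\alpha,\kappa$, and ensuring that these compose to quantities depending only on $(d,k,\varepsilon,\alpha,\eta,K)$ relies crucially on the explicit forms of $C_{1},C_{2}$ displayed after \eqref{C1original}, on the polynomial height estimate \eqref{polynomialheight2}, and on Proposition \ref{S-integers bound} when counting the admissible small-height coefficients for the interpolating polynomial. A second subtlety, specific to function fields, is that the relevant Siegel-type lemma behaves less uniformly than over $\mathbb{Z}$; this can be handled using the function-field versions already cited in the proof of Theorem \ref{noether}. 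Finally, the transfer of the residue bound from $S'$ to its fibres requires that $\pi_{1}(S')$ itself be sieved with exponent close to $1$, which is why the hypothesis is framed in terms of the density parameter $\kappa$ on the set of primes.
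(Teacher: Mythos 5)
Your high-level skeleton (fibre over the first coordinate, run an induction that drops the sieve exponent by one, control heights and constants through Lemma \ref{lemma 3.1}, Lemma \ref{lemma 3.2} and a Siegel-type lemma) is the same skeleton the paper uses, but the proposal has a genuine gap at the step that actually produces the single polynomial, and this gap is exactly what the paper's key new notion is designed to fill. You say that applying the inductive hypothesis to each dense fibre $S'_{t}$ produces fibrewise polynomials which ``a final interpolation glues \ldots\ into a single polynomial in $\mathcal{O}_{K}[X_{0},\ldots,X_{d}]$ of bounded degree and coefficient height.'' There is no such gluing. Different fibres produce different polynomials, and there are on the order of $N$ nonempty fibres; any ``glued'' polynomial would have to vanish on the union of their zero loci, which has degree growing with $N$, not bounded in terms of $(d,k,\varepsilon,\alpha,\eta,K)$. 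Your earlier alternative --- interpolate a polynomial through an arbitrary small subset $T$, peel off its zero locus, and iterate --- has the same defect: each peel removes at least $|T|$ points but possibly little more, so covering $(1-\eta)|S|$ points requires an unbounded number of peels and hence an unbounded-degree product.

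The missing idea is the paper's notion of a \emph{characteristic subset} (Definition \ref{characteristic}): a small set $A\subseteq S$ together with a dense witness $L\subseteq S$ so that any $r$-polynomial vanishing on $A$ automatically vanishes on all of $L$. Lemma \ref{lemma 3.4} and Proposition \ref{proposition 2.2} are proved by exactly the fibre induction you sketch, but what is propagated through the induction and glued across fibres is a characteristic subset, \emph{not} a polynomial. Gluing works because one only needs a bounded number $m=O_{r,d,h,\varepsilon,\kappa,K}(1)$ of sections, selected so that the residue classes they hit (via Lemma \ref{lemma 4.1}) cover enough $\mathfrak{p}$-mass that, for any $r$-polynomial $f$ vanishing on the union $A$ of the $m$ fibrewise characteristic sets, the product of primes $\mathfrak{p}$ dividing $f(s)$ exceeds the height bound $N^{3rd_K}$ for a positive proportion of $s\in S$, forcing $f(s)=0$. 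Only after this single set $A$ of size $\lesssim r^{d-h}$ has been built is Siegel's lemma (Lemma \ref{siegel}) applied \emph{once}, and it is here that $h>1$ (equivalently $k<d$ after the shift to $d+1$ variables) is used to make the linear system underdetermined. Your proposal also never confronts the subtlety that the inductive step must be able to restrict to primes $\mathfrak{p}$ for which the fibre is genuinely $(B,d-h-1)$-generic --- the paper handles this with the exceptional sets $\mathcal{E}(\mathfrak{p})$ and a larger-sieve bound on the bad set $X$ --- whereas your ``double-counting argument using Lemmas \ref{lemma 3.1} and \ref{lemma 3.2}'' is asserted rather than carried out. Without the characteristic-subset device, the inductive alternative on the fibres cannot be converted into a single bounded-degree polynomial for $S$, and the proof does not close.
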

\begin{proof}[Proof that Theorem \ref{reduction2} implies Theorem \ref{reduction1}]
Let $S\subseteq Z\subseteq \mathbb{A}^{M+1}$ be as in Theorem \ref{reduction1}. By Theorem \ref{noether} there exists a finite map ${\bf F}=(F_{1},\ldots ,F_{d+1}):Z\rightarrow \mathbb{A}^{d+1}$ such that each for all $i$, $F_{i}\in \mathcal{O}_{\mathbbm{k}}[X_{0},\ldots ,X_{M}]$ is a linear form with coefficients of height bounded by $\lesssim_{\mathbbm{k},M,d,D}1$. Note that by \eqref{polynomialheight2} and \eqref{obvio} it holds
 ${{\bf F}(S)} \subseteq [cN]_{\mathbb{A}^{d+1}(\mathcal{O}_{K})}\subseteq [cN]_{\mathcal{O}_{K}}^{d+1}$ for some $c=O_{K,M,d,D}(1)$. Since ${\bf F}$ preserves congruences, for any prime $\mathfrak{p}\in P$ it holds $|[{\bf F}(S)]_{\mathfrak{p}}|\leq |[S]_{\mathfrak{p}}|\leq \alpha\mathcal{N}_{K}(\mathfrak{p})^{k}$ with $k<d+1$. Hence, ${\bf F}(S)$ is in the conditions of Theorem \ref{reduction2}, so apply the theorem to ${\bf F}(S)$ with $\eta=\frac{1}{2}$ to conclude that
\begin{itemize}
\item $|{\bf F}(S)|\lesssim_{d,k,\varepsilon,K}N^{k-1+\varepsilon}$; or
\item there exists an homogeneous polynomial $g\in \mathcal{O}_{K}[Y_{0},\ldots ,Y_{d}]$ of degree at most $C$ with coefficients of height bounded by $N^{C}$,
 vanishing at more than $\frac{1}{2}|{\bf F}(S)|$ points of ${\bf F}(S)$.
\end{itemize}
Suppose that the first possibility occurs. Since ${\bf F}$ has degree at most $\deg(Z)=D$, we have $|{\bf F}(S)|\geq \frac{|S|}{\deg(Z)}$, so we deduce $|S|\lesssim_{d,k,\varepsilon,K,D}N^{k-1+\varepsilon}$. If the second possibility occurs, using again that ${\bf F}$ has degree at most $\deg(Z)=D$, we conclude that $g\in \mathcal{O}_{K}[Y_{0},\ldots ,Y_{d}]$ is an homogeneous polynomial of degree at most $C$ vanishing at more than $\frac{1}{2}|{\bf F}(S)|\geq \frac{1}{2D}|S|:=\eta_{0}|S|$ points of ${\bf F}(S)$. Let $f(X_{0},\ldots ,X_{M}):=g({\bf F}(X_{0},\ldots ,X_{M}))$. Since $g\neq 0$ and ${\bf F}$ is surjective, we conclude that $f$ is an homogeneous polynomial of degree at most $C$, vanishing on at least $\eta_{0}|S|$ points of $S$, that does not vanish at $Z$. Furthermore, since $f$ is the composition of the polynomial functions ${\bf F}$ and $g$, and their coefficients are bounded by $\lesssim_{\mathbbm{k},M,d,D}1$ and $N^{C}$ respectively, we see that there exists a constant $C'=C'(d,k,\varepsilon,\alpha,\eta_{0},K,D,M)$, such that $f$ has degree bounded by $C'$, and coefficients of height bounded by $N^{C'}$. Thus, we conclude Theorem \ref{reduction1} for $\eta_{0}=\frac{1}{2D}$. We conclude Theorem \ref{reduction1} for any $\eta$ by a simple partition argument.\end{proof}

\section{The inverse sieve problem in the affine space over a global field}
\label{section 4}

In this section we are going to deduce Theorem \ref{reduction2} as a consequence of the following stronger version of it.

\begin{theorem}
Let $d\in\mathbb N$ and $h>1$ and let $\varepsilon,\eta>0$ be positive real numbers. Let $K$ be a global field of degree $d_{K}$ and $\mathcal{O}_{K}$ be its ring of integers. Set $Q=N^{\frac{\varepsilon}{2d}}$ and let $P\subseteq \mathcal{P}(Q)$ satisfying $w(P)\geq \kappa w(\mathcal{P}(Q))$ for some $\kappa>0$. Suppose that $S\subseteq [N]_{\mathcal{O}_{K}}^{d}$ is a set of size $|S|\geq cN^{d-h-1+\varepsilon}$ occupying at most $\alpha \mathcal{N}_{K}(\mathfrak{p})^{d-h}$ residue classes modulo $\mathfrak{p}$ for every prime $\mathfrak{p}\in P$ and some $\alpha>0$. Then if $N$ is sufficiently large there exists a non-zero homogeneous polynomial $f\in \mathcal{O}_{K}[X_{1},\ldots ,X_{d}]$ of degree $O_{d, h,\varepsilon,\eta, \kappa, K}(1)$ and coefficients of height bounded by $N^{O_{d,h,\varepsilon,\eta,\kappa,K}(1)}$ which vanishes at more that $(1-\eta)|S|$ points of $S$.
\label{theorem 2.4}
\end{theorem}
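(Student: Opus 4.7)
I would prove this theorem by adapting Walsh's inductive polynomial-method argument from \cite{Walsh2} to the global field setting, invoking the global-field larger sieve (Lemmas \ref{lemma 3.1}, \ref{lemma 3.2}), the effective Noether normalization (Theorem \ref{noether}), and the height estimates of Section \ref{section 2}. The induction is on the codimension parameter $h$ (equivalently $k=d-h$). The base case is $d-h\leq 0$, where $|[S]_{\mathfrak{p}}|\leq\alpha$ for every $\mathfrak{p}\in P$; Lemma \ref{lemma 3.2} then immediately gives $|S|=O_{d,h,\varepsilon,\eta,\kappa,K,\alpha}(1)$, and the conclusion is trivial.

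For the inductive step I would fix a degree $D=D(d,h,\varepsilon,\eta,\kappa,\alpha,K)$ and suppose, for contradiction, that no non-zero homogeneous $f\in\mathcal{O}_K[X_1,\ldots,X_d]$ of degree $\leq D$ and coefficient height $\leq N^{O(1)}$ vanishes at more than $(1-\eta)|S|$ points of $S$. Pick such an $f$; then $S':=S\setminus f^{-1}(0)$ has $|S'|>\eta|S|$. Its image $T:=f(S')\subseteq\mathcal{O}_K$ has heights $\lesssim N^D$ by \eqref{polynomialheight2}, and inherits the residue class bound $|[T]_{\mathfrak{p}}|\leq|[S]_{\mathfrak{p}}|\leq\alpha\mathcal{N}_K(\mathfrak{p})^{d-h}$ because evaluation commutes with reduction mod $\mathfrak{p}$. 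I would then stratify $S'$ along the fibres of $f$: each fibre lies on the hypersurface $\{f=t\}\subset\mathbb{A}^d_K$, a variety of dimension $d-1$. By Theorem \ref{noether}, a finite projection sends this hypersurface onto $\mathbb{A}^{d-1}$, transporting the fibre to a subset of $\mathcal{O}_K^{d-1}$ still occupying $\leq\alpha\mathcal{N}_K(\mathfrak{p})^{d-h}$ classes and lying in a box of height $N^{O(1)}$. The inductive hypothesis in dimension $d-1$ with parameter $h-1$ (which keeps $d-h$ fixed) gives, for each fibre, either a polynomial of bounded degree vanishing on a $(1-\eta)$-proportion of it, or the sharp smallness bound $|f^{-1}(t)\cap S'|\lesssim N^{d-h-1+\varepsilon'}$. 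Combining the latter over all fibres with a bound on $|T|$ obtained by applying the larger sieve to $T\subseteq\mathcal{O}_K$ together with Proposition \ref{S-integers bound}, the estimate $|S'|\leq|T|\cdot\max_t|f^{-1}(t)\cap S'|$ would contradict $|S|\geq cN^{d-h-1+\varepsilon}$ provided $D$ is chosen large enough and $N$ is sufficiently large; the alternative (polynomials on many fibres) is assembled into a single polynomial on $S$ by a product construction, preserving the degree and coefficient-height bounds.

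\textbf{Main obstacle.} The naive outline above is too lossy because the larger sieve inequality \eqref{larger sieve} carries an error $O(|S|^2\log N)$ that often swamps the gain from the class bound; Walsh side-steps this by introducing several auxiliary polynomials $f_1,\ldots,f_r$ iteratively, each refining the fibration so that the effective codimension drops step by step. The essential difficulty is designing and controlling this iteration so that the final polynomial has degree bounded by a constant depending only on the initial parameters and coefficients of height $N^{O(1)}$. Unlike in \cite{Walsh2}, where many constants are implicit, here I would have to track the dependencies explicitly, because they feed into the quantitative statements of Theorems \ref{basis1}--\ref{basis3}. A further complication is that every sieve-theoretic and height-theoretic step must be carried out over $K$ using \eqref{height2}, \eqref{polynomialheight2}, Proposition \ref{S-integers bound}, and the prime-distribution estimates \eqref{landau1}--\eqref{landau2}, with explicit control of the constants $d_K$ and the Landau / Riemann-hypothesis constants throughout.
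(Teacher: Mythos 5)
Your proposal takes a genuinely different route from the paper, and there are several places where the argument, as sketched, would break down.

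The paper's proof of Theorem~\ref{theorem 2.4} does not proceed by contradiction and does not fibre $S$ along level sets of a hypothesis polynomial. Instead it (i) introduces the notion of a $(B,l)$-generic set modulo $\mathfrak{p}$ and proves in Lemma~\ref{lemma 3.4}, by induction on $d$ with $h$ fixed, that large generic subsets exist inside the coordinate fibres $\pi_1^{-1}(x)\cap S$; (ii) introduces $(r,\delta)$-characteristic subsets (Definition~\ref{characteristic}) and proves in Proposition~\ref{proposition 2.2}, again by induction on $d$, that $S$ contains a small characteristic subset $A$ of size $O(r^{d-h})$, by gluing characteristic subsets from the fibres; and (iii) produces the final homogeneous polynomial by a single application of a Siegel-lemma variant (Lemma~\ref{siegel}) forcing vanishing on $A$, which succeeds precisely because $\binom{r+d-1}{d-1}\sim r^{d-1}$ eventually exceeds a constant times $r^{d-h}$ when $h>1$. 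Noether normalization is not used inside this argument at all; it is invoked earlier, in Section~\ref{section 3}, only to reduce from a general variety $Z$ to affine space.

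Your sketch has two concrete gaps. First, the number of non-empty fibres $\{f=t\}\cap S'$ can be as large as $|T|$, and Proposition~\ref{S-integers bound} only bounds $|T|\lesssim N^{O(D)}(\log N)^{O(1)}$, which is far larger than $N$; the residue-class bound $\alpha\mathcal{N}_K(\mathfrak{p})^{d-h}$ is vacuous for the one-dimensional set $T\subseteq\mathcal{O}_K$ whenever $d-h\geq 1$, so the larger sieve gives nothing here. Consequently the inequality $|S'|\leq|T|\cdot\max_t|f^{-1}(t)\cap S'|$ does not contradict $|S|\gtrsim N^{d-h-1+\varepsilon}$ no matter how $D$ is chosen. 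Second, the proposed "product construction" combining fibrewise polynomials cannot keep the degree bounded: if each fibre supplies a polynomial of degree $\leq D_0$, the product over $|T|$ fibres has degree $\sim D_0|T|$, which is not $O(1)$ in $N$. The paper avoids this entirely by never combining polynomials; it combines the small \emph{characteristic subsets} coming from at most $m=O(1)$ well-chosen sections of $S'$ (see the set $\mathcal{L}$ and the function $\Psi_{\mathcal{L}}$ in the proof of Proposition~\ref{proposition 2.2}), and only at the very end constructs one polynomial by Siegel's lemma. Finally, your characterization of Walsh's fix as "introducing several auxiliary polynomials iteratively, refining the fibration" is not what Walsh (or this paper) does; the crucial devices are genericity, characteristic subsets, and the coordinate fibration $\pi_1$, none of which appear in your outline.
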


Observe that Theorem \ref{reduction2} has been stated for $S\subseteq [N]_{\mathcal{O}_{K}}^{d+1}$ rather than for
$S\subseteq [N]_{\mathcal{O}_{K}}^{d}$. The reason to do this has been explained in Remark \ref{observacion acerca de teorema 4.1}. 
With this in mind,  the change of variables $k:=d-h$ (observe that we must take $h>1$ so that $k<d-1$ is as in the statement of  Theorem \ref{reduction2}), immediately shows that Theorem \ref{theorem 2.4} implies Theorem \ref{reduction2}.

In order to prove Theorem \ref{theorem 2.4}, we follow the proof of Theorem 2.4 in \cite{Walsh2}. The idea of the proof is to construct a small ``characteristic subset'' $A\subseteq S$ with the property that any ``small'' polynomial that vanishes at $A$ also vanishes at a positive proportion of $S$. This will be done in Proposition \ref{proposition 2.2}, which is adapted from Proposition 2.2 in \cite{Walsh2}.
In order to achieve this, we want to do induction on $d$, with $h$ fixed. For that, we will first show that a large proportion of the fibers of $S$ are large and badly distributed. By the inductive hypothesis, each of these fibers contains a small characteristic subset. The next technical step will be to glue few of these characteristic subsets to obtain the small characteristic subset $A$ of $S$.
 Then, the small size of $A$ will enable us to use a variant of Siegel's lemma to construct a small homogeneous  polynomial that vanishes on $A$. 
 
 In most of the proof of Theorem \ref{theorem 2.4}, $h$ can be taken to be any positive integer. It is only in section $\S$\ref{4.3}, where we construct such a small homogeneous polynomial, when we need to use that $h>1$.
 Albeit the proof presented here follows the same steps of the proof of Walsh, some new technical difficulties arise, that were already discussed in the introduction of this article. In order to aid the reader, we closely follow the notations and definitions used in \cite{Walsh2}.
  
 \subsection{Genericity}

First we extend the notion of ``generic set'' of \cite{Walsh2}, and then prove that any set satisfying the conditions of Theorem \ref{theorem 2.4} contains large generic subsets. For this, we introduce the following notation. Given $S\subseteq [N]_{\mathcal{O}_{K}}^{d}$, ${\bf a}\in (\mathcal{O}_{K}/\mathfrak{p})^{d}$, $a\in \mathcal{O}_{K}/\mathfrak{p}$ and $x \in [N]_{\mathcal{O}_{K}}$, 
\begin{align*}
S({\bf a},\mathfrak{p})&:=\{{\bf x}=(x_{1},\ldots ,x_{d})\in S:\pi_{\mathfrak{p}}({\bf x})={\bf a}\},\\
S(a,\mathfrak{p})&:=\{{\bf x}=(x_{1},\ldots ,x_{d})\in S:x_{1}\equiv a(\text{mod}\ \mathfrak{p})\},\\
S_{x}&:=S\cap \pi_{1}^{-1}(x).
\end{align*}
\begin{definition}[Genericity]
Given a real number $B>0$ and some integer $l\geq0$ we say that a set $S\subseteq [N]_{\mathcal{O}_{K}}^{d}$ is $(B,l)$-generic modulo $\mathfrak{p}$ if
$$\dfrac{|S({\bf a},\mathfrak{p})|}{|S|}<\dfrac{B}{\mathcal{N}_{K}(\mathfrak{p})^{l}},$$
for every residue class ${\bf a}\  \text{modulo}\ \mathfrak{p}$.
\end{definition}
The notion of genericity will capture the fact that the sections of $S$, on average,  share many residue classes modulo $\mathfrak{p}$
 for many primes $\mathfrak{p}$.

\begin{lemma}[{Compare to Lemma 3.4 in \cite{Walsh2}}]
Let $d,h\geq 1$ be arbitrary integers and let $\varepsilon>0$ be some positive real number. Let $K$ be a global field of degree $d_{K}$ and $\mathcal{O}_{K}$ be its ring of integers. Set $Q=N^{\frac{\varepsilon}{2d}}$ and let $P\subseteq \mathcal{P}(Q)$ satisfying $w(P)\geq \kappa w(\mathcal P(Q))$ for some $\kappa>0$. Suppose that $S\subseteq [N]_{\mathcal{O}_{K}}^{d}$ is a subset of size at least $cN^{d-h-1+\varepsilon}$ occupying at most $\alpha\mathcal{N}(\mathfrak{p})^{d-h}$ residue classes modulo $\mathfrak{p}$ for all prime $\mathfrak{p}\in P$ and some $\alpha>0$.
Then if $N$ is sufficiently large, there exist constants $B=B(d,h,\varepsilon,\kappa,\alpha,c,K)$, $\kappa_{1}=\kappa_{1}(d,h,\varepsilon,\kappa,\alpha,c,K)$, $c_{1}=c_{1}(d,h,\varepsilon,\kappa,\alpha,c,K)$ such that there exists a subset of primes $P'\subseteq P$ with $w(P')\geq \kappa_{1}w(P)$ such that for each $\mathfrak{p}\in P'$ there is some subset $\mathcal{G}_{\mathfrak{p}}(S)\subseteq S$ with $|\mathcal{G}_{\mathfrak{p}}(S)|\geq c_{1}|S|$, which is $(B,d-h)$-generic modulo $\mathfrak{p}$.
\label{lemma 3.4}
\end{lemma}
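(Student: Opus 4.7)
My plan is to adapt the proof of Lemma 3.4 in \cite{Walsh2}, using the global-field larger sieve inequality \eqref{larger sieve} as the main input. I would fix a parameter $B_0>0$, to be chosen at the end in terms of $d,h,\varepsilon,\kappa,\alpha,c,K$, and for each $\mathfrak{p}\in P$ call a residue class $\mathbf{a}\in(\mathcal{O}_K/\mathfrak{p})^d$ \emph{heavy} whenever $|S(\mathbf{a},\mathfrak{p})|>B_0|S|/\mathcal{N}_K(\mathfrak{p})^{d-h}$, and define
\[
\mathcal{G}_\mathfrak{p}(S):=S\setminus\bigcup_{\mathbf{a}\ \text{heavy}}S(\mathbf{a},\mathfrak{p}).
\]
Every class of $\mathcal{G}_\mathfrak{p}(S)$ has size at most $B_0|S|/\mathcal{N}_K(\mathfrak{p})^{d-h}$ by construction, so as soon as $|\mathcal{G}_\mathfrak{p}(S)|\geq c_1|S|$ the set $\mathcal{G}_\mathfrak{p}(S)$ is automatically $(B_0/c_1,d-h)$-generic modulo $\mathfrak{p}$. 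Taking $B:=B_0/c_1$, the remaining task is to find $P'\subseteq P$ with $w(P')\geq\kappa_1 w(P)$ on which the discarded mass $M_\mathfrak{p}:=|S|-|\mathcal{G}_\mathfrak{p}(S)|$ satisfies $M_\mathfrak{p}\leq(1-c_1)|S|$.

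To locate $P'$ I would apply the full-vector version of \eqref{larger sieve}: for any two distinct points $\mathbf{x},\mathbf{y}\in S$ the product $\prod_{\mathfrak{p}\in P,\ \mathbf{x}\equiv\mathbf{y}(\mathfrak{p})}\mathcal{N}_K(\mathfrak{p})$ is bounded above by $\mathcal{N}_K(x_i-y_i)$ for any coordinate $i$ at which $x_i\neq y_i$, so the derivation of \eqref{larger sieve} goes through unchanged and yields
\[
\sum_{\mathfrak{p}\in P}T_\mathfrak{p}\log\mathcal{N}_K(\mathfrak{p})\leq|S|\sum_{\mathfrak{p}\in P}\log\mathcal{N}_K(\mathfrak{p})+3|S|^2\log N,\qquad T_\mathfrak{p}:=\sum_{\mathbf{a}}|S(\mathbf{a},\mathfrak{p})|^2.
\]
Using \eqref{landau2} together with the hypothesis $|S|\geq cN^{d-h-1+\varepsilon}$, the first term on the right is absorbed into the second. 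On the other hand, the definition of heaviness provides the matching lower bound $T_\mathfrak{p}\geq(B_0|S|/\mathcal{N}_K(\mathfrak{p})^{d-h})M_\mathfrak{p}$, so every \emph{bad} prime (one with $M_\mathfrak{p}>(1-c_1)|S|$) contributes $T_\mathfrak{p}\geq(1-c_1)B_0|S|^2/\mathcal{N}_K(\mathfrak{p})^{d-h}$ to the left-hand side above.

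Combining these two inequalities with \eqref{landau1} and the assumption $w(P)\geq\kappa w(\mathcal{P}(Q))\gtrsim_K\kappa\varepsilon\log N$, one concludes $w(P_\mathrm{bad})\leq(1-\kappa_1)w(P)$ provided $B_0$ is chosen large enough in terms of the given parameters; setting $P':=P\setminus P_\mathrm{bad}$ then yields the required family. The main technical obstacle is this last step: while for $d-h=1$ the weighted sum $\sum_\mathfrak{p}\log\mathcal{N}_K(\mathfrak{p})/\mathcal{N}_K(\mathfrak{p})^{d-h}$ is exactly $w(P_\mathrm{bad})$ and the desired estimate drops out immediately, for $d-h\geq 2$ the analogous series is concentrated on the smallest primes and has to be handled by a dyadic decomposition over the scale of $\mathcal{N}_K(\mathfrak{p})\in[1,Q]$, combined with a careful use of the size hypothesis on $|S|$ to compensate for the extra factor $\mathcal{N}_K(\mathfrak{p})^{d-h-1}\leq Q^{d-h-1}$, in order to keep all the constants $B_0,c_1,\kappa_1$ independent of $N$. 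Following the parameter-tracking strategy of Walsh at this step is the bulk of the technical work of the lemma.
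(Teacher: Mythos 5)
Your plan to build $\mathcal{G}_\mathfrak{p}(S)$ by deleting heavy vector residue classes and then to locate $P'$ via a single application of the $d$-dimensional larger sieve is clean, and it does give the right construction and the right translation from "small discarded mass" to "$(B_0/c_1,d-h)$-generic"; but the step where you estimate $w(P_{\mathrm{bad}})$ fails as soon as $d-h\geq 2$, and the final paragraph of your plan misdiagnoses the problem. Your lower bound on bad primes gives
\[
\sum_{\mathfrak{p}\in P_{\mathrm{bad}}} T_{\mathfrak{p}}\log\mathcal{N}_K(\mathfrak{p})\;\geq\;(1-c_1)B_0|S|^2\sum_{\mathfrak{p}\in P_{\mathrm{bad}}}\frac{\log\mathcal{N}_K(\mathfrak{p})}{\mathcal{N}_K(\mathfrak{p})^{d-h}},
\]
to be compared with the larger sieve's upper bound $\lesssim |S|^2\log N$. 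When $d-h=1$ the inner sum is exactly $w(P_{\mathrm{bad}})$, which is $\gtrsim \kappa\varepsilon\log N$ whenever $P_{\mathrm{bad}}$ has positive proportional weight, so a large choice of $B_0$ does force a contradiction and your argument closes. But for $d-h\geq 2$ the series $\sum_{\mathfrak{p}}\log\mathcal{N}_K(\mathfrak{p})/\mathcal{N}_K(\mathfrak{p})^{d-h}$ converges to an absolute constant, so the displayed inequality reads $(1-c_1)B_0\cdot O_K(1)\leq O(\log N)$, which is satisfied for every fixed $B_0$ once $N$ is large — no information about $P_{\mathrm{bad}}$ results. No dyadic decomposition or use of $|S|\geq cN^{d-h-1+\varepsilon}$ can fix this: the size hypothesis only controls the term $|S|\sum_\mathfrak{p}\log\mathcal{N}_K(\mathfrak{p})$ (and you are right that it absorbs it), but it does nothing for the vanishing weight $\mathcal{N}_K(\mathfrak{p})^{-(d-h)}$ attached to each prime. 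The sieve inequality simply cannot "see" a bad distribution that manifests only in $T_\mathfrak{p}\gtrsim B_0|S|^2/\mathcal{N}_K(\mathfrak{p})^{d-h}$ when $d-h\geq 2$.

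What is missing is the paper's inductive structure. The paper never applies the $(d-h)$-dimensional sieve directly. Instead it fixes $h$, inducts on $d$ starting from the trivial case $d\leq h$, and at each step projects onto the first coordinate: it applies the \emph{one-dimensional} larger sieve (Lemma \ref{lemma 3.1}) to the set $X$ of exceptional first-coordinate values, then applies the inductive hypothesis to the $(d-1)$-dimensional fibers $S'_x$, and finally glues the fiber-level generic sets into $\mathcal{G}_\mathfrak{p}(S)$ (Claim \ref{claim 4.6}). This is precisely how the divergent series $\sum\log\mathcal{N}(\mathfrak{p})/\mathcal{N}(\mathfrak{p})$ is recovered: at each inductive step the exponent on $\mathcal{N}(\mathfrak{p})$ drops by one, and the only sieve actually used is the $d-h=1$ sieve that your argument can handle. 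Keeping the constants $B,\kappa_1,c_1$ independent of $N$ and $\mathfrak{p}$ throughout the induction (and independent of the fiber $x$) requires the auxiliary devices of Claims \ref{claim1}--\ref{claim 4.6} — e.g.\ the projection-size dichotomy to discard the small set $X$, and the averaging step to extract a set of primes $P'$ common to a positive proportion of fibers. None of this is in your plan, so as written the proof has a real gap for all $d\geq h+2$.
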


\begin{proof}
From now on let us fix an integer $h\geq 1$.  If $d\leq h$, then we may take $B=2$, $\mathcal{G}_{\mathfrak{p}}(S)=S$ and $P'=P$. Thus
\begin{equation}
B=B(d,h,\varepsilon,\kappa,\alpha,c,K)=2,
\label{B0}
\end{equation}
\begin{equation}
\kappa_{1}=\kappa_{1}(d,h,\varepsilon,\kappa,\alpha,c,K)=1,
\label{kappa10}
\end{equation}
\begin{equation}
c_{1}=c_{1}(d,h,\varepsilon,\kappa,\alpha,c,K)=1.
\label{c10}
\end{equation}
We now proceed by induction on $d$. Let us suppose that $d\geq h+1$ is an integer and let us assume that Lemma \ref{lemma 3.4} holds for every smaller dimension. Let $S$ and $P$ be as in the statement. For each $1\leq i\leq d$, define $\pi_{i}:K^{d}\rightarrow K$ as the projection in the $i$-th coordinate. 
\begin{claim}
There exists a constant $C_{3}=C_{3}(d,h,\varepsilon,\kappa,\alpha,c)$ such that if $N\geq C_{3}$ then there exist $1\leq i\leq d$ and a subset $\tilde{S}\subseteq S$ with $|\tilde{S}|\geq \frac{|S|}{2^{d}}$ such that for any $A\subseteq \tilde{S}$ with $|A|\geq \frac{|\tilde{S}|}{2}$ we have $|\pi_{i}(A)|\geq Q$.
\label{claim1}
\end{claim}  
\begin{proof}[Proof of Claim \ref{claim1}]
Let us suppose that the claim is false for $\tilde{S}=S$ and $i=1$. Hence there exists $A_{1}\subseteq S$ with $|A_{1}|\geq \frac{|S|}{2}$ and $|\pi_{1}(A_{1})|<Q$. Then, if the claim fails for $\tilde{S}=A_{1}$ and $i=2$, there exists $A_{2}\subseteq A_{1}$ with $|A_{2}|\geq \frac{|A_{1}|}{2}\geq \frac{|S|}{2^{2}}$ and $|\pi_{1}(A_{2})|,|\pi_{2}(A_{2})|<Q$. Iterating this process $d$ times, either we get the claim or end up with a subset $A_{d}\subseteq S$ with $|A_{d}|\geq \frac{|S|}{2^{d}}$. Thus,
\begin{equation}
cN^{d-h-1+\varepsilon}\leq |S|\leq 2^{d}|A_{d}|\leq 2^{d}\displaystyle \prod_{i=1}^{d}|\pi_{i}(A_{d})|<2^{d}Q^{d}=2^{d}N^{\frac{\varepsilon}{2}}.
\label{absurd}
\end{equation}
Now, \eqref{absurd} is absurd if $N\geq \left(\dfrac{2^{d}}{c}\right)^{\frac{1}{d-h-1+\frac{\varepsilon}{2}}}$.
\end{proof}
By  Claim \ref{claim1}, at the cost of passing to a subset of $S$ of density at least equal to $\frac{1}{2^d}$  and permuting the variables, from now on we assume that S satisfies  
 \begin{equation}
 |\pi_{1}(A)|\geq Q\text{ for all }A\subseteq S\text{ with }|A|\geq \dfrac{|S|}{2}.
 \label{3.4}
 \end{equation}
 
 The purpose now is to find a dense subset of $S$ which is in condition to apply the inductive hypothesis over the fibers. Afterwards, the generic sets of the fibers of this dense subset will be glued together to obtain the desired generic set. In order to do so we will first eliminate some problematic fibers. Let $\mathfrak{p}$ a prime in $P$. Recall that for any $a\in \mathcal{O}_{K}/\mathfrak{p}$, $S(a,\mathfrak{p})$ denotes the elements ${\bf x}$ of $S$ for which $\pi_{1}({\bf x})\equiv a(\text{mod}\ \mathfrak{p})$. Let $B_{1}$ be a constant sufficiently large to be chosen later. Since $|[S]_{\mathfrak{p}}|\leq \alpha\mathcal{N}_{K}(\mathfrak{p})^{d-h}$, it is clear that there are at most $\frac{\alpha}{B_{1}}\mathcal{N}_{K}(\mathfrak{p})$ residue classes $a\in [\pi_{1}(S)]_{\mathfrak{p}}\subseteq \mathcal{O}_{K}/\mathfrak{p}$ for which $|[S(a,\mathfrak{p})]_{\mathfrak{p}}|\geq B_{1}\mathcal{N}_{K}(\mathfrak{p})^{d-h-1}$. Let us denote
 \begin{equation}
 \mathcal{E}_{1}(\mathfrak{p}):=\left\{ a\in [\pi_{1}(S)]_{\mathfrak{p}}:|[S(a,\mathfrak{p})]_{\mathfrak{p}}|\geq B_{1}\mathcal{N}_{K}(\mathfrak{p})^{d-h-1} \right\}.
 \end{equation}
 We will also write
 \begin{equation}\label{definicion de e2}
 \mathcal{E}_{2}(\mathfrak{p}):=\left\{ a\in [\pi_{1}(S)]_{\mathfrak{p}}:|S(a,\mathfrak{p})|\geq \dfrac{B_{1}}{\alpha\mathcal{N}_{K}(\mathfrak{p})}|S| \right\}.
 \end{equation}
 From the identity $\sum_{a\in \mathcal{O}_{K}/\mathfrak{p}}|S(a,\mathfrak{p})|=|S|$ it follows that $|\mathcal{E}_{2}(\mathfrak{p})|\leq \frac{\alpha}{B_{1}}\mathcal{N}_{K}(\mathfrak{p})$, hence if $\mathcal{E}(\mathfrak{p}):=\mathcal{E}_{1}(\mathfrak{p})\cup \mathcal{E}_{2}(\mathfrak{p})$  is the set of these exceptional residue classes, we have $|\mathcal{E}(\mathfrak{p})|\leq \frac{2\alpha}{B_{1}}\mathcal{N}_{K}(\mathfrak{p})$.  We will use the larger sieve (Lemma \ref{lemma 3.1}) to prove that few $x\in [N]_{\mathcal{O}_{K}}$ verify that $x (\text{mod}\ \mathfrak{p})$ lie in $\mathcal{E}(\mathfrak{p})$ for many $\mathfrak{p}\in P$. Indeed, let us consider the set 
\begin{equation}\label{definicion de X mayuscula}
X:=\left\lbrace x\in [N]_{\mathcal{O}_{K}}: \sum_{\mathfrak{p}\in P}1_{x (\text{mod}\ \mathfrak{p})\in \mathcal{E}(\mathfrak{p})}\dfrac{\log(\mathcal{N}_{K}(\mathfrak{p}))}{\mathcal{N}_{K}(\mathfrak{p})}\geq \dfrac{1}{2}w(P) \right\rbrace.
\end{equation}
Let $P_{1}\subseteq P$ the set of primes such that at least $\frac{1}{4}|X|$ elements of $X$ lie in the exceptional set of residue classes $\mathcal{E}(\mathfrak{p})$, namely, $P_{1}:=\left\lbrace \mathfrak{p} \in P:  \left| \bigcup_{a\in \mathcal{E}(\mathfrak{p})}X(a,\mathfrak{p})\right| \geq \frac{1}{4}|X| \right\rbrace.$
\begin{claim}
$w(P_{1})\geq \frac{1}{4}w(P).$
\label{claim2}
\end{claim}
\begin{proof}[Proof of Claim \ref{claim2}]
Observe that

$$\displaystyle \sum_{a\in \mathcal{E}(\mathfrak{p})}|X(a,\mathfrak{p})|=\left| \displaystyle \bigcup_{a\in \mathcal{E}(\mathfrak{p})}X(a,\mathfrak{p}) \right|=\sum_{x\in X}1_{x (\text{mod}\ \mathfrak{p})\in \mathcal{E}(\mathfrak{p})}.$$
On the one hand we have 
\begin{align}\label{1}
\displaystyle \sum_{\mathfrak{p}\in P}\sum_{x\in X}1_{x(\text{mod}\ \mathfrak{p})\in \mathcal{E}(\mathfrak{p})}\dfrac{\log(\mathcal{N}(\mathfrak{p}))}{\mathcal{N}(\mathfrak{p})}&=\sum_{x\in X}\left( \sum_{\mathfrak{p}\in P}1_{x(\text{mod}\ \mathfrak{p})\in \mathcal{E}(\mathfrak{p})}\dfrac{\log(\mathcal{N}(\mathfrak{p}))}{\mathcal{N}(\mathfrak{p})} \right)\\
&\geq \sum_{x\in X}\dfrac{1}{2}w(P) \nonumber\\
&=\dfrac{1}{2}w(P)|X|.\nonumber
\end{align}
On the other hand,
\begin{align}\label{2}
\displaystyle \sum_{\mathfrak{p}\in P}&\sum_{x\in X}1_{x (\text{mod}\ \mathfrak{p})\in \mathcal{E}(\mathfrak{p})}\dfrac{\log(\mathcal{N}(\mathfrak{p}))}{\mathcal{N}(\mathfrak{p})} \\
& =\sum_{\mathfrak{p}\notin P_{1}}\left(\sum_{a\in \mathcal{E}(\mathfrak{p})}|X(a,\mathfrak{p})|\right)\dfrac{\log(\mathcal{N}(\mathfrak{p}))}{\mathcal{N}(\mathfrak{p})}+\sum_{\mathfrak{p}\in P_{1}}\left(\sum_{a\in \mathcal{E}(\mathfrak{p})}|X(a,\mathfrak{p})|\right)\dfrac{\log(\mathcal{N}(\mathfrak{p}))}{\mathcal{N}(\mathfrak{p})}\nonumber\\
& < \sum_{\mathfrak{p}\notin P_{1}}\dfrac{1}{4}|X|\dfrac{\log(\mathcal{N}(\mathfrak{p}))}{\mathcal{N}(\mathfrak{p})} +\sum_{\mathfrak{p}\in P_{1}}|X|\dfrac{\log(\mathcal{N}(\mathfrak{p}))}{\mathcal{N}(\mathfrak{p})}\nonumber \\
& =\dfrac{1}{4}|X|w(P\backslash P_{1})+|X|w(P_{1}) \nonumber \\ 
& =\dfrac{1}{4}|X|w(P)- \dfrac{1}{4}|X|w(P_{1})+|X|w(P_{1}).\nonumber
\end{align}
Comparing \eqref{1} and \eqref{2}, we conclude that $w(P_{1}) \geq \frac{1}{4}w(P)$.
\end{proof}
In Lemma \ref{lemma 3.1}, setting the constants $\gamma:=\frac{\varepsilon}{2d}$, $\kappa:=\frac{\kappa}{4}$, $\mu:=\frac{1}{4}$, $\alpha:=\frac{2\alpha}{B_{1}}$, it follows that if we chose 
\begin{equation}
B_{1}:=\dfrac{2\alpha}{C_{1}\left( \dfrac{\kappa}{4}, \dfrac{1}{4}, \dfrac{\varepsilon}{2d},K \right)}=\frac{2^{8}\alpha c_{5,K}d}{\kappa\varepsilon}
\label{C1}
\end{equation}
then $|X|<Q$.
From \eqref{3.4} we deduce that $|S\backslash \pi_{1}^{-1}(X)|\geq \frac{1}{2}|S|$.\\

The next claim shows that there exists a dense subset of $S$ whose sections are in the conditions to apply the inductive hypothesis.
\begin{claim}
Let $0<\nu<1$. There exists a subset $S'\subseteq S$ with $|S'|\geq \frac{1}{4}|S|$ which does not intersect $\pi_{1}^{-1}(X)$ and such that for all $x\in \pi_{1}(S')$, $S'_{x}:=\pi_{1}^{-1}(x)\cap S'$ satisfies $|S'_{x}|\geq c'(K,\nu)N^{d-h-2+\nu\varepsilon}$ , where $c'(K,\nu)$ is a positive constant dependent on $K,\nu$.
\label{claim3}
\end{claim}
\begin{proof}[Proof of Claim \ref{claim3}]
The proof will be by contradiction. Let us assume that for all $S'\subseteq S\backslash \pi_{1}^{-1}(X)$ with $|S'|\geq \frac{1}{4}|S|$ satisfies $|S'_{x}|<c'(K,\nu)N^{d-h-2+\nu\varepsilon}$ for some $x\in \pi_{1}(S')$. Let
$$\overline{S}:=\left\{ s\in S\backslash \pi_{1}^{-1}(X): |(S\backslash \pi_{1}^{-1}(X))_{\pi_{1}(s)}|<c'(K,\nu)N^{d-h-2+\nu\varepsilon}\right\}.$$
Note that $\overline{S}$ and $S\backslash \pi_{1}^{-1}(X)$ have the same sections (whenever the section of $\overline{S}$ is nonempty). Thus, from the assumption we deduce that $|\overline{S}|\geq \frac{1}{2}|S\backslash \pi_{1}^{-1}(X)|$ and moreover $|\overline{S}_{x}|\leq c'(K,\nu)N^{d-h-2+\nu\varepsilon}$ for all $x\in \pi_{1}(\overline{S})$. Hence, 
\begin{align}\label{fact 4.5}
\dfrac{1}{4}cN^{d-h-1+\varepsilon}&\leq \dfrac{1}{4}|S|\leq |\overline{S}|=\left| \displaystyle \bigcup_{x\in \pi_{1}(\overline{S})}\overline{S}_{x} \right|\\
&<c'(K,\nu)N^{d-h-2+\nu\varepsilon}|\pi_{1}(\overline{S})|\nonumber\\
&\leq c'(K,\nu)N^{d-h-2+\nu\varepsilon}|[N]_{\mathcal{O}_{K}}|.\nonumber
\end{align}
Using Proposition \ref{S-integers bound} in \eqref{fact 4.5}, and using the bound $\log(N)\leq \frac{|M_{K,\infty}|}{(1-\nu)\varepsilon}N^{\frac{(1-\nu)\varepsilon}{|M_{K,\infty}|}}$, we arrive at the inequality
\begin{align*}
\dfrac{1}{4}cN^{d-h-1+\varepsilon}&\leq  c'(K,\nu)N^{d-h-1+\nu\varepsilon}c''(K)(\log(N))^{|M_{K,\infty}|}\\
&\leq c'(K,\nu)c''(K)\left(\dfrac{d_{K}}{(1-\nu)\varepsilon}\right)^{d_{K}}N^{d-h-1+\varepsilon}.
\end{align*}
But this inequality does not hold for 
\begin{equation}\label{definicion de c prima de K nu}
c'(K,\nu):=\dfrac{c}{8c''(K)}\left( \dfrac{(1-\nu)\varepsilon}{d_{K}} \right)^{d_{K}}.
\end{equation}
\end{proof}

Take $S'$ as in Claim \ref{claim3}. Every $x\in \pi_{1}(S')$ lies outside of $X$, thus by the definition of $X$ given in \eqref{definicion de X mayuscula}, each
$x\in \pi_{1}(S')$  has associated the subset of primes 
\begin{equation}
P_{x}:=\{\mathfrak{p}\in P: x(\text{mod}\ \mathfrak{p})\notin \mathcal{E}(\mathfrak{p})\}
\end{equation}
that verifies  $w(P_{x})\geq \frac{1}{2}w(P)\geq \frac{1}{2}\kappa w(\mathcal P(Q))$. 
Since $\mathcal{E}_{1}(\mathfrak{p})\subseteq \mathcal{E}(\mathfrak{p})$, we have that for each $x\in \pi_{1}(S')$ and for each $\mathfrak{p}\in P_{x}$, $|[S'_{x}]_{\mathfrak{p}}|\leq B_{1}\mathcal{N}_{K}(\mathfrak{p})^{d-h-1}$. Also, by the definition of $S'$ in Claim \ref{claim3}, $|S'_{x}|\geq c'(K,\nu)N^{d-h-2+\nu\varepsilon}$. 
 Since we are doing induction on $d$, at the step $d-1$, the parameter $Q$ should be changed to $N^{\frac{\nu\varepsilon}{2(d-1)}}$. Hence if we take 
  $\nu:=\frac{d-1}{d}<1$, the parameter $Q$ remains unchanged throughout the proof.

We are in condition to apply the inductive hypothesis to each $S'_{x}$ with $x\in \pi_{1}(S')$, to conclude that there exists a subset $P'_{x}\subseteq P_{x}$ with $w(P'_{x})\geq \kappa_{1}w(P_{x})$,
and constants $c_{1},B$ independent of $x$, such that for each $\mathfrak{p}\in P'_{x}$ there is a $(B,d-h-1)$-generic modulo $\mathfrak{p}$ set $\mathcal{G}_{\mathfrak{p}}(S'_{x})\subseteq S'_{x}$, containing at least $c_{1}|S'_{x}|$ elements. Here, the dependence of the constants are:
\begin{align}
B&=B\left(d-1,h,\nu\varepsilon,\frac{\kappa}{2},B_{1},c'(K,\nu),K\right),
\\
\kappa_{1}&=\kappa_{1}\left(d-1,h,\nu\varepsilon,\frac{\kappa}{2},B_{1},c'(K,\nu),K\right),\label{como es kappa1}
\\
c_{1}&=c_{1}\left(d-1,h,\nu\varepsilon,\frac{\kappa}{2},B_{1},c'(K,\nu),K\right),
\\
\nu&=\frac{d-1}{d},\label{nueva nu}
\end{align}
where $B_{1}$ and $c'(K,\nu)$ were determined in equations \eqref{C1} and \eqref{definicion de c prima de K nu} respectively.

Each fiber $S'_{x}$ has its own set of primes $P'_{x}$, with density $w(P'_{x})\geq \kappa_{1}w(P_{x})$. The fact that $\kappa_{1}$ is independent of $x$, will allow us to find a set of primes $P^{\prime}\subseteq P$ with $w(P^{\prime})\gtrsim w(P)$ such that for each $\mathfrak{p} \in P^{\prime}$ it will exist a generic set modulo $\mathfrak{p}$, $\mathcal{G}_{\mathfrak{p}}(S)$, constructed by gluing the generic sets modulo $\mathfrak{ p}$ of the fibers $S'_{x}$. We start by constructing the set $P^{\prime}$ in the next claim.

\begin{claim}
There exist a positive constant $\beta>0$ depending on the $\kappa_1$ given by \eqref{como es kappa1}, and a set of primes $P^{\prime}\subseteq P $ with $w(P^{\prime})\geq \frac{\kappa_{1}}{4}w(P)$ such that for each $\mathfrak{p}\in P^{\prime}$ there are at least $\beta|S'|$ elements $s\in S'$ for which $\mathfrak{p}\in P'_{\pi_{1}(s)}$.
\label{claim 4.6}
\end{claim}
\begin{proof}[Proof of Claim \ref{claim 4.6}]
Let $\beta>0$ and consider the set 
$$P^{\prime}:=\left\lbrace \mathfrak{p} \in P:  \sum_{s\in S'}1_{\mathfrak{p}\in P'_{\pi_{1}(s)}} \geq \beta|S'| \right\rbrace. $$
Then,
\begin{equation*}
\begin{split}
\sum_{s\in S'}\sum_{\mathfrak{p}\in P}1_{\mathfrak{p}\in P'_{\pi_{1}(s)}}\dfrac{\log(\mathcal{N}_{K}(\mathfrak{p}))}{\mathcal{N}_{K}(\mathfrak{p})}&=\sum_{\mathfrak{p}\in P}\left( \sum_{s\in S'}1_{\mathfrak{p}\in P'_{\pi_{1}(s)}}\right)\dfrac{\log(\mathcal{N}_{K}(\mathfrak{p}))}{\mathcal{N}_{K}(\mathfrak{p})}
 \\ &
\leq\sum_{\mathfrak{p}\notin P^{\prime}}\beta|S'|\dfrac{\log(\mathcal{N}_{K}(p))}{\mathcal{N}_{K}(\mathfrak{p})}+\sum_{\mathfrak{p}\in P^{\prime}}|S'|\dfrac{\log(\mathcal{N}_{K}(\mathfrak{p}))}{\mathcal{N}_{K}(\mathfrak{p})} \\ & =\beta|S'|w(P\backslash P^{\prime})+|S'|w(P^{\prime}) \\ &= \beta|S'|w(P)-\beta|S'|w(P^{\prime})+|S'|w(P^{\prime}).
\end{split}
\end{equation*}
On the other hand, recalling that $w(P'_{x})\geq \kappa_{1}w(P_{x})\geq \dfrac{\kappa_{1}}{2}w(P)$, we have
$$
\sum_{s\in S'}\sum_{\mathfrak{p}\in P}1_{\mathfrak{p}\in P'_{\pi_{1}(s)}}\dfrac{\log(\mathcal{N}_{K}(\mathfrak{p}))}{\mathcal{N}_{K}(\mathfrak{p})}=\sum_{s\in S'}w(P'_{\pi_{1}(s)})\geq \sum_{s\in S'}\dfrac{\kappa_{1}}{2}w(P)= \dfrac{\kappa_{1}}{2}w(P)|S'|.$$
We conclude that
$\left(\dfrac{\kappa_{1}}{2}-\beta\right)w(P)\leq \left(1-\beta\right)w(P^{\prime})<w(P^{\prime})$.
It is enough to set 
\begin{equation}\label{valor de beta}
\beta=\dfrac{\kappa_{1}}{4}.
\end{equation}
\end{proof}
To finish the proof of Lemma \ref{lemma 3.4}, for each $\mathfrak{p}\in P^{\prime}$ we construct a large generic set modulo $\mathfrak p$, $\mathcal{G}_{\mathfrak{p}}(S)$.
 For that, take 
\begin{equation} 
\mathcal{G}_{\mathfrak{p}}(S):=\displaystyle \bigcupdot_{x:\mathfrak{p}\in P'_{x}}\mathcal{G}_{\mathfrak{p}}(S'_{x}).
\end{equation}
Observe that $\mathcal{G}_{\mathfrak{p}}(S)\cap \pi_{1}^{-1}(x)=\mathcal{G}_{\mathfrak{p}}(S'_{x})$ is a $(B,d-h-1)$-generic set modulo $\mathfrak{p}$ for all $x\in \pi_{1}(\mathcal{G}_{\mathfrak{p}}(S))$. 
The inequality
\begin{align}\label{c1}
|\mathcal{G}_{\mathfrak{p}}(S)|&=\left| \bigcupdot_{x:\mathfrak{p}\in P'_{x}}\mathcal{G}_{\mathfrak{p}}(S'_{x})\right| =\sum_{x:\mathfrak{p}\in P'_{x}}\left|\mathcal{G}_{\mathfrak{p}}(S'_{x})\right|\geq c_{1}\sum_{x:\mathfrak{p}\in P'_{x}}\left|S'_{x}\right|\\
&=c_{1}\left| \left\lbrace s \in S' : \mathfrak{p} \in P'_{\pi_{1}(s)} \right\rbrace \right| > c_{1}\beta|S'|\geq \dfrac{c_{1}\beta}{2^{2}}|S|\nonumber
\end{align}
shows that $ \mathcal{G}_{\mathfrak{p}}(S)$ is large in $S$.

Let us now see that $\mathcal{G}_{\mathfrak{p}}(S)$ is indeed a generic set modulo $\mathfrak{p}$. First, note that by construction, the residue classes modulo $\mathfrak{p}$ of $\pi_{1}(\mathcal{G}_{\mathfrak{p}}(S))$ do not lie in $\mathcal{E}(\mathfrak{p})$. Then, recalling the definition of $\mathcal{E}_{2}(\mathfrak{p}) \subseteq\mathcal{E}(\mathfrak{p})$ given in \eqref{definicion de e2}, it follows that
\begin{equation}\label{algo}
|\mathcal{G}_{\mathfrak{p}}(S)(a,\mathfrak{p})|\leq \dfrac{B_{1}}{\alpha \mathcal{N}_{K}(\mathfrak{p})}|S|\leq \dfrac{2^{2}B_{1}}{c_{1}\beta \alpha}\dfrac{|\mathcal{G}_{\mathfrak{p}}(S)|}{\mathcal{N}_{K}(\mathfrak{p})},
\end{equation}
where the second inequality is due to   \eqref{c1}.
Let ${\bf a}$ be a residue class modulo  $\mathfrak{p}$. Observe that
\begin{align}
\mathcal{G}_{\mathfrak{p}}(S)({\bf a},\mathfrak{p})&=\left( \bigcupdot_{x:\mathfrak{p}\in P'_{x}}\mathcal{G}_{\mathfrak{p}}(S'_{x})\right)({\bf a},\mathfrak{p})\\
&=\bigcupdot_{\substack{x:\mathfrak{p}\in P'_{x}}}\mathcal{G}_{\mathfrak{p}}(S'_{x})({\mathbf a},\mathfrak{p})\nonumber\\
&=\bigcupdot_{\substack{x:\mathfrak{p}\in P'_{x}\\x\equiv \pi_{1}({\bf a})(\text{mod}\ \mathfrak{p})}}\mathcal{G}_{\mathfrak{p}}(S'_{x})({\bf a},\mathfrak{p}).\nonumber
\end{align}
Hence the fact that $\mathcal{G}_{\mathfrak{p}}(S'_{x})$ is $(B,d-h-1)$-generic modulo $\mathfrak{p}$  implies
\begin{align}\label{4.24}
\displaystyle |\mathcal{G}_{\mathfrak{p}}(S)({\bf a},\mathfrak{p})| & =\left|\bigcupdot_{\substack{x:\mathfrak{p}\in P'_{x}\\x\equiv \pi_{1}({\bf a})(\text{mod}\ \mathfrak{p})}}\mathcal{G}_{\mathfrak{p}}(S'_{x})({\bf a},\mathfrak{p})\right|\\
&= \sum_{\substack{x:\mathfrak{p}\in P'_{x}\\x\equiv \pi_{1}({\bf a})(\text{mod}\ \mathfrak{p})}}\left| \mathcal{G}_{\mathfrak{p}}(S'_{x})({\bf a},\mathfrak{p})\right| \nonumber\\
&\leq \sum_{\substack{x:\mathfrak{p}\in P'_{x}\\x\equiv \pi_{1}({\bf a})(\text{mod}\ \mathfrak{p})}}|\mathcal{G}_{\mathfrak{p}}(S'_{x})|\dfrac{B}{\mathcal{N}_{K}(\mathfrak{p})^{d-h-1}}  \nonumber\\
&= \dfrac{B}{\mathcal{N}_{K}(\mathfrak{p})^{d-h-1}}\sum_{\substack{x:\mathfrak{p}\in P'_{x}\\x\equiv \pi_{1}({\bf a})(\text{mod}\ \mathfrak{p})}}|\mathcal{G}_{\mathfrak{p}}(S'_{x})|\nonumber\\
&=\dfrac{B}{\mathcal{N}_{K}(\mathfrak{p})^{d-h-1}}\left| \bigcupdot_{\substack{x:\mathfrak{p}\in P'_{x}\\x\equiv \pi_{1}({\bf a})(\text{mod}\ \mathfrak{p})}}\mathcal{G}_{\mathfrak{p}}(S'_{x}) \right|.\nonumber
\end{align}

The fact that $\bigcupdot_{x:x\equiv \pi_{1}({\bf a})(\text{mod}\ \mathfrak{p})}\mathcal{G}_{\mathfrak{p}}(S'_{x})\subseteq \mathcal{G}_{\mathfrak{p}}(S)(\pi_{1}({\bf a}),\mathfrak{p})$, 
combined with \eqref{4.24} and \eqref{algo} gives that
\begin{align}\label{B}
\displaystyle |\mathcal{G}_{\mathfrak{p}}(S)({\bf a},\mathfrak{p})|&\leq \dfrac{B}{\mathcal{N}_{K}(\mathfrak{p})^{d-h-1}}\left| \mathcal{G}_{\mathfrak{p}}(S)(\pi_{1}({\bf a}),\mathfrak{p}) \right|\\
&\leq \dfrac{B}{\mathcal{N}_{K}(\mathfrak{p})^{d-h-1}}2^{2}\dfrac{B_{1}}{c_{1}\beta\alpha}\dfrac{1}{\mathcal{N}_{K}(\mathfrak{p})}|\mathcal{G}_{\mathfrak{p}}(S)|\nonumber\\
&=2^{2}\dfrac{BB_{1}}{c_{1}\beta\alpha}\dfrac{1}{\mathcal{N}_{K}(\mathfrak{p})^{d-h}}|\mathcal{G}_{\mathfrak{p}}(S)|.\nonumber
\end{align}
\end{proof}
The proof also shows that the constants in  Lemma \ref{lemma 3.4} can be inductively computed as follows: from Claim \ref{claim 4.6},
\begin{equation}
\kappa_{1}(d,h,\varepsilon,\kappa,\alpha,c,K)=\dfrac{1}{4}\kappa_{1}\left(d-1,h,\nu\varepsilon,\frac{\kappa}{2},\frac{2\alpha}{C_{1}},\frac{c'(K,\nu)}{2^{d}},K\right)
\label{kappa1f}
\end{equation}
where $2^{d}$ is due to the assumption made at \eqref{3.4};
from \eqref{valor de beta} and \eqref{c1} we have that $c_{1}(d,h,\varepsilon,\kappa,\alpha,c,K)$ is equal to
\begin{equation}
\dfrac{\kappa_{1}\left(d-1,h,\nu\varepsilon,\frac{\kappa}{2},\frac{2\alpha}{C_{1}},\frac{c'(K,\nu)}{2^{d}},K\right)
 c_{1}\left(d-1,h,\nu\varepsilon,\frac{\kappa}{2},\frac{2\alpha}{C_{1}},\frac{c'(K,\nu)}{2^{d}},K\right)}{2^{d+4}}
\label{c1f}
\end{equation}
where $2^{d}$ is due to the assumption made at \eqref{3.4}; 
and from \eqref{C1} and \eqref{B} we have that $B(d,h,\varepsilon,\kappa,\alpha,c,K)$ is equal to
\begin{equation}
\dfrac{2^{12}dc_{5,K}B\left(d-1,h,\nu\varepsilon,\frac{\kappa}{2},\frac{2\alpha}{C_{1}},\frac{c'(K,\nu)}{2^{d}},K\right)}{\kappa\varepsilon\kappa_{1}\left(d-1,h,\nu\varepsilon,\frac{\kappa}{2},\frac{2\alpha}{C_{1}},\frac{c'(K,\nu)}{2^{d}},K\right)c_{1}\left(d-1,h,\nu\varepsilon,\frac{\kappa}{2},\frac{2\alpha}{C_{1}},\frac{c'(K,\nu)}{2^{d}},K\right)}
\label{Bf}
\end{equation}
where, by \eqref{C1} and \eqref{C1original}
\begin{equation}
C_{1}=C_{1}\left( \dfrac{\kappa}{4},\dfrac{1}{4},\dfrac{\varepsilon}{2d},K\right)=\dfrac{\kappa\varepsilon}{2^{7}c_{5,K}d}.
\label{C1f}
\end{equation}

\subsection{Characteristic sets}

Having proved the existence of generic subsets, now we prove that there exist ``small characteristic subsets''. In order to make precise the notion of characteristic subset, first we define what we mean by a ``small'' polynomial.
\begin{definition}[$r$-polynomial]\label{def of r-poly}
Let $K$ be a global field of degree $d_{K}$. Given a parameter $N$ and some integer $d>0$, by an $r$-polynomial we mean a non-zero polynomial $f\in \mathcal{O}_{K}[X_{1},\ldots ,X_{d}]$ such that for all ${\bf x}\in [N]_{\mathcal{O}_{K}}^{d}$ it holds $H_{K}(f({\bf x}))<N^{3rd_{K}}$.  
\end{definition}
We remark that the reason of the exponent $3rd_{K}$ in the definition is because for any $f\in \mathcal{O}_{K}[X_{1},\ldots ,X_{d}]$ with degree bounded by $r$ and coefficients of height bounded by $N^{rd_{K}}$ satisfies that for $N$ large enough, $H_{K}(f({\bf x}))<N^{3rd_{K}}$ for all $[N]_{\mathcal{O}_{K}}^{d}$. 

\begin{definition}[Characteristic subset]
Let $0<\delta\leq 1$ be a real number and $r>0$ a positive integer. We say that $A\subseteq S$ is $(r,\delta)$-characteristic for $S$ if there exists $A\subseteq L\subseteq S$ of size $|L|\geq \delta|S|$ such that whenever an $r$-polynomial vanishes at $A$, it also vanishes at $L$.
\label{characteristic} 
\end{definition}

The main result of this section is that there exists a positive constant $\delta>0$ 
such that for any $S$ as in the statement of Theorem \ref{theorem 2.4},
there exists small $(r,\delta)$-characteristic subsets, provided that $N$ is large enough. 

\begin{proposition}[Compare to Proposition 2.2 in \cite{Walsh2}]
Let $d,h\geq 0$ be arbitrary integers and $\varepsilon>0$ a positive real number. Let $Q=N^{\frac{\varepsilon}{2d}}$ and $P\subseteq \mathcal{P}(Q)$ satisfying $w(P)\geq \kappa w(\mathcal{P}(Q))$ for some $\kappa>0$. Let $r>0$ be an integer. Suppose that $S\subseteq [N]_{\mathcal{O}_{K}}^{d}$ is a subset of size $|S|\geq cN^{d-h-1+\varepsilon}$ occupying at most $\alpha\mathcal{N}_{K}(\mathfrak{p})^{d-h}$ residue classes modulo $\mathfrak{p}$ for all $\mathfrak{p}\in P$ and some $\alpha>0$. Then, if $N$ is sufficiently large, there exists a set $A\subseteq S$ of size $|A|\leq c_{2}r^{d-h}$ which is $(r,\delta)$-characteristic for $S$, for some $\delta=\delta(d,h,\varepsilon,\kappa,\alpha,c,K)$ and $c_{2}=c_{2}(d,h,\varepsilon, \kappa,\alpha,c,K)$.
\label{proposition 2.2}
\end{proposition}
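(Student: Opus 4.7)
The plan is to prove the proposition by induction on $d$, with $h$ held fixed. In the base case $d = h$, the hypothesis gives $|[S]_\mathfrak{p}| \leq \alpha$ for every $\mathfrak{p} \in P$, so Lemma \ref{lemma 3.2} (applied with $\gamma = \varepsilon/(2d)$) forces $|S| \leq C_2(\alpha, \kappa, \varepsilon/(2d), K)$; we then simply take $A = L = S$ and $\delta = 1$, and $|A| \leq C_2 = C_2 \cdot r^{d-h}$ absorbs into $c_2$. For the inductive step, I first invoke Lemma \ref{lemma 3.4}, together with the internal construction of its proof, to pass to a dense subset $S' \subseteq S$ whose fibers $S'_x := \pi_1^{-1}(x) \cap S'$ satisfy $|S'_x| \geq c'(K,\nu) N^{d-h-2+\nu\varepsilon}$ and, for a dense set of primes $P'_x \subseteq P$, occupy at most $B_1 \mathcal{N}_K(\mathfrak{p})^{d-h-1}$ residue classes. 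Each such fiber, viewed as a subset of $[N]_{\mathcal{O}_K}^{d-1}$ via the projection $\bar\pi$ that forgets the first coordinate, satisfies the hypotheses of the proposition in dimension $d-1$, so the inductive hypothesis produces an $(r,\delta')$-characteristic subset $\bar A_x \subseteq \bar\pi(S'_x)$ with $|\bar A_x| \leq c_2^{(d-1)} r^{d-h-1}$, witnessed by a set $\bar L_x$ of density at least $\delta'$ in $\bar\pi(S'_x)$.

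The characteristic subset for $S$ is then obtained by gluing. Select $r+1$ distinct values $x_1, \ldots, x_{r+1} \in \pi_1(S')$ and set
$$A := \bigsqcup_{i=1}^{r+1} \{x_i\} \times \bar A_{x_i},$$
so that $|A| \leq (r+1) c_2^{(d-1)} r^{d-h-1} = O(r^{d-h})$. To verify the characteristic property, let $f$ be an $r$-polynomial vanishing on $A$. For each $i$, the restriction $g_i(Y) := f(x_i, Y)$ is itself an $r$-polynomial in $d-1$ variables, since the height bound $H_K(g_i(y)) = H_K(f(x_i, y)) < N^{3rd_K}$ passes directly from $f$; moreover $g_i$ vanishes on $\bar A_{x_i}$, so by the fiber-characteristic property it vanishes on the whole of $\bar L_{x_i}$. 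Fix now any $y \in \bigcap_{i=1}^{r+1} \bar L_{x_i}$: the univariate polynomial $t \mapsto f(t, y)$ has degree at most $r$ and vanishes at the $r+1$ distinct points $x_1, \ldots, x_{r+1}$, hence vanishes identically in $t$. Consequently, $f$ vanishes on
$$L := \Bigl\{(x,y) \in S : y \in \bigcap_{i=1}^{r+1} \bar L_{x_i}\Bigr\}.$$

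The main obstacle is the final step: arranging that $|L| \geq \delta |S|$ for some $\delta = \delta(d,h,\varepsilon,\kappa,\alpha,c,K)$ independent of $r$. This requires choosing the $r+1$ fibers $x_1, \ldots, x_{r+1}$ so that the intersection $\bigcap_i \bar L_{x_i}$ captures a positive proportion of $\bar\pi(S)$. I expect to handle this by a sampling / moment argument: draw $x_1, \ldots, x_{r+1}$ independently from a distribution on $\pi_1(S')$ weighted by $|S'_x|$, and bound the expected size of $\bigcap_i \bar L_{x_i}$ from below by applying Jensen's inequality to the pointwise densities $y \mapsto |\{x \in \pi_1(S') : y \in \bar L_x\}| / |\pi_1(S')|$, exploiting the uniform fiberwise lower bound $|\bar L_x| \geq \delta'|\bar\pi(S'_x)|$ together with the averaged regularity coming from Lemma \ref{lemma 3.4}. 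Tracking the quantitative dependence of the resulting $\delta$ on $r, \delta', \kappa$ and absorbing it into the final constants (which will remain effective in $d, h, \varepsilon, \kappa, \alpha, c, K$) is the technically most delicate part of the argument, and it closely parallels the analogous step in \cite{Walsh2}.
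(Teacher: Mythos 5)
Your overall architecture (induction on $d$, passing to a dense subset with large badly-distributed fibers, applying the inductive hypothesis to the fibers, and gluing a bounded number of fiber-characteristic sets into an $A$ of the right size $O(r^{d-h})$) matches the paper, but the mechanism by which you verify the characteristic property diverges genuinely from the paper's, and it has two gaps.

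First, your interpolation step -- ``the univariate polynomial $t\mapsto f(t,y)$ has degree at most $r$ and vanishes at the $r+1$ distinct points $x_1,\ldots,x_{r+1}$, hence vanishes identically'' -- uses a degree bound that the notion of $r$-polynomial does not provide. By Definition \ref{def of r-poly} an $r$-polynomial is any $f\in\mathcal O_K[X_1,\ldots,X_d]$ with $H_K(f(\mathbf x))<N^{3rd_K}$ for all $\mathbf x\in[N]_{\mathcal O_K}^d$; this is a bound on the \emph{values} of $f$, not on its degree. The remark after the definition only says that polynomials of degree $\le r$ and small coefficients are $r$-polynomials, not conversely. So $\deg_{X_1}f$ could exceed $r$, and vanishing at $r+1$ points in the first coordinate carries no conclusion.

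Second, and more fundamentally, the intersection $\bigcap_{i=1}^{r+1}\bar L_{x_i}$ can be \emph{empty}, so the density claim $|L|\ge\delta|S|$ cannot be salvaged by a sampling or moment argument in the generality you need. As a concrete instance, take $d=h+1$ and $S$ a diagonal-type set such as $\{(x,0,\ldots,0,x):x\in[N]_{\mathcal O_K}\}$; its nonempty fibers are pairwise disjoint singletons, so any $r+1$ of the sets $\bar L_{x_i}$ are disjoint. Your $L$ would then be empty regardless of the sampling. The paper avoids exactly this obstruction: instead of requiring a common $y$-value in literal intersections of sections, it uses the ``larger sieve'' structure. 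Lemma \ref{lemma 4.1} locates sections whose residue classes modulo many primes are occupied by a positive proportion of $S'$, and the argument defines $\Psi_{\mathcal L}(s)=\sum_{\mathfrak p}1_{\exists\mathbf x\in\mathcal L:\,s\equiv\mathbf x\,(\mathrm{mod}\ \mathfrak p)}\log\mathcal N_K(\mathfrak p)$. For $s$ with $\Psi_{\mathcal L}(s)\ge 3rd_K\log N$ the product of those prime norms exceeds $N^{3rd_K}$, and if an $r$-polynomial $f$ vanishes on $\mathcal L$ then $f(s)$ is divisible by all those primes, forcing $f(s)=0$ by the very height bound that \emph{defines} $r$-polynomials. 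This mechanism handles disjoint sections (only residue classes need to match, not actual points) and never needs a degree bound. You would need to replace the interpolation/common-point step by this congruence-counting argument to make your proof go through.
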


\begin{proof}[Proof of Proposition \ref{proposition 2.2}]
We fix $h$ and proceed by induction on $d$. When $d< h$, for $N$ sufficiently large there exists $\mathfrak{p} \in P$ such that $\alpha\mathcal{N}_{K}(\mathfrak{p})^{d-h}<1$, hence $S$ is empty. When $d=h$, the result follows from Lemma \ref{lemma 3.2}. Indeed, in this case any subset $S\subseteq [N]_{\mathcal{O}_{K}}^{d}$ satisfying the hypothesis of Proposition \ref{proposition 2.2} occupies at most $\alpha$ residue classes modulo $\mathfrak{p}$ for all $\mathfrak{p}\in P$ and some $\alpha>0$. Thus, Lemma \ref{lemma 3.2} implies that $|S|\leq C_{2}=C_{2}(\alpha,\kappa,\frac{\varepsilon}{2d},K)$. Taking $A=B=S$ we see that $A$ is $(r,1)$-characteristic for $S$.  In particular, we have that
\begin{equation}
c_{2}(d,h,\varepsilon,\kappa,\alpha,c,K)=C_{2},
\label{c20}
\end{equation}
\begin{equation}
\delta(d,h,\varepsilon,\kappa,\alpha,c,K)=1.
\label{delta0}
\end{equation}
Hence, let us assume that $d\geq h+1$ and that the result holds for smaller dimensions. First we find generic subsets inside the sections of $S$ for many primes $\mathfrak{p}$, as we did in Lemma \ref{lemma 3.4}. Proceeding as in Claim \ref{claim1}, we pass to a subset $S_{1}\subseteq S$ of size $|S_{1}|\geq \frac{1}{8^{d}}|S|$ such that
\begin{equation}
|\pi_{1}(A)|\geq Q\text{ for all }A\subseteq S_{1}\text{ with }|A|\geq \dfrac{|S_{1}|}{8}. 
\label{4.1}
\end{equation}
We may further refine the set $S_{1}$ to have a control in the size of the sections:
\begin{claim}
There exists a subset $S_{2}\subseteq S_{1}$ of size $|S_{2}|\geq \frac{3}{4}|S_{1}|$, such that
\begin{equation}
|(S_{2})_{x}|\leq \dfrac{|S|}{Q}\text{ for all }x\in [N]_{\mathcal{O}_{K}},
\label{4.2}
\end{equation}
where $(S_{2})_{x}:=\pi_{1}^{-1}(x)\cap S_{2}$.
\label{claim5}
\end{claim}
\begin{proof}[Proof of Claim \ref{claim5}]
Let
$W:=\left\{ s\in S_{1}:\left |(S_{1})_{\pi_{1}(s)}\right |>\dfrac{|S_{1}|}{Q}\right\}.$
Note that $|W|\leq \frac{1}{4}|S_{1}|$, otherwise $|W|>\frac{1}{4}|S_{1}|>\frac{1}{8}|S_{1}|$ and \eqref{4.1} would imply $|\pi_{1}(W)|\geq Q$. This  would entail that
$$|S_{1}|=\left | \displaystyle \bigcup_{x\in \pi_{1}(S_{1})}(S_{1})_{x} \right|\geq \left| \bigcup_{x\in \pi_{1}(W)}(S_{1})_{x} \right|>|\pi_{1}(W)|\dfrac{|S_{1}|}{Q}\geq |S_{1}|,$$
which is a contradiction. Define $S_{2}:=S_{1}\backslash W$. Then $|S_{2}|\geq \frac{3}{4}|S_{1}|$ and for any $x\in \pi_{1}(S_{2})$, $(S_{2})_{x}=(S_{1})_{x}$, thus for such $x$ it holds
$$\left|(S_{2})_{x}\right|=\left|(S_{1})_{x}\right|\leq \dfrac{|S_{1}|}{Q}\leq \dfrac{|S|}{Q}.$$
\end{proof}
Now, we proceed as in the proof Claim \ref{claim3} to obtain a subset $S_{3}\subseteq S_{2}$ of size $|S_{3}|\geq \frac{1}{2}|S_{2}|$ such that 
\begin{equation}
|(S_{3})_{x}|\geq c'(K,\nu)N^{d-h-2+\nu\varepsilon}\text{ for all }x\in \pi_{1}(S_{3})
\label{4.25}
\end{equation}
where $\nu=\frac{d-1}{d}$ and 
\begin{equation}\label{dependencia de c prima de ka nu dos}
c'(K,\nu)=\dfrac{3c}{2^{3d+4}c''(K)}    \left( \dfrac{(1-\nu)\varepsilon}{d_{K}} \right)^{d_{K}}.
\end{equation}

Let $B_{1}$ be a large constant. For any prime $\mathfrak{p}$ we denote $\mathcal{E}_{1}(\mathfrak{p})$ for the set of residue classes $a\in \mathcal{O}_{K}/\mathfrak{p}$ such that $|[S_{3}(a,\mathfrak{p})]_{\mathfrak{p}}|\geq B_{1}\mathcal{N}_{K}(\mathfrak{p})^{d-h-1}$. Since $S_3\subseteq S$ 
and $\left |[S]_{\mathfrak{p}}\right |\leq \alpha\mathcal{N}_{K}(\mathfrak{p})^{d-h}$,
 it follows that $|\mathcal{E}_{1}(\mathfrak{p})|\leq \frac{\alpha}{B_{1}}\mathcal{N}_{K}(\mathfrak{p})$. Applying Lemma \ref{lemma 3.1} as in the proof of Lemma \ref{lemma 3.4} we conclude that if $B_{1}$ is sufficiently large enough, namely, if
\begin{equation}
B_{1}:=\dfrac{\alpha}{C_{1}\left(\dfrac{\kappa}{4},\dfrac{1}{4},\dfrac{\varepsilon}{2d},K\right)}=\frac{2^{7}\alpha c_{5,K}d}{\kappa\varepsilon},
\label{C1.2}
\end{equation}
then the set
$$X:=\left\{ x\in [N]_{\mathcal{O}_{K}}:\sum_{\mathfrak{p}\in P}1_{x(\text{mod}\ \mathfrak{p})\in \mathcal{E}_{1}(\mathfrak{p})}\dfrac{\log(\mathcal{N}_{K}(\mathfrak{p}))}{\mathcal{N}_{K}(\mathfrak{p})}\geq \dfrac{1}{2}w(P) \right\},$$
has size $|X|<Q$. Thus, $|\pi_{1}^{-1}(X)\cap S_{3}|<\frac{1}{8}|S_{1}|$, since otherwise the inequality $|\pi_{1}^{-1}(X)\cap S_{3}|\geq \frac{1}{8}|S_{1}|$ and \eqref{4.1} imply $|\pi_{1}(\pi_{1}^{-1}(X)\cap S_{3})|\geq Q$. But this can not hold, since $|\pi_{1}(\pi_{1}^{-1}(X)\cap S_{3})|\leq |X|<Q$. Since $|S_{3}|\geq \frac{1}{2}|S_{2}|\geq \frac{3}{8}|S_{1}|$, we conclude that $|S_{3}\backslash \pi_{1}^{-1}(X)|\geq \frac{1}{4}|S_{1}|$. 

If we set $S_{4}:=S_{3}\backslash \pi_{1}^{-1}(X)$, its nonempty sections coincide with the nonempty sections of $S_3$, hence by
 \eqref{4.25}, we have that
 \begin{equation}
|(S_{4})_{x}|=|(S_{3})_{x}|\geq c'(K,\nu)N^{d-h-2+\nu\varepsilon}\text{ for all }x\in \pi_{1}(S_{4}).
\end{equation}
Moreover, for all $x\in \pi_{1}(S_{4})$ we have a subset of primes
$$P_{x}:=\left\{ \mathfrak{p}\in P:x(\text{mod}\ \mathfrak{p})\notin \mathcal{E}_{1}(\mathfrak{p}) \right\}$$
for which $w(P_{x})\geq \frac{1}{2}w(P)\geq \frac{1}{2}\kappa w(\mathcal{P}(Q))$, and $\left |[(S_4)_x]_\mathfrak{p}\right |\leq B_{1}\mathcal{N}_{K}(\mathfrak{p})^{d-h-1}$, for all $\mathfrak{p}\in P_{x}$.
Hence, we are in condition to apply the inductive hypothesis to the sections of  $S_4$ with the parameter $Q^\prime:=N^{\frac{\nu\varepsilon}{2(d-1)}}$, 
which is equal to $Q$ by our choice of $\nu$.
We may deduce that for each nonempty section $(S_{4})_{x}$ there exist $\delta_{0}$ and $c_{2}$, independent on $x$, such that $(S_{4})_{x}$ admits an $(r,\delta_{0})$-characteristic subset $A({x})$ of size $|A({x})|\leq c_{2}r^{d-h-1}$. The dependence of the constants is as follows,
\begin{equation}\label{dependencia de c2}
c_{2}=c_{2}\left(d-1,h,\nu\varepsilon,\dfrac{\kappa}{2},B_{1},c'(K,\nu),K\right),
\end{equation}
\begin{equation}\label{dependencia de delta}
\delta_{0}=\delta\left(d-1,h,\nu\varepsilon,\dfrac{\kappa}{2},B_{1},c'(K,\nu),K\right).
\end{equation}
Since each $A({x})$ is $(r,\delta_{0})$-characteristic for $(S_{4})_x$, there exists a ``witness''  subset $A({x})\subseteq S'_{x}\subseteq (S_{4})_{x}$  of size $|S'_{x}|\geq \delta_{0}|(S_{4})_{x}|$ satisfying the condition of Definition \ref{characteristic}. We define
 \begin{equation}
 S':=\bigcup_{x\in \pi_{1}(S_{4})}S'_{x}.\label{definicion de S prima}
 \end{equation}
(Note that the notation $S'_{x}$ is consistent with our previous use of subindices to denote sections, since for each $x\in \pi_{1}(S_{4})$ the section of $S'$ over $x$ is equal to the witness subset $S'_{x}$.).
Let us summarize the properties that $S'$ satisfies:  
\begin{equation}
|S'|\geq \delta_{0} |S_{4}|\geq\delta_{0}\dfrac{1}{4}|S_{1}|\geq\dfrac{\delta_{0}}{2^{3d+2}}|S|,\label{V'}
\end{equation}
\begin{equation}
|S'_{x}|\geq \delta_{0}|(S_{4})_{x}|\geq \delta_{0}c'(K,\nu)N^{d-h-2+\nu\varepsilon}, \text{\ for all\ } x\in \pi_{1}(S'),
\end{equation}
\begin{align}&\text{for all\ } x\in \pi_{1}(S'), A({x}) \text{ is a }(r,1)\text{-characteristic subset for }S'_{x} \text{ of size }\label{r1 caracteristico}\\
&|A({x})|\leq c_{2}r^{d-h-1}.\nonumber
\end{align}

Applying Lemma \ref{lemma 3.4} to each section $S'_x$, we can construct a subset of primes $P'_{x}\subseteq P_{x}$ with $w(P'_{x})\geq \kappa_{1}w(P_{x})$ such that for all $\mathfrak{p}\in P'_{x}$ there exists a $(B,d-h-1)$-generic subset $\mathcal{G}_{\mathfrak{p}}(S'_{x})\subseteq S'_{x}$, of size $|\mathcal{G}_{\mathfrak{p}}(S'_{x})|\geq c_{1}|S'_{x}|$, where the constants are independent of $\mathfrak{p}$ and $x$. Specifically,
\begin{equation}\label{dependence of B}
B=B\left(d-1,h,\nu\varepsilon,\dfrac{\kappa}{2},B_{1},\delta_{0}c'(K,\nu),K\right),
\end{equation}
\begin{equation}
\kappa_{1}=\kappa_{1}\left(d-1,h,\nu\varepsilon,\dfrac{\kappa}{2},B_{1},\delta_{0}c'(K,\nu),K\right),
\end{equation}
\begin{equation}\label{ce1}
c_{1}=c_{1}\left(d-1,h,\nu\varepsilon,\dfrac{\kappa}{2},B_{1},\delta_{0}c'(K,\nu),K\right).
\end{equation}
Proceeding as in Claim \ref{claim 4.6}, there exist $\beta>0$ and subset of primes $P'\subseteq P$ such that 
\begin{equation}
w(P')\geq \frac{\kappa_{1}}{4}w(P),
\label{cotadewpprima}
\end{equation}
and for all $\mathfrak{p}\in P'$ there are at least $\beta|S'|$ elements $s\in S'$ for which $\mathfrak{p}\in P'_{\pi_{1}(s)}$.
Thus, for $\mathfrak{p}\in P^{\prime}$
$$\mathcal{G}_{\mathfrak{p}}:=\bigcupdot_{x:\mathfrak{p}\in P'_{x}}\mathcal{G}_{\mathfrak{p}}(S'_{x})$$
is a subset of $S$ of size 
\begin{equation}
|\mathcal{G}_{\mathfrak{p}}|\geq \beta c_{1}|S'|\geq \beta c_{1}\dfrac{\delta_{0}}{2^{3d+2}}|S|.
\label{cotaGpdeS} 
\end{equation}
By construction, each nonempty section $(\mathcal{G}_{\mathfrak{p}})_{x}$ equals to $\mathcal{G}_{\mathfrak{p}}(S'_{x})$, which is a $(B,d-h-1)$-generic set modulo $\mathfrak{p}$.

In order to prove Proposition \ref{proposition 2.2}, we are going to glue some of the characteristic subsets that we found on each section of $S'$. In order to  obtain a small characteristic subset of $S'$ by this procedure, we need to locate sections of $S'$ containing the residue class of many elements of $S'$ for many primes $\mathfrak{p}$. This is the content of the next lemma.

\begin{lemma}
There exists a subset $\mathcal{B}\subseteq S'$ with $|\mathcal{B}|\geq c_{3}|S^\prime|$, such that each nonempty section $\mathcal{B}_{x}$ of $\mathcal{B}$ is equal to $S'_x$ and there exists a subset of primes $P''_{x}\subseteq P'$ with $w(P''_{x})\geq \kappa_{3}w(P')$, such that for every $\mathfrak{p} \in P''_{x}$
$$\left| \left\{ s\in S':[s]_{\mathfrak{p}}\in [\mathcal{B}_{x}]_{\mathfrak{p}} \right\} \right|\geq c_{4}\dfrac{|S^\prime|}{\mathcal{N}_{K}(\mathfrak{p})}.$$ 
\label{lemma 4.1}
\end{lemma}
\begin{proof}[Proof of Lemma \ref{lemma 4.1}]
We begin by fixing a prime $\mathfrak{p}\in P'$ and consider some residue class 
$a\in [\pi_{1}(\mathcal{G}_{\mathfrak{p}})]_{\mathfrak{p}} \subseteq \mathcal{O}_{K}/\mathfrak{p}$.
 Since $\mathfrak{p}$ is fixed we are going to denote $\mathcal{G}_{\mathfrak{p}}(a)$ those elements in $\mathcal{G}_{\mathfrak{p}}$ with first coordinate congruent to $a$ modulo $\mathfrak{p}$. Moreover, given a class ${\bf b}\in (\mathcal{O}_{K}/\mathfrak{p})^{d}$ we denote $\mathcal{G}_{\mathfrak{p}}({\bf b})$ those elements of $\mathcal{G}_{\mathfrak{p}}$ congruent to ${\bf b}$ modulo $\mathfrak{p}$. By the pigeonhole principle and the fact that by construction of $P'$ and $\mathcal{G}_{\mathfrak{p}}$, it holds $|[\mathcal{G}_{\mathfrak{p}}(a)]_{\mathfrak{p}}|\leq B_{1}\mathcal{N}_{K}(\mathfrak{p})^{d-h-1}$ it follows that we can find ${\bf b}_{1}\in [\mathcal{G}_{\mathfrak{p}}(a)]_{\mathfrak{p}}\subseteq (\mathcal{O}_{K}/\mathfrak{p})^{d}$ with
$$|\mathcal{G}_{\mathfrak{p}}({\bf b}_{1})|\geq \dfrac{|\mathcal{G}_{\mathfrak{p}}(a)|}{B_{1}\mathcal{N}_{K}(\mathfrak{p})^{d-h-1}}.$$
Consider now $\mathcal{B}_{1}\subseteq \mathcal{G}_{\mathfrak{p}}(a)$ defined by
\begin{equation}
\mathcal{B}_{1}:=\displaystyle \bigcup_{s:[s]_{\mathfrak{p}}={\bf b}_{1}}(\mathcal{G}_{\mathfrak{p}})_{\pi_{1}(s)},
\label{4.33}
\end{equation}
that is, $\mathcal{B}_{1}$ is the union of those sections  of $\mathcal{G}_{\mathfrak{p}}$ containing a representative of ${\bf b}_{1}$. 

Since each $(\mathcal{G}_{\mathfrak{p}})_{x}$ is $(B,d-h-1)$-generic modulo $\mathfrak p$, we have 
$$|(\mathcal{G}_{\mathfrak{p}})_{x}|\geq \dfrac{\mathcal{N}_{K}(\mathfrak{p})^{d-h-1}}{B}|(\mathcal{G}_{\mathfrak{p}})_{x}({\bf b}_{1})|,$$
thus
\begin{equation}
|\mathcal{B}_{1}|\geq \dfrac{\mathcal{N}_{K}(\mathfrak{p})^{d-h-1}}{B}|\mathcal{G}_{\mathfrak{p}}({\bf b}_{1})|\geq \dfrac{1}{B_{1}B}|\mathcal{G}_{\mathfrak{p}}(a)|.
\label{4.4}
\end{equation}
Note that since $|\mathcal{G}_{\mathfrak{p}}(a)|\geq |\mathcal{B}_{1}|$ and $|[\mathcal{G}_{\mathfrak{p}}(a)]_{\mathfrak{p}}|\leq B_{1}\mathcal{N}_{K}(\mathfrak{p})^{d-h-1}$, by the first inequality of \eqref{4.4} and the pigeonhole principle we may find another residue class ${\bf b}_{2}\in [\mathcal{G}_{\mathfrak{p}}(a)]_{\mathfrak{p}}$ with
\begin{align*}
|\mathcal{G}_{\mathfrak{p}}({\bf b}_{2})|&\geq \dfrac{1}{B_{1}\mathcal{N}_{K}(\mathfrak{p})^{d-h-1}}|\mathcal{G}_{\mathfrak{p}}(a)\backslash \mathcal{G}_{\mathfrak{p}}({\bf b}_{1})|\\
&\geq \dfrac{1}{B_{1}\mathcal{N}_{K}(\mathfrak{p})^{d-h-1}}\left( 1-\dfrac{B}{\mathcal{N}_{K}(\mathfrak{p})^{d-h-1}} \right)|\mathcal{G}_{\mathfrak{p}}(a)|,
\end{align*}
which is at least $\frac{|\mathcal{G}_{\mathfrak{p}}(a)|}{2B_{1}\mathcal{N}_{K}(\mathfrak{p})^{d-h-1}}$ if $\mathcal{N}_{K}(\mathfrak{p})^{d-h-1}>2B$. In such case, if we define $\mathcal{B}_{2}$ as in \eqref{4.33}, the same reasoning that gives \eqref{4.4}  implies $|\mathcal{B}_{2}|\geq \frac{1}{2B_{1}B}|\mathcal{G}_{\mathfrak{p}}(a)|$. Iterating this process we obtain a sequence ${\bf b}:=\{{\bf b}_{1},\ldots ,{\bf b}_{q}\}$ of residue classes, $q=\left\lceil \frac{\mathcal{N}_{K}(\mathfrak{p})^{d-h-1}}{2B}\right\rceil$, satisfying
\begin{align*}
|\mathcal{G}_{p}({\bf b}_{j})|&\geq \dfrac{1}{B_{1}\mathcal{N}(p)^{d-h-1}}\left |\mathcal{G}_{p}(a)\backslash \bigcup_{i=1}^{j-1}\mathcal{G}_{p}({\bf b}_{i})
\right |\\
&\geq \dfrac{1}{B_{1}\mathcal{N}(p)^{d-h-1}}\left(1-\dfrac{(q-1)B}{\mathcal{N}(p)^{d-h-1}}\right)|\mathcal{G}_{p}(a)|,
\end{align*}
and $|\mathcal{B}_{j}|\geq \frac{1}{2B_{1}B}|\mathcal{G}_{\mathfrak{p}}(a)|$. In particular,
\begin{equation}
\displaystyle \sum_{j=1}^{q}|\mathcal{B}_{j}|\geq \dfrac{q}{2B_{1}B}|\mathcal{G}_{\mathfrak{p}}(a)|.
\label{4.5}
\end{equation} 
Now, consider the set
$$\mathcal{B}[a]:=\left \{s\in \mathcal{G}_{\mathfrak{p}}(a):\sum_{j=1}^{q}1_{s\in \mathcal{B}_{j}}\geq \dfrac{q}{4B_{1}B}\right \}.$$

Note that $\mathcal{B}[a]_{x}=\mathcal{B}[a]\cap \pi_{1}^{-1}(x)$ equals to $(\mathcal{G}_{\mathfrak{p}})_{x}$ whenever this intersection is nonempty. Additionally, \eqref{4.5} implies 
\begin{align}
\dfrac{q}{2B_{1}B}|\mathcal{G}_{p}(a)|& \leq \displaystyle \sum_{j=1}^{q}|\mathcal{B}_{j}|=\sum_{j=1}^{q}\sum_{s}1_{s\in \mathcal{B}_{j}} \\ 
&=\sum_{s\in \mathcal{B}[a]}\sum_{j=1}^{q}1_{s\in \mathcal{B}_{j}}+\sum_{s\notin \mathcal{B}[a]}\sum_{j=1}^{q}1_{s\in \mathcal{B}_{j}}\nonumber \\  
& \leq \sum_{s\in \mathcal{B}[a]}q+\sum_{s\notin \mathcal{B}[a]}\dfrac{q}{4B_{1}B}\nonumber\\
&\leq q\left(|\mathcal{B}[a]|+\dfrac{1}{4B_{1}B}|\mathcal{G}_{p}(a)|\right),\nonumber
\end{align}
from which we deduce
\begin{equation}
|\mathcal{B}[a]|\geq \dfrac{1}{4B_{1}B}|\mathcal{G}_{\mathfrak{p}}(a)|.
\label{4.50}
\end{equation}
\begin{claim}\label{claim 513}For each $x$ such that $\mathcal{B}[a]_{x}\neq \emptyset$, there are at least $\frac{|\mathcal{G}_{\mathfrak{p}}(a)|}{(4B_{1}B)^{2}}$ elements $s\in \mathcal{G}_{\mathfrak{p}}(a)$ such that $s\equiv {\bf y}(\text{mod} \ \mathfrak{p})$ for some ${\bf y}\in \mathcal{B}[a]_{x}$.
\end{claim}
\begin{proof}[Proof of Claim \ref{claim 513}.]
 Let $x$ such that $\mathcal{B}[a]_{x}\neq \emptyset$.
 Thus, $\mathcal{B}[a]_{x}=(\mathcal{G}_{\mathfrak{p}})_{x}$ and by definition of $\mathcal{B}[a]$, it holds that $(\mathcal{G}_{\mathfrak{p}})_{x}\subseteq \mathcal{B}_{j}$ for at least $\frac{q}{4B_{1}B}$ values of $j$. Now, fix any such $j$. Then by definition of $\mathcal{B}_{j}$, there exists $s_{j}\in \mathcal{G}_{\mathfrak{p}}(a)$ such that $(\mathcal{G}_{\mathfrak{p}})_{x}=(\mathcal{G}_{\mathfrak{p}})_{\pi_{1}(s_{j})}$ and $s_{j}\equiv {\bf b}_{j}(\text{mod} \ \mathfrak{p})$. Note that $s_{j}\in (\mathcal{G}_{\mathfrak{p}})_{x}$. Hence, for any $s\in \mathcal{G}_{\mathfrak{p}}({\bf b}_{j})$, it holds $s\equiv {\bf b}_{j}\equiv s_{j}(\text{mod} \ \mathfrak{p})$. We deduce that there are $|\mathcal{G}_{\mathfrak{p}}({\bf b}_{j})|\geq \frac{|\mathcal{G}_{\mathfrak{p}}(a)|}{2B_{1}\mathcal{N}_{K}(\mathfrak{p})^{d-h-1}}$ elements $s\in \mathcal{G}_{\mathfrak{p}}(a)$ such that $s\equiv {\bf y}(\text{mod} \ \mathfrak{p})$ for some ${\bf y}\in \mathcal{G}_{\mathfrak{p}}(a)$. Now, the elements we constructed have residue class ${\bf b}_{j}$ modulo $\mathfrak{p}$. Since the residue classes ${\bf b}_{j}$ are all different, we conclude that there are at least $\frac{q}{4B_{1}B}\frac{|\mathcal{G}_{\mathfrak{p}}(a)|}{2B_{1}\mathcal{N}_{K}(\mathfrak{p})^{d-h-1}}\geq \frac{|\mathcal{G}_{\mathfrak{p}}(a)|}{(4B_{1}B)^{2}}$ elements $s\in \mathcal{G}_{\mathfrak{p}}(a)$ such that $s\equiv {\bf y}(\text{mod} \ \mathfrak{p})$ for some ${\bf y}\in \mathcal{B}[a]_{x}$.
\end{proof}
Now, let 
\begin{equation}\label {definicion de R}
\mathcal{R}:=\left\lbrace a\in [\pi_{1}(S^{\prime})]_{\mathfrak{p}}\subseteq \mathcal{O}_{K}/\mathfrak{p}:|\mathcal{G}_{\mathfrak{p}}(a)|\geq \dfrac{1}{2\mathcal{N}_{K}(\mathfrak{p})}|\mathcal{G}_{\mathfrak{p}}| \right\rbrace 
\end{equation}
and let us denote
$$\mathcal{B}[\mathfrak{p}]:=\left\{ s\in S':(S')_{\pi_{1}(s)}\cap \mathcal{B}[a]\neq \emptyset \text{ for some }a\in \mathcal{R} \right\}.$$
In other words, $\mathcal{B}[\mathfrak{p}]$ consists of those sections of $S'$ with non-trivial intersection with $\bigcup_{a\in \mathcal{R}}\mathcal{B}[a]$. In particular, since $\mathcal{B}[\mathfrak{p}]$ contains the disjoint union $\bigcup_{a\in \mathcal{R}}\mathcal{B}[a]$, from the definition of $\mathcal{R}$ we deduce:
\begin{claim}
The following bound holds
$$|\mathcal{B}[\mathfrak{p}]|\geq \dfrac{\beta c_{1}}{8B_{1}B}|S^\prime|.$$
\label{claim6}
\end{claim} 
\begin{proof}[Proof of Claim \ref{claim6}]
First note that by the pigeonhole principle, $\mathcal{R}\neq \emptyset$.
Now, if $\mathcal{R}=\{a_{1},\ldots ,a_{h}\}$,
$$|\mathcal{G}_{\mathfrak{p}}|=\sum_{a}|\mathcal{G}_{\mathfrak{p}}(a)|=\sum_{a\in \mathcal{R}}|\mathcal{G}_{\mathfrak{p}}(a)|+\sum_{a\notin \mathcal{R}}|\mathcal{G}_{\mathfrak{p}}(a)|<\sum_{a\in \mathcal{R}}|\mathcal{G}_{\mathfrak{p}}(a)|+\dfrac{\mathcal{N}_{K}(\mathfrak{p})-h}{2\mathcal{N}_{K}(\mathfrak{p})}|\mathcal{G}_{\mathfrak{p}}|,$$
thus
$$\sum_{a\in \mathcal{R}}|\mathcal{G}_{\mathfrak{p}}(a)|>\left( 1-\dfrac{\mathcal{N}_{K}(\mathfrak{p})-h}{2\mathcal{N}_{K}(\mathfrak{p})} \right)|\mathcal{G}_{\mathfrak{p}}|=\dfrac{\mathcal{N}_{K}(\mathfrak{p})+h}{2\mathcal{N}_{K}(\mathfrak{p})}|\mathcal{G}_{\mathfrak{p}}|>\dfrac{|\mathcal{G}_{\mathfrak{p}}|}{2}.$$
Combining this together with \eqref{cotaGpdeS} and \eqref{4.50} we get
$$|\mathcal{B}[\mathfrak{p}]|\geq \left| \bigcup_{a\in \mathcal{R}}\mathcal{B}[a] \right|=\sum_{a\in \mathcal{R}}|\mathcal{B}[a]|\geq \sum_{a\in \mathcal{R}}\dfrac{1}{4B_{1}B}|\mathcal{G}_{\mathfrak{p}}(a)|>\dfrac{1}{8B_{1}B}|\mathcal{G}_{\mathfrak{p}}|>\dfrac{\beta c_{1}}{8B_{1}B}|S^{\prime}|.$$\end{proof}

For an element $s\in S'$ denote $P''_{s}$ for the set of primes $\mathfrak{p}\in P'$ for which $s\in \mathcal{B}[\mathfrak{p}]$. 
\begin{claim}\label{claim 5.15}
\label{kappatres}
There exist constants $\kappa_{3}$ and $c_{3}$ such that the set
$$\mathcal{B}:=\left\{ s\in S':w(P''_{s})\geq \kappa_{3}w(P^{\prime})\right\}$$
satisfies $|\mathcal{B}|\geq c_{3}|S^{\prime}|$.
\end{claim}
\begin{proof}[Proof of Claim \ref{claim 5.15}.]On the one hand we have
$$\sum_{\mathfrak{p}\in P'}\sum_{s\in S'}1_{s\in \mathcal{B}[\mathfrak{p}]}\dfrac{\log(\mathcal{N}_{K}(\mathfrak{p}))}{\mathcal{N}_{K}(\mathfrak{p})}=\sum_{\mathfrak{p}\in P'}|\mathcal{B}[\mathfrak{p}]|\dfrac{\log(\mathcal{N}_{K}(\mathfrak{p}))}{\mathcal{N}_{K}(\mathfrak{p})}\geq \dfrac{\beta c_{1}}{8B_{1}B}|S^{\prime}|w(P').
$$
 On the other hand,
\begin{align*}
\sum_{s\in S'}\sum_{\mathfrak{p}\in P'}1_{s\in \mathcal{B}[\mathfrak{p}]}\dfrac{\log(\mathcal{N}_{K}(\mathfrak{p}))}{\mathcal{N}_{K}(\mathfrak{p})}& =\sum_{s\in S'}\sum_{\mathfrak{p}\in P''_{s}}\dfrac{\log(\mathcal{N}_{K}(\mathfrak{p}))}{\mathcal{N}_{K}(\mathfrak{p})}\\
&=\sum_{s\in S'}w(P''_{s})\\
&=\sum_{s\in \mathcal{B}}w(P''_{s})+\sum_{s\notin \mathcal{B}}w(P''_{s})\\
&\leq |\mathcal{B}|w(P')+\kappa_{3}|S'|w(P^{\prime}),
\end{align*}
because $P''_{s}\subseteq P'$. Hence 
$\left( \dfrac{\beta c_{1}}{8B_{1}B}-\kappa_{3}\right) |S^{\prime}|\leq |\mathcal{B}|.$
Thus, it is enough to take
\begin{equation}
\kappa_{3}=c_3:=\dfrac{\beta c_{1}}{16B_{1}B}.
\label{kappa3}
\end{equation}
\end{proof}

Now, let us check that $\mathcal{B}$ satisfies the condition of Lemma \ref{lemma 4.1}. Let $x$ be such that $\mathcal{B}_{x}\neq \emptyset$. 
Observe that if $s_1,s_2\in \pi^{-1}_{1}(x)$, then $P''_{s_1}=P''_{s_2}$. It follows that  $P''_x:=P''_{s}$ with $s\in \pi^{-1}_{1}(x)$ is well defined. 
From this we conclude that  each nonempty section $\mathcal{B}_{x}$ of $\mathcal{B}$ is equal to $S'_x$.
Let $\mathfrak{p}\in P''_{x}$,
by construction, $\mathcal{B}[a]_{x}\subseteq \mathcal{B}_{x}$ for some $a\in \mathcal{R}$, where $\mathcal{R}$ was defined in \eqref{definicion de R}. Since there are at least $\frac{|\mathcal{G}_{\mathfrak{p}}(a)|}{(4B_{1}B)^{2}}$ elements $s\in \mathcal{G}_{\mathfrak{p}}(a)$ such that $s\equiv {\bf y}(\text{mod} \ \mathfrak{p})$ for some ${\bf y}\in \mathcal{B}[a]_{x}$, we conclude
\begin{align*}
\left| \left\{ s\in S':[s]_{\mathfrak{p}}\in [\mathcal{B}_{x}]_{\mathfrak{p}} \right\} \right|&\geq \dfrac{|\mathcal{G}_{\mathfrak{p}}(a)|}{(4B_{1}B)^{2}}\\
&\geq \dfrac{1}{2^{5}(B_{1}B)^{2}}\dfrac{1}{\mathcal{N}_{K}(\mathfrak{p})}|\mathcal{G}_{\mathfrak{p}}|\\
&\geq \dfrac{\beta c_1}{2^{5}(B_{1}B)^{2}}
\dfrac{|S^{\prime}|}{\mathcal{N}_{K}(\mathfrak{p})}\\
&=c_{4}\dfrac{|S^{\prime}|}{\mathcal{N}_{K}(\mathfrak{p})},
\end{align*}
where the second inequality is true because $a\in\mathcal R$, the third inequality is true due to  \eqref{cotaGpdeS}, and
\begin{equation}
c_{4}:=\dfrac{\beta c_1}{2^{5}(B_{1}B)^{2}}.
\label{c4}
\end{equation}
This finishes the proof of Lemma \ref{lemma 4.1}.
\end{proof}
In order to conclude the proof of Proposition \ref{proposition 2.2} we are going to show that if an $r$-polynomial vanishes at the sections $\mathcal{B}_{x}$ for $\gtrsim_{r,d,h,\varepsilon,\kappa,K}1$ values of $x$, then it should vanish at a positive proportion of $S$. To that end, we will first find $m=O_{r,d,h,\varepsilon,\kappa,K}(1)$ large enough and $S'_{x_{1}},\ldots ,S'_{x_{m}}$, $m$ different sections of $S'$ having non-trivial intersection with $\mathcal{B}$, and such that for many 
$s\in S'$ there are many primes $\mathfrak{p}\in\mathcal{P}(Q)$ for which there exists $s_j\in S'_{x_{j}}$ for some $1\leq j\leq m$ such that 
$s_j\equiv s (\text{mod} \ \mathfrak{p})$.

Recall that any nonempty section  $\mathcal B_x$ of $\mathcal B$ is equal to  $S'_x$. Since $S'\subseteq S_4\subseteq S_3\subseteq S_2$, 
it follows by \eqref{4.2} that $|\mathcal{B}_x|=|S'_x|\leq \dfrac{|S|}{Q}$, for all $x\in\pi_1(\mathcal {B})$. 
On the other hand, by Lemma \ref{lemma 4.1} and the inequality \eqref{V'}, we get that
 $|\mathcal{B}|\geq c_3\dfrac{\delta_0}{2^{3d+2}}|S|$. It follows that there are at least $\dfrac{c_3\delta_0}{2^{3d+2}}Q$ nonempty sections of $\mathcal {B}$.

Denote $\mathcal{L}:=\{S'_{x_{1}},\ldots ,S'_{x_{m}}\}$  a choice of $m$ sections of $S'$ such that $S'_{x_{i}}\cap \mathcal{B}\neq \emptyset$ for all $i$. Let $P_{\mathcal{L}}$ be the set of primes $\mathfrak{p}$ in $\mathcal{P}(Q)$  for which there exist a pair of sections $S'_{x_{i}}\neq S'_{x_{j}}$ in $\mathcal{L}$ such that $[S'_{x_{i}}]_{\mathfrak{p}}\cap [S'_{x_{j}}]_{\mathfrak{p}}\neq \emptyset$. Given such a pair of sections, the fact that $[S'_{x_{i}}]_{\mathfrak{p}}\cap [S'_{x_{j}}]_{\mathfrak{p}}\neq \emptyset$ implies $x_{i}\equiv x_{j}(\text{mod} \ \mathfrak{p})$. Since $x_{i}\neq x_{j}$ and are both in $[N]_{\mathcal O_{K}}$, by \eqref{height2} and \eqref{norm}, it follows that  
$\mathcal{N}_{K}(x_{i}-x_{j})\leq N^{3}$ when $N\geq2^{d_K}$.
Then
\begin{equation*}\begin{split}
\displaystyle \sum_{\mathfrak{p}\in P_{\mathcal{L}}}\log(\mathcal{N}_{K}(\mathfrak{p}))&\leq\sum_{\{i,j\},i\neq j}\sum_{\mathfrak{p}|x_{i}-x_{j}}\log(\mathcal{N}_{K}(\mathfrak{p}))\\
&\leq \sum_{\{i,j\},i\neq j}\log(\mathcal{N}_{K}(x_{i}-x_{j}))\\
&\leq{m\choose 2}3\log(N).
\end{split}
\end{equation*}
By a standard application of partial summation, this implies that 
\begin{equation}w(P_{\mathcal{L}})\lesssim_{r,d,h,\varepsilon, \kappa,K}\log(\log(N)).
\label{4.8}
\end{equation}
Now, we consider in $S'$ the function
$$\Psi_{\mathcal{L}}(s):=\sum_{\mathfrak{p}\in \mathcal{P}(Q)}1_{\exists {\bf x}\in \mathcal{L}:s\equiv {\bf x}(\text{mod} \ \mathfrak{p})}\log(\mathcal{N}_{K}(\mathfrak{p})).$$
Note that $\Psi_{\mathcal{L}}(s)$ measures how much a residue class occupied by $s$ contains a representative in $\mathcal{L}$.  Keeping in mind that nonempty sections of $\mathcal B$ are nonempty sections of $S'$, from Lemma 
\ref{lemma 4.1}  we deduce that
\begin{align*}
\sum_{s\in S'}\Psi_{\mathcal{L}}(s)&\geq \sum_{i=1}^{m}\sum_{\mathfrak{p}\in P_{x_{i}}\backslash P_{\mathcal{L}}}\sum_{s\in S'}1_{\exists {\bf x}\in S'_{x_{i}}:s\equiv {\bf x}(\text{mod} \ \mathfrak{p})}\log(\mathcal{N}_{K}(\mathfrak{p}))\\
&\geq \sum_{i=1}^{m}\sum_{\mathfrak{p}\in P_{x_{i}}\backslash P_{\mathcal{L}}}c_{4}|S^{\prime}|\dfrac{\log(\mathcal{N}_{K}(\mathfrak{p}))}{\mathcal{N}_{K}(\mathfrak{p})}\\
&=c_{4}|S^{\prime}| \sum_{i=1}^{m}\left(w(P_{x_{i}})-w(P_{\mathcal{L}}) \right),
\end{align*}
where the first inequality is because for any prime  $\mathfrak{p}\in \bigcup_{i}P_{x_{i}}\backslash P_{\mathcal{L}}$ we have $[S'_{x_{i}}]_{\mathfrak{p}}\cap [S'_{x_{j}}]_{\mathfrak{p}}=\emptyset$ for any $i\neq j$, thus the conditions $\exists {\bf x}\in S'_{x_{i}}:s\equiv {\bf x}(\text{mod} \ \mathfrak{p})$ are pairwise disjoint. Since
$w(P_{x_{i}})\geq \kappa_{3}w(P')\geq \frac{\kappa_{3} \kappa_{1}}{4}w(P)\geq \frac{\kappa_{3} \kappa_{1}\kappa}{4}w(\mathcal{P}(Q))$, inequality \eqref{4.8} implies that
\begin{align}\label{any L}
\sum_{s\in S'}\Psi_{\mathcal{L}}(s)&\geq mc_{4}|S^{\prime}|\left(\frac{\kappa_{3} \kappa_{1}\kappa}{4}w(\mathcal{P}(Q))+O_{r,d,h,\varepsilon,\kappa,K}(\log(\log(N)))\right)\\
&\geq \dfrac{1}{2}mc_{4}|S^{\prime}|\frac{\kappa_{3} \kappa_{1}\kappa}{4}w(\mathcal{P}(Q))\nonumber\\
&\geq mc_{4}\dfrac{\delta_{0}}{2^{3d+2}}|S|\frac{\kappa_{3} \kappa_{1}\kappa}{8}w(\mathcal {P}(Q))\nonumber\\ 
&\geq c_{5}m|S|\log(N),\nonumber
\end{align}
for $N$ large enough. Here,
\begin{equation}\label{c6}
c_{5}=\frac{\kappa}{2^{3d+6}}c_{2,K}\dfrac{\varepsilon}{d}c_{4}\kappa_{1} \kappa_{3}\delta_{0}.
\end{equation}

We will now bound $\sum_{s\in S'}\Psi_{\mathcal{L}}(s)$ from above.
In order to achieve that, first note that for $s$ in $\mathcal{L}$, \eqref{4.2} implies that
\begin{align}\label{in L}
\displaystyle \sum_{s\in \mathcal{L}}\Psi_{\mathcal{L}}(s)&=\sum_{s\in \mathcal{L}}\sum_{\mathfrak{p}\in \mathcal{P}(Q)}1_{\exists {\bf x}\in \mathcal{L}:s\equiv {\bf x}(\text{mod} \ \mathfrak{p})}\log(\mathcal{N}_{K}(\mathfrak{p}))\\
&=\sum_{s\in \mathcal{L}}\sum_{\mathfrak{p}\in \mathcal{P}(Q)}\log(\mathcal{N}_{K}(\mathfrak{p}))\nonumber\\
&\leq c_{4,K}Q|\mathcal{L}|\nonumber\\
&\leq c_{4,K}m|S|.\nonumber
\end{align}
Also note that if $s\notin \mathcal{L}$, then 
\begin{align}\label{not in L11}
\Psi_{\mathcal{L}}(s)&=\displaystyle \sum_{\mathfrak{p}\in \mathcal{P}(Q)}1_{\exists {\bf x}\in \mathcal{L}:s\equiv {\bf x}(\text{mod} \ \mathfrak{p})}\log(\mathcal{N}_{K}(\mathfrak{p}))\\
&\leq \sum_{\mathfrak{p}\in \mathcal{P}(Q)}\sum_{i=1}^{m}1_{\exists {\bf x}\in S'_{x_{i}}:s\equiv {\bf x}(\text{mod} \ \mathfrak{p})}\log(\mathcal{N}_{K}(\mathfrak{p})) \nonumber\\ 
&=\sum_{i=1}^{m}\sum_{\mathfrak{p}\in \mathcal{P}(Q)}1_{\exists {\bf x}\in S'_{x_{i}}:s\equiv {\bf x}(\text{mod} \ \mathfrak{p})}\log(\mathcal{N}_{K}(\mathfrak{p})) \nonumber\\
&\leq \sum_{i=1}^{m}\sum_{\mathfrak{p}|\pi_{1}(s)-x_{i}}\log(\mathcal{N}_{K}(\mathfrak{p}))\nonumber\\
&\leq \sum_{i=1}^{m}\log(\mathcal{N}_{K}(\pi_{1}(s)-x_{i})).\nonumber
\end{align}
By \eqref{height2}, \eqref{norm} and the fact that $\pi_{1}(s),x_{i}\in [N]_{\mathcal{O}_{K}}$, we have $\mathcal{N}_{K}(\pi_{1}(s)-x_{i})\leq N^{3}$ when $N\geq2^{d_K}$, thus from \eqref{not in L11} we deduce that for $s\notin \mathcal{L}$ it holds
\begin{equation}
\Psi_{\mathcal{L}}(s)\leq 3m\log(N).
\label{not in L1} 
\end{equation}
Let $\delta_{1}:=\dfrac{c_{5}}{4}$, and suppose that the set 
$$L:=\{s\in S':\Psi_{\mathcal{L}}(s)\geq \gamma\}$$
has  size at most $\delta_{1}|S|$.
Then 
\begin{align}\label{cota superior de suma de Psi}
\sum_{s\in {S'}}\Psi_{\mathcal{L}}(s)&
=\sum_{\substack{s\in {S'}\\ \Psi_{\mathcal{L}}(s)<\gamma}}\Psi_{\mathcal{L}}(s)+
\sum_{\substack{s\in \mathcal{L}\\ \Psi_{\mathcal{L}}(s)\geq\gamma}}\Psi_{\mathcal{L}}(s)+
\sum_{\substack{s\notin \mathcal{L}\\ \Psi_{\mathcal{L}}(s)\geq\gamma}}\Psi_{\mathcal{L}}(s)
\\
&\leq \sum_{\substack{s\in {S'}\\ \Psi_{\mathcal{L}}(s)<\gamma}}\Psi_{\mathcal{L}}(s)
+\sum_{s\in \mathcal{L}}\Psi_{\mathcal{L}}(s)+
\sum_{\substack{{s\in S'}\\ \gamma\leq \Psi_{\mathcal{L}}(s)\leq 3m\log(N)}}\Psi_{\mathcal{L}}(s)
\nonumber\\
&<\gamma|S|+c_{4,K}m|S|+\delta_1|S|3m\log(N).\nonumber
\end{align}
If  we now set $\gamma:=3rd_{K}\log(N)$, combining \eqref{any L} with \eqref{cota superior de suma de Psi} we get that
$$4\delta_1m\log(N)<3rd_{K}\log(N)+c_{4,K}m+3\delta_1m\log(N),$$
and then
$$m\left(\delta_1-\frac{c_{4,K}}{\log N}\right )< 3rd_{K}.$$
So, if we take $m:=\dfrac{4rd_{K}}{\delta_1}$ we reach a contradiction when $N>\exp\left(\dfrac{4c_{4,K}}{\delta_{1}}\right)$.
We conclude that for $N$ sufficiently large, the set
$$L:=\{s\in S':\Psi_{\mathcal{L}}(s)\geq 3rd_{K}\log(N)\}$$
has size $|L|\geq \delta_{1}|S|$ for our choices of $m$ and $\delta_{1}$.
Since 
$$\Psi_{\mathcal{L}}(s)=\log\left( \prod_{\substack{\mathfrak{p}\in \mathcal{P}(Q)\\ \exists {\bf x}\in \mathcal{L}:s\equiv {\bf x}(\text{mod} \ \mathfrak{p})}} \mathcal{N}_{K}(\mathfrak{p}) \right),$$
it follows that if $s\in L $ then $$\prod_{\substack{\mathfrak{p}\in \mathcal{P}(Q)\\ \exists {\bf x}\in \mathcal{L}:s\equiv {\bf x}(\text{mod} \ \mathfrak{p})}} \mathcal{N}_{K}(\mathfrak{p}) \geq N^{3rd_{K}}.$$

Now, let us see that if an $r$-polynomial vanishes at $\mathcal{L}$, then it must vanish at $L$. Indeed, let $f$ be such a polynomial and let 
$s\in L$. 
If $\mathfrak{p}\in\mathcal{P}(Q)$ is a prime such that there exists ${\bf x}\in \mathcal{L}$ with $s\equiv {\bf x} (\text{mod} \ \mathfrak{p})$, the fact that $f({\bf x})=0$ implies that $\mathfrak{p}|f(s)$. By Definition \ref{def of r-poly}, $H_{K}(f({\bf y}))<N^{3rd_{K}}$ for all ${\bf y}\in [N]_{\mathcal{O}_{K}}^{d}$. Hence if $f(s)\neq 0$ it holds that
$$N^{3rd_{K}}\leq \prod_{\substack{\mathfrak{p}\in \mathcal{P}(Q)\\ \exists {\bf x}\in \mathcal{L}:s\equiv {\bf x}(\text{mod} \ \mathfrak{p})}} \mathcal{N}(\mathfrak{p}) \leq \prod_{\mathfrak{p}|f(s)}\mathcal{N}_{K}(\mathfrak{p})\leq \mathcal{N}_{K}(f(s))\leq H_{K}(f(s))<N^{3rd_{K}},$$
which is absurd. Thus $f(s)=0$.

By the construction of $S'$ given in \eqref{definicion de S prima} and by \eqref{r1 caracteristico}, we know that each section $S'_{x_{i}}\in \mathcal{L}$ contains an $(r,1)$-characteristic subset $A(x_i)$, of size at most $c_{2}r^{d-h-1}$. Taking $A$ equal the union of these $m$ subsets we obtain an $(r,\delta_{1})$-characteristic subset for $S$, of size at most $\frac{m}{r}c_{2}r^{d-h}=\frac{4d_{K}}{\delta_{1}}c_{2}r^{d-h}$.
\end{proof}

\subsection{Construction of a polynomial of low complexity and conclusion of the proof}
\label{4.3}

Having constructed a characteristic subset $A\subseteq S$, the last step to prove Theorem \ref{theorem 2.4} is to construct a small polynomial vanishing at $A$. This will be done by using the following variant of Siegel's lemma.

\begin{lemma}[Lemma 3.6 in {\cite{P}}]
Let $K$ be a global field of degree $d_{K}$. Let $(a_{ij})_{i,j}$, $1\leq i\leq s$, $1\leq j\leq t$ be elements of $\mathcal{O}_{K}$ with $H(a_{ij})\leq C$ for all $i,j$. Let us suppose that $t>2d_{K}^{2}s$. Then, there exists ${\bf c}=(c_{1},\ldots ,c_{t})\in \mathcal{O}_{K}^{t}\backslash \{0\}$, such that
$$H_{K}(1:{\bf c})\leq c_{6,K}\left( tC\right)^{\frac{8d_{K}^{2}s}{t-2d_{K}s}}$$
and
$$\sum_{j=1}^{t}c_{j}a_{ij}=0\text{ for all }1\leq i\leq s.$$  
\label{siegel}
\end{lemma}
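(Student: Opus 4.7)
The proof will follow the classical pigeonhole argument of Siegel, adapted to the global field setting by combining the height inequalities of section \ref{section 2} with the counting estimate of Proposition \ref{S-integers bound}. Fix a parameter $B > 0$ to be chosen later, and consider the input box $I := \{\mathbf{x} \in \mathcal{O}_K^t : H_K(x_j) \leq B \text{ for all } j\}$ together with the $\mathcal{O}_K$-linear map $T\colon \mathcal{O}_K^t \to \mathcal{O}_K^s$ given by $T(\mathbf{x})_i = \sum_{j=1}^t a_{ij} x_j$. The aim is to produce a collision $T(\mathbf{x}) = T(\mathbf{x}')$ via pigeonhole, so that $\mathbf{c} := \mathbf{x} - \mathbf{x}'$ is a nonzero element of the kernel and hence the desired solution.

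First I would lower-bound $|I|$ by $(c_0(K) B)^t$ for $B$ sufficiently large: over number fields this follows from Minkowski's convex body theorem applied coordinate-wise, while over function fields one fixes an $\mathbb{F}_q[T]$-basis of $\mathcal{O}_K$ and enumerates polynomials of bounded degree in each coordinate. For the upper bound on $|T(I)|$, applying \eqref{polynomialheight2} coordinate-wise (with $R \leq t$ nonzero coefficients of absolute height at most $C$ and degree one), together with the projective bound $H_K(1:\mathbf{x}) \leq \max_j H_K(x_j)^{d_K}$, yields $H_K(T(\mathbf{x})_i) \leq M$ for every $\mathbf{x}\in I$ and every coordinate $i$, where $M := t^{d_K} C^{d_K^2} B^{d_K}$. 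Proposition \ref{S-integers bound} then gives $|T(I)| \leq [c''(K)\, M\, (\log M)^{d_K}]^s$. The pigeonhole principle produces the desired collision as soon as $|I| > |T(I)|$, which after taking logarithms reduces to a linear inequality in $\log B$; choosing $B$ of the form $(tC)^{\gamma}$ with $\gamma$ of order $s d_K/(t - 2d_K s)$ arranges this, the threshold $t > 2 d_K^2 s$ being precisely what is needed to absorb the logarithmic factors in the count.

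Having produced $\mathbf{c} \neq 0$ in the kernel of $T$, the subadditive inequality \eqref{height2} yields $H_K(c_j) \leq 2^{d_K} B^2$ for each $j$, and the projective-to-affine bound gives $H_K(1:\mathbf{c}) \leq (2^{d_K} B^2)^{d_K}$; substituting the chosen value of $B$ and collecting constants into $c_{6,K}$ produces the claimed exponent $8 d_K^2 s/(t - 2 d_K s)$. The main technical challenge is the careful bookkeeping of the various $d_K$-powers arising both from the projective height inequalities and from the logarithmic factor in Proposition \ref{S-integers bound}; these are precisely what force the threshold on $t$ and determine the precise exponent in the final bound. A secondary difficulty, specific to the function field setting, is establishing the lower bound on $|I|$ without appealing to Minkowski's theorem, for which a direct enumeration via an $\mathbb{F}_q[T]$-basis is needed.
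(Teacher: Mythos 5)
The paper does not prove this lemma at all: it is stated as ``Lemma~3.6 in~\cite{P}'' and cited from that external reference, so there is no in-text proof to compare your argument against. What can be assessed is whether your pigeonhole sketch delivers the stated bound, and here the bookkeeping, as written, falls short.

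Your strategy is the right one (the classical Siegel pigeonhole: a box $I$ in $\mathcal{O}_K^t$ pushed through the linear map $T$ into $\mathcal{O}_K^s$, collide, subtract), and the auxiliary ingredients you invoke --- the height inequality \eqref{polynomialheight2}, the projective-to-affine bound \eqref{height1}, and Proposition~\ref{S-integers bound} --- are appropriate. The problem is the power of $d_K$ that emerges. Because the hypothesis controls the \emph{absolute} height of the coefficients ($H(a_{ij})\leq C$, hence $H_K(a_{ij})\leq C^{d_K}$), the bound \eqref{height1} only gives $H_K(1:\mathbf{a}_i)\leq C^{d_K|M_{K,\infty}|}\leq C^{d_K^2}$, so your $M$ correctly picks up a $C^{d_K^2}$. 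Running the pigeonhole $B^{t-d_Ks}\gtrsim t^{d_Ks}C^{d_K^2 s}(\log M)^{d_Ks}$ (absorbing the logarithmic factor into a further $B^{d_Ks}$, as you indicate, and thereby shrinking the exponent to $t-2d_Ks$) forces $\log B\gtrsim \dfrac{d_K^2 s\,\log(tC)}{t-2d_Ks}$. Your final height estimate $H_K(1:\mathbf{c})\leq (2^{d_K}B^2)^{d_K}$ then yields an exponent of order $\dfrac{2d_K^3 s}{t-2d_Ks}$, not $\dfrac{8d_K^2 s}{t-2d_Ks}$. For $d_K\geq 5$ this is genuinely larger than the claimed bound, so ``collecting constants into $c_{6,K}$'' cannot close the gap: the extra $d_K$ is in the exponent of $tC$, not a multiplicative constant. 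Even the sharper direct estimate $H_K(1:\mathbf{c})\leq(2B)^{d_K}$ (which follows because each conjugate $|\sigma(c_j)|\leq 2B$; your route via \eqref{height2} and then \eqref{height1} is lossier) still leaves $\dfrac{d_K^3 s}{t-2d_Ks}$ for the $\log C$ contribution, so the mismatch persists. You should either identify where a factor of $d_K$ can be saved in the treatment of the coefficient heights (the culprit is the step $H_K(1:\mathbf{a}_i)\leq C^{d_K^2}$, which is sharp in the worst case) or check whether the cited statement uses a normalization in which the input heights are relative rather than absolute, in which case the exponent you obtain would match.

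A minor further point: for the lower bound $|I|\gtrsim_K B^t$ one should use the inscribed box $\{x\in\mathcal{O}_K:|\sigma(x)|\leq B^{1/d_K}\ \forall\sigma\}$ rather than apply a convexity argument directly to $\{H_K(x)\leq B\}$, which is not convex; this is consistent with what you say but worth making explicit.
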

\begin{proof}[Proof of Theorem \ref{theorem 2.4}]
The proof of the theorem is exactly the same as the proof of Theorem 2.4 in \cite{Walsh2}, replacing the classical Siegel's lemma and Proposition 2.2 in \cite{Walsh2}  by Lemma \ref{siegel} and Proposition \ref{proposition 2.2}. We sketch the details.  First, it is enough to prove Theorem \ref{theorem 2.4} for $\eta=1-\delta$; the general case follows by a simple partitioning argument. Thus, we have 
$S\subseteq [N]_{\mathcal{O}_{K}}^{d}$  of size $|S|\geq cN^{d-h-1+\varepsilon}$ occupying at most $\alpha \mathcal{N}_{K}(\mathfrak{p})^{d-h}$ residue classes modulo $\mathfrak{p}$ for every prime $\mathfrak{p}\in P$.
By Proposition \ref{proposition 2.2} there exists a subset $A\subseteq S$ of size $|A|\leq c_{2}r^{d-h}$ which is $(r,\delta)$-characteristic for $S$, provided that $N$ is large enough. Recall that the number of monomials in $d$ variables of degree equal to $r$ is equal to ${r+d-1\choose d-1}$. Consider the system of $|A|$-linear equations in ${r+d-1\choose d-1}$ variables given by
$$\sum_{{\bf i}=(i_{1},\ldots, i_{d})}\beta_{{\bf i}}{\bf a}^{{\bf i}}=0\text{ for all }{\bf a}\in A,$$
where we are using the multi-index notation ${\bf a}^{{\bf i}}=a_{1}^{i_{1}}\ldots a_{d}^{i_{d}}$ for ${\bf a}=(a_{1},\ldots ,a_{d})$ and we are summing over the ${\bf i}$'s with $i_{1}+\ldots +i_{d}=r$. Note that $H_{K}({\bf a}^{{\bf i}})\leq N^{r}$. Now choose $r$ large enough such that
\begin{equation}
{r+d-1\choose d-1}>18d_{K}^{2}|A|,
\label{pie}
\end{equation}
namely, since ${r+d-1\choose d-1}\geq \frac{r^{d-1}}{(d-1)!}$ and $|A|\leq c_{2}r^{d-h}$ it is enough to choose
\begin{equation}
\dfrac{r^{d-1}}{(d-1)!}>18d_{K}^{2}c_{2}r^{d-h},\text{ i.e. }r>\left( 18d_{K}^{2}c_{2}(d-1)!\right)^{\frac{1}{h-1}}.
\end{equation}
\begin{remark*}
At this stage is the only place when we require that $h>1$.
\end{remark*}
 Note that \eqref{pie} implies
\begin{equation}
\dfrac{16d_{K}|A|}{{r+d-1\choose d-1}-2d_{K}|A|}<1.
\label{pie2}
\end{equation}
By Lemma \ref{siegel} and \eqref{pie2} there exists a solution $\left(\beta_{{\bf i}}\right)\in \mathcal{O}_{K}^{{r+d-1\choose d-1}}\backslash \{0\}$ satisfying
$$H_{K}(1:(\beta_{{\bf i}}))\leq c_{6,K}\left[{r+d-1\choose d-1}N^{r}\right]^{\frac{8d_{K}^{2}|A|}{{r+d-1\choose d-1}-2d_{K}|A|}}\leq c_{6,K}{r+d-1\choose d-1}^{\frac{d_{K}}{2}}N^{\frac{rd_{K}}{2}}\leq N^{rd_{K}}$$
provided $N$ is large enough such that ${r+d-1\choose d-1}\leq N^{r}$. Consider now the non-zero homogeneous polynomial $f=\sum_{{\bf i}}\beta_{{\bf i}}{\bf x}^{{\bf i}}$ of degree $r$. For any ${\bf x}\in [N]_{\mathcal{O}_{K}}^{d}$ \eqref{polynomialheight2} and \eqref{height1} imply that
$$H_{K}(f({\bf x}))\leq {r+d-1\choose d-1}^{d_{K}}H_{K}(1:(\beta_{{\bf i}}))H_{K}(1:{\bf x})^{r}<{r+d-1\choose d-1}^{d_{K}}N^{2rd_{K}}<N^{3rd_{K}}$$
for $N$ large enough. Thus, by definition \ref{def of r-poly}, $f$ is a non-zero homogeneous $r$-polynomial of degree $r$ that vanishes at $A$. This concludes the proof. In particular, we can choose
$r=\left\lceil\left( 18c_{2}d_{K}^{2}(d-1)! \right)^{\frac{1}{h-1}}\right\rceil$.
\end{proof}

\begin{remark}
Let us note that while Theorem \ref{theorem 2.4} implies Theorem \ref{reduction2}, and hence Theorem \ref{basis3}, it does not imply Theorem \ref{basis2}. One reason is that Theorem \ref{theorem 2.4} holds for $0\leq k<d-1$, while Theorem \ref{basis2} is valid for $0\leq k<d$. Another reason is that the statement of Theorem \ref{basis2} is in terms of sets $S$ lying in $[N]_{\mathcal{O}_{K}}^{d}$. Instead, if in Theorem \ref{basis2} the set $S$ lies in $[N]_{\mathbb{A}^{d}(\mathcal{O}_{K})}$ and occupies at most $\alpha\mathcal{N}_{K}(\mathfrak{p})^{k}$ residue classes for every prime $\mathfrak{p}$, with $0\leq k<d-1$, then the conclusion of the theorem follows immediately from Theorem \ref{reduction2}. In order to prove Theorem \ref{basis2} as it is stated in the introduction,  let us observe the basic fact that the number of monomials of degree at most $r$ in $d$ variables is equal to ${r+d\choose d}$. Keeping in mind that Lemma \ref{lemma 3.4} and Proposition \ref{proposition 2.2} are valid for any $h\geq1$,   a very easy modification in the construction of the auxiliary polynomial presented in this section gives the following result.
\end{remark}
 \begin{theorem}
Let $d,h$ be positive integers and let $\varepsilon,\eta>0$ be positive real numbers. Let $K$ be a global field of degree $d_{K}$ and $\mathcal{O}_{K}$ be its ring of integers. Set $Q=N^{\frac{\varepsilon}{2d}}$ and let $P\subseteq \mathcal{P}(Q)$ satisfy $w(P)\geq \kappa\log(Q)$ for some $\kappa>0$. Suppose that $S\subseteq [N]_{\mathcal{O}_{K}}^{d}$ is a set of size $|S|\geq cN^{d-h-1+\varepsilon}$ occupying at most $\alpha \mathcal{N}_{K}(\mathfrak{p})^{d-h}$ residue classes modulo $\mathfrak{p}$ for every prime $\mathfrak{p}\in P$ and some $\alpha>0$. Then if $N$ is sufficiently large there exists a non-zero polynomial $f\in \mathcal{O}_{K}[X_{1},\ldots ,X_{d}]$ of degree $O_{d, h,\varepsilon,\eta, \kappa, K}(1)$ and coefficients of height bounded by $N^{O_{d,h,\varepsilon,\eta,\kappa,K}(1)}$ which vanishes at more than $(1-\eta)|S|$ points of $S$.
\label{theorem 2.4 prima}
\end{theorem}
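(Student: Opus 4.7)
The plan is to follow verbatim the proof of Theorem \ref{theorem 2.4}, modifying only the final step: instead of constructing a homogeneous polynomial of degree exactly $r$, we construct a polynomial of degree at most $r$. This single change removes the restriction $h>1$ that was required in Theorem \ref{theorem 2.4}.

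First, note that by \eqref{landau1} the hypothesis $w(P)\geq \kappa \log(Q)$ is equivalent, up to adjusting $\kappa$ by a constant depending on $K$, to $w(P)\geq \kappa' w(\mathcal{P}(Q))$. Consequently, Lemma \ref{lemma 3.4} and Proposition \ref{proposition 2.2} apply; as noted in the excerpt, both of these were established for arbitrary $h\geq 1$, the restriction $h>1$ in Theorem \ref{theorem 2.4} entering only at the Siegel's-lemma step. Applying Proposition \ref{proposition 2.2} with a sufficiently large $r$ to be chosen below, we obtain an $(r,\delta)$-characteristic subset $A\subseteq S$ of size $|A|\leq c_{2} r^{d-h}$, for some $\delta=\delta(d,h,\varepsilon,\kappa,\alpha,c,K)>0$.

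We then set up the linear system in the unknowns $\beta_{\bf i}\in\mathcal{O}_K$, indexed by multi-indices ${\bf i}=(i_1,\ldots,i_d)$ with $i_1+\cdots+i_d\leq r$:
\begin{equation*}
\sum_{{\bf i}:\,|{\bf i}|\leq r}\beta_{\bf i}\,{\bf a}^{\bf i}=0\qquad\text{for all }{\bf a}\in A.
\end{equation*}
The number of unknowns is $\binom{r+d}{d}\geq r^{d}/d!$, and each coefficient satisfies $H_K({\bf a}^{\bf i})\leq N^{r}$. We now require $\binom{r+d}{d}>18 d_K^{2}|A|$, which in view of $|A|\leq c_{2}r^{d-h}$ is ensured by $r^{h}>18 c_{2}d_K^{2}\,d!$, that is $r>(18 c_{2}d_K^{2}\,d!)^{1/h}$. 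This is feasible for every $h\geq 1$; by contrast, the homogeneous count $\binom{r+d-1}{d-1}\asymp r^{d-1}/(d-1)!$ used in Theorem \ref{theorem 2.4} forced $r^{h-1}\gtrsim 1$ and hence $h>1$. Applying Lemma \ref{siegel} (with $s=|A|$ and $t=\binom{r+d}{d}$), the inequality analogous to \eqref{pie2} yields a non-zero solution $(\beta_{\bf i})\in \mathcal{O}_K^{\binom{r+d}{d}}$ with $H_K(1:(\beta_{\bf i}))\leq N^{rd_K}$ for $N$ large enough.

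The polynomial $f=\sum_{\bf i}\beta_{\bf i}{\bf x}^{\bf i}$ has degree at most $r$ and, by \eqref{polynomialheight2} and \eqref{height1}, satisfies $H_K(f({\bf x}))<N^{3rd_K}$ for every ${\bf x}\in [N]_{\mathcal{O}_K}^{d}$ once $N$ is sufficiently large. Hence $f$ is an $r$-polynomial in the sense of Definition \ref{def of r-poly} that vanishes on $A$; by the $(r,\delta)$-characteristic property of $A$, it vanishes on a subset $L\subseteq S$ with $|L|\geq \delta|S|$. The general case of arbitrary $\eta>0$ then follows from the case $\eta=1-\delta$ by the same partitioning argument used at the end of the proof of Theorem \ref{theorem 2.4}. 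The main obstacle is entirely absorbed into Proposition \ref{proposition 2.2}; the only genuine change is the switch from counting monomials of degree exactly $r$ to counting monomials of degree at most $r$, which is precisely what buys us the case $h=1$.
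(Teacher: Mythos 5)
Your proposal is correct and follows exactly the route the paper intends: the authors' own justification of Theorem \ref{theorem 2.4 prima} is the closing remark that one should "observe the basic fact that the number of monomials of degree at most $r$ in $d$ variables is equal to ${r+d\choose d}$" and that "Lemma \ref{lemma 3.4} and Proposition \ref{proposition 2.2} are valid for any $h\geq 1$", and your write-up is a faithful unpacking of that remark, including the correct count $\binom{r+d}{d}\geq r^d/d!$, the resulting requirement $r>(18c_2d_K^2 d!)^{1/h}$ (feasible already for $h=1$), the application of Lemma \ref{siegel}, the verification that the resulting $f$ is an $r$-polynomial in the sense of Definition \ref{def of r-poly}, and the observation that the hypothesis $w(P)\geq\kappa\log Q$ is equivalent, via \eqref{landau1}, to $w(P)\geq\kappa' w(\mathcal P(Q))$.
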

Now, it is immediate that Theorem \ref{theorem 2.4 prima} implies an adequate version of Theorem \ref{reduction2}, from where we deduce Theorem \ref{basis2}.\\

As it can be seen from the construction of the homogenous polynomial by means of Siegel's lemma, in order to be able to take $r$ large, it is necessary that $h>1$, this leaves untouched the case $k=d-1$ in the statement of Theorem \ref{basis3}. This is because the characteristic subset has size $\lesssim r^{d-h}$ while the dimension of the space of homogeneous polynomials of degree $r$ in $d$ variables is $\sim r^{d-1}$, hence the matrix of the system is of size 
$c_1r^{d-h}\times c_2r^{d-1}$. It is tempting to think that this issue can be avoided if instead of reducing Theorem \ref{basis3} to Theorem \ref{reduction1} one attempts to prove a projective version of Proposition \ref{proposition 2.2}. In this case an obstacle in carrying on the induction is that a badly distributed set of size 
$N^{d-h+\varepsilon}$ in $\mathbb P^{d}(K)$ may have many sections and hence the size of each section is smaller than the desired size. Specifically, in equation \eqref{fact 4.5} of Claim \ref{claim3}, $|\pi_{1}(\overline{S})|$ can be of size $\sim N^2$ instead of size $\sim N^{1+\varepsilon}$, and hence the sections would be of size at least $\gtrsim N^{d-h-2+\varepsilon}$ instead of size at least $\gtrsim N^{d-h-1+\varepsilon}$. If $S$ happens to have small projections, namely that  $|\pi_{j}(\overline{S})|$ is of size $\lesssim N^{1+\varepsilon}$  for all $1\leq j\leq d$, then te same proof would work replacing the larger sieve for a projective version of it, like the one presented in \cite{Zywina}.\\

As a closing remark, we mention that Theorem \ref{basis1}, Theorem \ref{basis2}, Theorem \ref{theorem 2.4}, and Theorem \ref{theorem 2.4 prima} are also valid when 
$S\subseteq [N]_{\mathcal{O}_{K,S}}^{d}$ where, as in section $\S$\ref{section 2}, $\mathcal{O}_{K,S}$ denotes the rings of $S$-integers, for $S$ a finite subset of places containing $M_{K,\infty}$ when $K$ is a number field, and containing the distinguished place $v$ in the case when $K$ is a function field over $\mathbb{F}_q$.

\section{Appendix: effective estimates for heights over global fields}\label{appendix}

In this appendix we provide the proofs of Proposition \ref{serre} and \ref{S-integers bound}. 

In number fields and for $S=M_{K,\infty}$, Proposition \ref{serre} is proved in Chapter 13, Section 13.4  of \cite{Serre}. There, Serre also observed that the constant $c$ can be made effective. In order to prove Proposition \ref{serre}, we adapt Serre's proof to global fields and general rings of $S$-integers. Since all the results in this article are effective, we take the opportunity to fulfill Serre's observation by carrying on the extra work needed to make the dependence of $c$ on $K$ and $S$ explicit. 

\begin{proof}[Proof of Proposition \ref{serre}]
 The statement is trivial if $K=\mathbbm{k}=\mathbb{Q}$ and $S=\{\infty\}$ or $K=\mathbbm{k}=\mathbb{F}_{q}(T)$ and $S$ is the place corresponding to the infinite point in $\mathbb{P}^{1}(K)$: for any set of coordinates of ${\bf x}$, clean denominators and common factors. In this way we obtain coordinates $(x_{0}:\ldots :x_{d})$ such that $x_{i}\in \mathcal{O}_{\mathbbm{k}}$, $\text{gcd}(x_{0},\ldots ,x_{d})=1$ and
$$H_{K}(1:x_{0}:\ldots :x_{d})=H_{K}({\bf x}).$$
For the general case, it will be more convenient to first prove the bound in Proposition \ref{serre} for the absolute height. Let ${\bf x}\in \mathbb{P}^{d}(K)$ and choose coordinates $(x_{0},\ldots ,x_{d})$ with $x_{i}\in \mathcal{O}_{K,S}$. If $\mathfrak{a}_{x_{0},\ldots ,x_{d}}:=\sum_{i=0}^{d}x_{i}\mathcal{O}_{K,S}$ then 
$$H({\bf x})=\dfrac{1}{\mathcal{N}_{K}(\mathfrak{a}_{x_{0},\ldots ,x_{d}})^{\frac{1}{d_{K}}}}\prod_{v\in S}\max_{i}||x_{i}||_{v}.$$
Note that the ideal $\mathfrak{a}_{x_{0},\ldots ,x_{d}}$ depends on the coordinates, but its ideal class depends only on ${\bf x}$. Hence, if we take integral ideals $\mathfrak{a}_{1},\ldots ,\mathfrak{a}_{l}$ representing all the ideal classes of $\mathcal{O}_{K,S}$, satisfying Minkowski's bound $\mathcal{N}_{K}(\mathfrak{a}_{j})\lesssim_{K}1$ for all $j$ (when $K$ is a function field, this bound may be deduced from Section $\S$8.9. in Chapter V of \cite{Lorenzini}), it holds that $\mathfrak{a}_{x_{0},\ldots ,x_{d}}\mathfrak{a}_{j}^{-1}=\alpha \mathcal{O}_{K,S}$ for some $j$ and some $\alpha\in K^{\times}$. Thus, $\alpha^{-1}\mathcal{O}_{K,S}\cdot \mathfrak{a}_{x_{0},\ldots ,x_{d}}=\mathfrak{a}_{j}$. We conclude that $(x_{0}',\ldots ,x_{d}'):=(\alpha^{-1}x_{0},\ldots ,\alpha^{-1}x_{d})$ are coordinates of ${\bf x}$ satisfying $\alpha^{-1}x_{i}\in \mathcal{O}_{K,S}$ for all $i$ and $\mathfrak{a}_{x_{0}',\ldots ,x_{d}'}=\mathfrak{a}_{j}$. In particular,
$$H({\bf x})=\dfrac{1}{\mathcal{N}_{K}(\mathfrak{a}_{j})^{\frac{1}{d_{K}}}}\prod_{v\in S} \max_{i}||x_{i}'||_{v}.$$
Now, we use the following lemma, whose proof will be given after we finish the proof of Proposition \ref{serre}
\begin{lemma}
Let $K$ be a global field and $S$ a set of places which contains the infinite places when $K$ is a number field. There exists a constant $c_{K,S}'>1$, depending only on $K,S$, such that for every ${\bf x}=(x_{v})_{v\in S}\in (\mathbb{R}_{>0})^{|S|}$ there exist $\varepsilon\in \mathcal{O}_{K,S}^{\times}$ and $t>0$ verifying that
\begin{equation}
\left(c_{K,S}'\right)^{-1}\dfrac{x_{v}}{t}\leq ||\varepsilon||_{v}\leq c_{K,S}'\dfrac{x_{v}}{t} \text{ for all }v\in S.
\label{some units}
\end{equation}
\label{some units}
\end{lemma}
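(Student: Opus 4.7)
The plan is to translate the multiplicative statement into an additive one via logarithms and then apply Dirichlet's $S$-unit theorem: the image of $\mathcal{O}_{K,S}^{\times}$ under the logarithmic embedding is a full lattice in the trace-zero hyperplane, and the inequality \eqref{some units} amounts to saying that every point in $\mathbb{R}^{|S|}$ can be approximated, up to a scalar shift and a bounded sup-norm error, by such a lattice point.

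Concretely, given $(x_v)_{v\in S} \in (\mathbb{R}_{>0})^{|S|}$, I would first set
$$\tau := \frac{1}{|S|}\sum_{v\in S}\log x_v, \qquad t := \exp(\tau) = \Bigl(\prod_{v\in S} x_v\Bigr)^{1/|S|},$$
so that the vector $\mathbf{y} := (\log x_v - \tau)_{v\in S}$ lies in the hyperplane
$$H := \Bigl\{(z_v)_{v\in S} \in \mathbb{R}^{|S|} : \sum_{v\in S} z_v = 0\Bigr\}.$$
Then I would invoke Dirichlet's $S$-unit theorem---valid for number fields (see Chapter V of \cite{Lang}) and also for function fields (see Proposition~14.2 in \cite{Rosen})---which states that the logarithmic embedding
$$\ell : \mathcal{O}_{K,S}^{\times} \longrightarrow H, \qquad \varepsilon \longmapsto (\log\|\varepsilon\|_v)_{v\in S}$$
has image a lattice $L$ of full rank $|S|-1$ in $H$. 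The image indeed lies in $H$ because $\|\varepsilon\|_v = 1$ for $v \notin S$ when $\varepsilon \in \mathcal{O}_{K,S}^{\times}$, so by the product formula $\sum_{v\in S}\log\|\varepsilon\|_v = 0$.

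Next, I would fix once and for all a bounded fundamental domain $F$ for the lattice $L$ in $H$ (for instance, the parallelepiped spanned by any $\mathbb{Z}$-basis of $L$), and set
$$M := \sup\bigl\{\max_{v\in S}|z_v| : (z_v)_{v\in S}\in F\bigr\},$$
which is a finite quantity depending only on $K$ and $S$. Since $L$ is a full lattice in $H$ and $\mathbf{y} \in H$, there exists $\varepsilon \in \mathcal{O}_{K,S}^{\times}$ such that $\mathbf{y} - \ell(\varepsilon) \in F$, i.e.\
$$|\log x_v - \log t - \log\|\varepsilon\|_v| \leq M \qquad \text{for every } v \in S.$$
Exponentiating and setting $c'_{K,S} := e^M$ yields \eqref{some units}. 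The trivial corner case $|S|=1$ (possible only in the function field setting) is handled directly by taking $\varepsilon = 1$ and $t = x_v$, since then $H = \{0\}$.

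The only nontrivial input is the $S$-unit theorem itself, and the extraction of the constant $c'_{K,S}$ from it is elementary once a fundamental domain is fixed. The main subtlety I would watch for is ensuring the normalization of $\|\cdot\|_v$ used here (the one fixed in Section~\ref{section 2}, built so that the product formula holds) agrees with the normalization under which the classical $S$-unit theorem is stated in the standard references; but this is routine since any two such normalizations differ only by a constant rescaling of the logarithmic embedding, which merely changes $M$ (hence $c'_{K,S}$) but preserves the full-rank lattice structure in $H$.
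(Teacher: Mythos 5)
Your proof is correct and follows essentially the same route as the paper's: pass to logarithms, invoke Dirichlet's $S$-unit theorem to see the unit logs form a full-rank lattice in the trace-zero hyperplane, and extract the constant from a bounded fundamental domain (your explicit choice of $t$ as the geometric mean is equivalent to the paper's device of adjoining $(1)_{v\in S}$ to the lattice). The only thing the paper does beyond this is the extra work of making $c'_{K,S}$ \emph{effective} --- bounding the covolume of the unit lattice via the regulator/class number (or genus in the function-field case) and then bounding a fundamental system of units via Evertse-type and Dobrowolski-type lower bounds for heights --- which your argument does not attempt, though the lemma as literally stated does not demand it.
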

Let $c'_{K,S}$ be the constant in Lemma \ref{some units}. Fix any $v_{0}\in S$. Taking $(x_{v})_{v\in S}:=(\max_{i}||x_{i}'||_{v})_{v\in S}$, Lemma \ref{some units} implies that there exist $t>0$ and $\varepsilon\in \mathcal{O}_{K,S}^{\times}$ verifying 
\begin{align}\label{pase2}
\left(c_{K,S}'\right)^{-2}\max_{i}||\varepsilon^{-1}x_{i}||_{v_{0}}&\leq \left(c_{K,S}'\right)^{-1}t \leq \max_{i}||\varepsilon^{-1}x_{i}'||_{v}\\
&\leq c_{K,S}'t 
\leq \left(c_{K,S}'\right)^{2}\max_{i}||\varepsilon^{-1}x_{i}'||_{v_{0}},\nonumber
\end{align}
for all $v\in S$. In particular, if $h:=\max_{i}||\varepsilon^{-1}x_{i}'||_{v_{0}}$, from \eqref{pase2} and the fact that $\prod_{v\in S}||\varepsilon^{-1}||_{v}=1$ we deduce
\begin{equation}
H({\bf x})=\dfrac{1}{\mathcal{N}_{K}(\mathfrak{a}_{j})^{\frac{1}{d_{K}}}}\prod_{v\in S} \max_{i}||\varepsilon^{-1}x_{i}'||_{v}\gtrsim_{K,S} \prod_{v\in S}\max_{i}\{||\varepsilon^{-1}x_{i}'||_{v_{0}}\}\gtrsim_{K,S}h^{|S|}.
\label{pase3}
\end{equation}
Now, note that \eqref{pase2} and \eqref{pase3} imply that for all $v\in S$
$$||\varepsilon^{-1}x_{j}'||_{v}\leq \max_{w}\max_{i}||\varepsilon^{-1}x_{i}'||_{w}\lesssim_{K,S} h\lesssim_{K,S}H({\bf x})^{\frac{1}{|S|}}.$$
Thus,
$$H(1:\varepsilon^{-1}x_{0}':\ldots :\varepsilon^{-1}x_{d}')\lesssim_{K,S} H({\bf x}).$$
Taking $y_{j}:=\varepsilon^{-1}x_{j}'$, we conclude that the coordinates $(y_{0},\ldots ,y_{d})$ satisfy the conclusion of Proposition \ref{serre}. 
\end{proof}
\begin{proof}[Proof of Lemma \ref{some units}]
Note that equation \eqref{some units} is equivalent to prove that there exist $t>0$ and an unit $\varepsilon$ such that
\begin{equation}
\log(t)+\log(||\varepsilon||_{v})=\log(h_{v})+O_{K}(1)\text{ for all }v\in S.
\label{classic problem}
\end{equation}
Denote $(1)_{v\in S}$ for the vector in $\mathbb{R}^{|S|}$ with coordinates all equal to $1$ and let $W$ be the $\mathbb{Z}$-module given by
$$W:=\left\langle \{(\log(||\varepsilon||_{v}))_{v\in S}:\varepsilon\in \mathcal{O}_{K,S}^{\times}\},(1)_{v\in S}\right\rangle_{\mathbb{Z}}\subseteq \mathbb{R}^{|S|}.$$
Denoting by $||\cdot ||$ the $l^{\infty}$-norm in $\mathbb{R}^{|S|}$, we see that in order to prove \eqref{classic problem} it is enough to find a positive constant $C_{W}$ such that for any ${\bf x}\in \mathbb{R}^{|S|}$ there exists ${\bf w}\in W$ satisfying
\begin{equation}
||{\bf x}-{\bf w}||\leq C_{W}.
\label{another equivalent}
\end{equation}
Moreover, we may take $c'_{K,S}=\exp(C_{W})$. In general, it is easy to see that if $W\subseteq \mathbb{R}^{|S|}$ is an additive subgroup satisfying that $\mathbb{R}^{|S|}/W$ is compact, and $\Omega$ is a bounded set containing a representative of each class of $\mathbb{R}^{|S|}/W$, then \eqref{another equivalent} is verified with $C_{W}=\sup_{{\bf y}\in \Omega}||{\bf y}||$. In our case, the subgroup
$$W_{1}:=\left\langle \{(\log(||\varepsilon||_{v})_{v\in S}):\varepsilon\in \mathcal{O}_{K,S}^{\times}\} \right\rangle\subseteq \mathbb{R}^{|S|}$$
defines a lattice of rank $|S|-1$ in the hyperplane $\{(x_{v})_{v\in S}\in \mathbb{R}^{|S|}:\sum_{v\in S}x_{v}=0\}$. Furthermore (see Proposition 5.4.7 (b) and Theorem 5.4.9 (b) in \cite{codes}) it holds that  the volume of $W_{1}$ satisfies the bound:
\begin{equation}
\det(W_{1})\leq \begin{cases}|S|^{\frac{1}{2}}R_{K}h_{K}\displaystyle\prod_{v\in M_{K,\text{fin}}\cap S}\log(\mathcal{N}(\mathfrak{p}_{v}))& \begin{array}{l}\text{if } K\text{ is a number field,}\end{array}\\ |S|^{\frac{1}{2}}\left( 1+q+\dfrac{|X(\mathbb{F}_{q})|-q-1}{g_{K}} \right)^{g_{K}}& \begin{array}{l}\text{if } K \text{is the function field of} \\ \text{a curve } X\text{ over }\mathbb{F}_{q}.\end{array}\end{cases}
\label{bound for the volume}
\end{equation}
In \eqref{bound for the volume}, as usually, $R_{K}$ and $h_{K}$ denote respectively the regulator and the class number of $K$, and $\mathfrak{p}_{v}$ is the prime corresponding to the finite place $v$. Meanwhile, when $K$ is a function field, $g_{K}$ denotes the genus of the curve $X$.

Using \eqref{bound for the volume}, it can be shown that there exists a fundamental system of units $\{\varepsilon_{1},\ldots ,\varepsilon_{|S|-1}\}$ such that
\begin{equation}
\displaystyle \prod_{i=1}^{|S|-1}\log(H(\varepsilon_{i}))\lesssim_{|S|,d_{K}}\det(W_{1}),
\label{Evertse}
\end{equation}
where the implicit constant is effective. A proof of \eqref{Evertse} can be found in Proposition 4.3.9(i) in \cite{Evertse} for number fields, but the same proof carries over to function fields. On the other hand, given  non-zero elements $z_{1},\ldots ,z_{m}\in \mathcal{O}_{K,S}$  which are multiplicatively independent, there exist well known effective lower bounds of the form $\prod_{i=1}^{m}\log (H(z_{i}))\gtrsim_{d_{K},|S|}1$. Indeed, for function fields this is obvious; for number fields, we can use Dobrowolski's theorem (see, for instance, Theorem 4.4.1. in \cite{Bombieri}) or stronger estimates, such as the one given in Corollary 3.1 in \cite{Loher}. Thus, from \eqref{Evertse} we deduce
\begin{equation}
\max_{1\leq i\leq |S|-1}\log(H(\varepsilon_{i}))\lesssim_{|S|,d_{K}}\det(W_{1}).
\label{Evertse2}
\end{equation}
From \eqref{Evertse2} we conclude that we may find a fundamental domain $\Omega_{W_{1}}\subseteq \mathbb{R}^{|S|}$ such that $\max_{{\bf y}\in \Omega_{W_{1}}}||{\bf y}||\lesssim_{|S|,d_{K}}\det(W_{1})$. Moreover, we deduce that there exists a fundamental domain $\Omega_{W}\subseteq \mathbb{R}^{|S|}$ with $\max_{{\bf y}\in \Omega_{W}}||{\bf y}||\lesssim_{|S|,d_{K}}\det(W_{1})$ and thus, the constant $C_{W}$ is effective, and can be taken to be of the form $O_{|S|,d_{K}}(\det(W_{1}))$. Thus we may take $C'_{K,S}=\exp(C_{W})$.
\end{proof}

Now we prove Proposition \ref{S-integers bound} for function fields. Our argument essentially follows the number field case as in Theorem 5.2 in \cite{Lang}, replacing the argument of geometry of numbers by estimates of the dimension of some spaces of divisors. 

\begin{proof}[Proof of Proposition \ref{S-integers bound} for function fields]
Assume that $K$ is a  function field with field of constants $\mathbb{F}_{q}$. We have a map 
\begin{multline*}\varphi:[N]_{\mathcal{O}_{K,S}}\to \{ D\in \text{Div}(K):D\geq 0,\deg(D)\leq \log_{q}(N) \}\\ \times\ \ \{D\in \text{Div}(K):D=\sum_{v\in S}a_{v}\cdot v,|a_{v}|\leq \log_{q}(N)\},
\end{multline*}
$$\varphi(x):=\left( \sum_{v\notin S}\text{ord}_{v}(x)\cdot v,\sum_{v\in S}\text{ord}_{v}(x)\cdot v\right).$$
Note that, modulo constants, the map $x\mapsto \text{div}(x)$ is injective. Thus, $\varphi$ has fibers with $q$ elements. It is clear that
\begin{equation}
\left|\left\{ D\in \text{Div}(K):D=\displaystyle \sum_{v\in S}a_{v}\cdot v:|a_{v}|\leq \log_{q}(N) \right\}\right|\leq \left(\log(N)\right)^{|S|}.
\label{trivial counting}
\end{equation}
Meanwhile, by Lemma 5.6 in \cite{Rosen}, the number of divisor clases of degree zero, $h:=h_{K}$, is finite. By Lemma 5.8 in \cite{Rosen}, for every integer $n$, there are $h$ divisor classes of degree $n$. Suppose that $n\geq 0$ and that $\{\overline{A}_{1},\ldots ,\overline{A}_{h}\}$ are the divisor classes of degree $n$. If $b_{n}$ is the number of effective divisors of degree $n$, then it holds that $b_{n}=\sum_{i=1}^{h}\frac{q^{l(\overline{A}_{i})}-1}{q-1}$, where $l(\overline{A}_{i})$ is the dimension of the Riemann-Roch space associated to $\overline{A}_{i}$. By Exercise 18 in \cite{Rosen}, $l(\overline{A}_{i})\leq \deg(\overline{A}_{i})+1=n+1$, hence
$$b_{n}=\sum_{i=1}^{h}\frac{q^{l(\overline{A}_{i})}-1}{q-1}\leq h\dfrac{q^{n+1}-1}{q-1}\leq 2hq^{n}.$$  
Thus:
\begin{align}\label{norm counting}
\left|\left \{ D\in \text{Div}(K):D\geq 0,\deg(D)\leq \log_{q}(N)\right \}\right|&=\displaystyle \sum_{i\leq \log_{q}(N)}b_{i}\\
&\leq 2h\sum_{i\leq \log_{q}(N)}q^{i}\leq 2hN.\nonumber
\end{align}
By \eqref{trivial counting} and \eqref{norm counting} we conclude
$$|[N]_{\mathcal{O}_{K,S}}|\leq 2qhN(\log(N))^{|S|}.$$
Note that by Proposition 5.11 in \cite{Rosen}, $h\leq (\sqrt{q}+1)^{2g}$, where $g$ is the genus of $K$. Thus, 
$$|[N]_{\mathcal{O}_{K,S}}|\lesssim_{q,g}N(\log(N))^{|S|}.$$
\end{proof}

{\bf Acknowledgments.}  We want to thank the referee for the many comments and suggestions. J. M. Menconi and  M. Paredes were supported in part by CONICET Doctoral Fellowships.  R. Sasyk was partially supported by the grant PIP-CONICET 11220130100073CO. Some of this work was carried out while the second author was a visitor at the Institut Henri Poincar\'e participating in the Trimester Program ``Reinventing Rational Points''. This visit was supported by a CIMPA-Carmin grant. He wishes to thank the organizers, the hospitality of the Institut and the financial support he received there.

\bibliography{paper1}
\bibliographystyle{alpha}

\end{document}